\documentclass[11pt]{article}

\usepackage[margin=1in]{geometry}
\usepackage{amsmath,amssymb,amsthm,dsfont}
\usepackage{enumitem}
\usepackage{xcolor}
\usepackage{graphicx}
\usepackage[normalem]{ulem}
\usepackage{comment}
\usepackage{url}
\definecolor{revisiongreen}{HTML}{2E7D32}

\newtheorem{theorem}{Theorem}[section]
\newtheorem{proposition}[theorem]{Proposition}
\newtheorem{lemma}[theorem]{Lemma}
\newtheorem{corollary}[theorem]{Corollary}

\theoremstyle{definition}
\newtheorem{definition}[theorem]{Definition}

\newtheorem{remark}[theorem]{Remark}
\newtheorem{ainotice}[theorem]{AI Notice}

\newcommand{\Cay}{\operatorname{Cay}}
\newcommand{\ev}{\operatorname{ev}}

\newcommand{\id}{\operatorname{id}}

\newcommand{\mathsout}[1]{\text{\sout{\(\displaystyle #1\)}}}

\title{Ordered frequency in relatively hyperbolic groups}
\author{Dominic Bair}
\date{19 October 2026}

\begin{document}

\maketitle

\begin{abstract}
	We study word frequencies along infinite strings whose finite prefixes lie in a regular language of geodesic representatives of group elements.
	Let \((G,\mathcal H)\) be a non-elementary relatively hyperbolic pair, and suppose that, over a finite symmetric generating set, there is a prefix-closed regular language containing exactly one shortest word representing each element of \(G\).
	Modeling this language using a finite-state Markov chain, we define a probability measure on the space of infinite strings whose finite prefixes are words belonging to this language.
	Each such string labels a geodesic ray emanating from the identity.
    We show there is a continuous surjection from this space onto the Bowditch boundary, and the pushforward of the probability measure under this map is a Patterson--Sullivan probability obtained as a weak limit of weighted counts of group elements.
	For a word \(w\), the ordered frequency of \(w\) along a given labeled ray is the limiting proportion of consecutive nonoverlapping tuples of length \(|w|\) that exactly spell \(w\).
	For almost every infinite string under the constructed measure, these frequencies exist simultaneously for all nonempty geodesic words, with no restriction on their lengths.
	Consequently, almost every boundary point with respect to this Patterson--Sullivan probability is the endpoint of at least one such ray along which all these frequencies exist.
	In particular, after possibly enlarging the generating set, these results apply when the peripheral subgroups are virtually abelian.
\end{abstract}

\section{Introduction}

How often does a given word appear in a typical infinite string read by a finite-state automaton?
For a prefix-closed regular language, the infinite strings we have in mind are those whose finite prefixes all belong to the language.
We study this question for regular languages that choose one shortest word representing each element of a relatively hyperbolic group.
For a word of length \(k\), we read the infinite string in successive \(k\)-tuples, starting with its first letter, and ask what proportion of these tuples spell the word.
For example, for a three-letter word we inspect letters 1--3, then 4--6, then 7--9, rather than every possible starting position.
The limiting proportion, when it exists, is the \emph{ordered frequency} of the word.
To say what \emph{typical} means, we construct a Markov probability measure on infinite paths in the automaton, which we will henceforth refer to as a \emph{geodesic combing}.
We show that ordered frequencies exist almost surely for this measure, and identify the distribution of the corresponding endpoints on the boundary of the group.

Our motivation comes from first-passage percolation (FPP), where edges are assigned independent, identically distributed passage times and the time needed to travel between two vertices is the infimum of the total passage times of paths joining them.
On \(\mathbb Z^d\), the existence of a directional time constant under suitable assumptions is one of the early results of the subject; see \cite[Section~1]{BasuMj2022} for an overview.
Basu and Mj prove a corresponding result for word-hyperbolic groups: expected passage times, divided by word distance along a geodesic ray, converge in almost every boundary direction with respect to Patterson--Sullivan measure \cite[Theorem~5.1]{BasuMj2022}.
Their proof uses ordered frequencies to average contributions from finite portions of a geodesic ray.
As they explain in \cite[Section~3.3]{BasuMj2022}, this notion modifies a counting function introduced by Rhemtulla \cite{Rhemtulla1968} and rediscovered by Brooks \cite{Brooks1981}.

Mahan Mj asked whether the time constant result could be extended beyond hyperbolic groups, and suggested considering relatively hyperbolic groups.
The case of virtually abelian peripheral subgroups is particularly appealing: it allows Euclidean geometry inside the peripheral subgroups while retaining hyperbolic geometry relative to them.
The lattice and hyperbolic results suggest asking whether a similar law of large numbers holds in this setting.
Ilya Gekhtman suggested the class with \emph{geodesically completable} peripheral subgroups studied in \cite{GekhtmanTaylorTiozzo2020}, which includes the virtually abelian.
Here we extend the ordered-frequency result to this setting.


\subsection*{Main results}

	We consider a non-elementary relatively hyperbolic pair \((G,\mathcal H)\), meaning that its Bowditch boundary is infinite.
	Fix a finite symmetric generating set \(S\) and a finite geodesic combing of \(\Cay(G,S)\), as in Definition~\ref{def:geodesic-combing}.
	Here the combing is a finite generator-labeled directed graph whose finite paths from its initial vertex give exactly one shortest representative for each group element.

	\par

	\noindent\textbf{A Markov description of the boundary measure.}
	Following Calegari--Fujiwara \cite{CalegariFujiwara2010}, we construct a based Markov probability measure \(\mu^0\) on infinite paths beginning at the combing's initial vertex (Lemma~\ref{lem:based-markov-measure}).
	Each such path labels a unique geodesic ray in \(\Cay(G,S)\) from the identity. We show there is a continuous surjective map sending these labeled paths to the Bowditch compactification. (Lemma~\ref{lem:compactified-bowditch-evaluation}).
	Theorem~\ref{thm:critical-counting-convergence} shows that the normalized weighted-counting measures in Definition~\ref{def:critical-counting-ps-measure} converge weakly to a Patterson--Sullivan probability \(\nu\).
	The pushforward of \(\mu^0\) under this endpoint map is exactly \(\nu\).
	\par

	\noindent\textbf{Ordered frequencies exist almost surely.}
	Theorem~\ref{thm:arbitrary-length-ordered-frequencies} gives a single set of full \(\mu^0\)-measure on which the ordered frequencies exist simultaneously for every nonempty geodesic word, of any finite length.
	For \(\nu\)-almost every boundary point, all these ordered frequencies exist along at least one combing ray ending there (Corollary~\ref{cor:arbitrary-length-boundary-frequencies}).
	Basu and Mj use a divisibility condition on word lengths to obtain mixing \cite[Lemma~3.20]{BasuMj2022}.
	We obtain existence for all word lengths using the finite-state Markov chain ergodic theorem, without assuming mixing.

\subsection*{Examples}

	A finitely generated group is \emph{geodesically completable} if every finite symmetric generating set can be enlarged to one admitting a geodesic biautomatic structure.
	Antol\'in and Ciobanu show that this property passes from peripheral subgroups to the relatively hyperbolic group \cite[Theorem~1.5]{AntolinCiobanu2016}, and the resulting structures supply the combings needed here \cite[Proposition~2.4]{GekhtmanTaylorTiozzo2020}.
	In particular, the results apply to non-elementary relatively hyperbolic groups with virtually abelian peripherals after enlarging the generating set if necessary \cite[Proposition~10.1]{AntolinCiobanu2016}.
	The example \(\mathbb Z*\mathbb Z^2\), is hyperbolic relative to \(\mathbb Z^2\) but not word-hyperbolic.
    This shows our results go beyond the hyperbolic setting.
	Appendix~\ref{app:worked-examples} works out its geodesic combing, Perron--Frobenius data, and ordered frequencies explicitly.

\subsection*{Organization of the paper}
The rest of the paper is organized as follows.
Section~\ref{sec:geodesic-combings} introduces geodesic combings and the peripheral hypotheses that guarantee their existence in the relatively hyperbolic setting.
Section~\ref{sec:pf-markov} then constructs the Markov measures following Calegari--Fujiwara \cite{CalegariFujiwara2010}, using Yang's growth theorem \cite{Yang2022} to verify the required growth of paths.
We include the language and Markov-chain background for readers coming from geometry, and recall the geometric definitions for readers coming from those subjects.
We also give the measure construction explicitly, since the choice of initial state and normalization matter in the boundary comparison.

Section~\ref{sec:markov-ps-comparison} shows the existence of a continuous surjection between the space of paths in the geodesic combing and the Bowditch compactification, and proves the measure identity by lifting weighted counting measures to finite paths.
Section~\ref{sec:frequency} proves existence of ordered frequencies, first following the period-restricted argument of Basu and Mj, and then removing the restriction using the ergodic theorem.
Readers interested mainly in word frequencies can follow this Markov-chain argument after Section~\ref{sec:geodesic-combings}; Section~\ref{sec:markov-ps-comparison} is needed to interpret the conclusion on the Bowditch boundary.
Appendix~\ref{app:worked-examples} supplies an explicit combing, growth calculations, and ordered frequencies for \(\mathbb Z*\mathbb Z^2\).

\subsection*{Acknowledgments}
The author thanks Ilya Gekhtman and Yago Antolin for helpful correspondence concerning geodesic combings and geodesic biautomatic structures in relatively hyperbolic groups, Mahan Mj for helpful discussions about ordered frequency and for suggesting the relatively hyperbolic setting, and Yulan Qing and Milap Rajgor for discussing many aspects of this project throughout its development.

\subsection*{Use of AI}
We acknowledge the use of OpenAI's Codex (primarily using the model GPT 5.6 Sol as well as GPT 6 Astra), a generative artificial intelligence tool, during the preparation of this manuscript to assist with organizing the exposition, revising prose, drafting \LaTeX, proof writing, discussing mathematical arguments, and numerical simulation.
We review and revise all AI-generated suggestions and assume full responsibility for the final manuscript.
We attempt to be as transparent as possible and we discuss how AI is used throughout the manuscript. 

\section{Geodesic combings in relatively hyperbolic groups}\label{sec:geodesic-combings}

The goal of this section is to define geodesic combings and identify a class of relatively hyperbolic groups that admit them.
We then use the growth properties of nonelementary relatively hyperbolic groups to show that every such geodesic combing is almost semisimple in the sense of Definition~\ref{def:almost-semisimple-automaton}.
Gekhtman--Taylor--Tiozzo treat this almost semisimplicity as a consequence of pure exponential growth and the defining bijection of a geodesic combing \cite[discussion following Theorem~9.2]{GekhtmanTaylorTiozzo2020}.
For completeness, Lemma~\ref{lem:combing-almost-semisimple} makes this deduction explicit, including the passage from pure exponential growth of balls to pure exponential growth of spheres (Lemma~\ref{lem:pure-sphere-growth}).

Once almost semisimplicity has been established, we largely follow the Perron--Frobenius and Markov-chain construction of Calegari--Fujiwara \cite{CalegariFujiwara2010}.
They first construct a stationary probability distribution on the vertices of an almost semisimple graph structure and then use the transition probabilities to obtain a shift-invariant Markov measure on the space of infinite paths.
We also construct a \emph{based Markov measure} \(\mu^0\) on the space \(\mathcal P_\infty^0\) of infinite paths beginning at the initial vertex \(v_0\).
Although some of the relevant arguments could be cited directly from \cite{CalegariFujiwara2010}, we include the details needed for our normalization and for the distinction between the stationary and based measures.
In particular, Lemma~\ref{lem:based-markov-measure} gives an explicit formula for the measure of every based cylinder.

Together, Lemmas~\ref{lem:combing-almost-semisimple} and \ref{lem:based-markov-measure} produce a canonical probability measure on the infinite based paths in a geodesic combing of a nonelementary relatively hyperbolic group.
In the next section, we extend evaluation continuously from finite based paths to their compactification and show that the pushforward of \(\mu^0\) is a Patterson--Sullivan measure.

\subsection{Geometry review and conventions}

We begin by fixing how words describe routes in a Cayley graph.
The finite directed graph introduced next serves a different purpose: it records which spellings we choose.
Keeping these two graphs separate will help us pass from shortest representatives to statistics of paths.
\par

Throughout this manuscript, all groups are assumed to be countable and finitely generated, every generating set is assumed to be finite and symmetric, and no generating set contains the identity.
For a generating set \(S=S^{-1}\), let \(\Cay(G,S)\) denote the Cayley graph with every edge assigned length one, let \(d_S\) denote its \emph{edge} or \emph{word metric}, and let \(|g|_S=d_S(\id,g)\).
\emph{We emphasize that we are considering \(\Cay(G,S)\) as an undirected graph.}
All geodesics in the Cayley graph are assumed to be parametrized by arc length, thus travel at unit speed.
A \emph{geodesic ray} is an isometric embedding \(\gamma:[0,\infty)\to \Cay(G,S)\).



\subsection{Regular languages, graph structures, and geodesic combings}
We now review some basic facts about regular languages, automatic groups, and graph structures.
We refer the reader to \cite{EpsteinEtAl1992} and \cite{HoltReesRover2017} for comprehensive background information on the subject.


\begin{definition} 
    An \emph{alphabet} is a finite set of elements called \emph{letters}.
    A \emph{word} over an alphabet is a finite string of ordered letters.
    A \emph{language} is a collection of words. Let \(S:=\{s_1,s_2,\dots s_m\}\) be an alphabet.
    We use the notation \(\ell(w)\) to denote the length of the word \(w\in S^*\).
    Let \(S^*\) denote the free monoid of all possible words over \(S\), including the empty word.
    Let \(\mathcal{L} \subseteq S^*\) denote a language over \(S\).
    \(\mathcal{L}\) is said to be \emph{prefix-closed} if for any \(w=uv \in \mathcal{L}\), where \(u,v,w \in S^*\), we have the prefix \(u \in \mathcal{L}\). 
\end{definition}

\begin{definition}\label{def:automaton} Let \(S\) be an alphabet.
    An \emph{\(S\)-labeled graph structure} (or simply graph structure) is a finite directed (multi)graph with a distinguished initial vertex \(v_0\) with no incoming edges such that every vertex is accessible from \(v_0\) and distinct edges leaving the same vertex have distinct labels from \(S\).
    An \(S\)-labeled graph structure \(\mathcal G\) is said to \emph{accept} a word \(w \in S^*\) if there exists a direct path in \(\mathcal G\) starting at \(v_0\) such that reading the labels in order along the path exactly spells \(w\).
    A language \(\mathcal L \subseteq S^*\) is called \emph{regular} if there exists graph structure that accepts exactly every word \(w\in \mathcal L\); such graph structure is said to \emph{parametrize} \(\mathcal{L}\).
	The underlying directed graph of \(\mathcal G\) is also referred to as a \emph{topological Markov chain} and its vertices are referred to as \emph{states}.
\end{definition}

In general, there are infinitely many graph structures that parameterize the same language, and depending on the application, the choice may not be important.
However, we are interested in statistics concerning the edge labels of a given path in the graph structure. 
The definition of a graph structure allows for \emph{parallel edges}; i.e. edges that share the same head and tail vertices.
Allowing parallel edges creates ambiguity between vertex paths and labeled paths.
Indeed, if two edges have the same head and tail vertices, then a sequence of vertices may not uniquely determine a labeled edge path.
The following lemma shows that this ambiguity can be removed by choosing a new graph structure which accepts exactly the same language, but has no parallel edges.
After making this replacement, every labeled edge path is uniquely determined by its sequence of vertices.

\begin{lemma}\label{lem:parallel-free}
    Let \(\mathcal G\) be an \(S\)-labeled graph structure with initial vertex \(v_0\).
    Then there exists an \(S\)-labeled graph structure \(\mathcal G'\) such that \(\mathcal L_{\mathcal G'}=\mathcal L_{\mathcal G}\) and \(\mathcal G'\) has no parallel edges.
\end{lemma}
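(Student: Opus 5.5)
The plan is to remove parallel edges by subdividing the states according to the label of the edge used to enter them. Define \(\mathcal G'\) as follows. Its vertex set is \(\{v_0'\}\cup\{(v,s): v \text{ a vertex of } \mathcal G,\ s\in S,\ v \text{ has an incoming edge labeled } s\}\). For every edge \(e\) of \(\mathcal G\) from \(u\) to \(v\) with label \(s\), add edges labeled \(s\) to \((v,s)\):
\begin{itemize}[label={}]
\item one edge from \(v_0'\), if \(u=v_0\);
\item one edge from each vertex \((u,t)\) of \(\mathcal G'\), if \(u\neq v_0\).
\end{itemize}
Finally, delete any vertices not accessible from \(v_0'\) (there may be none). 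Because \(v_0\) has no incoming edges in \(\mathcal G\), no vertex \((v_0,s)\) exists, and \(v_0'\) has no incoming edges.

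The first step is to check that \(\mathcal G'\) is an \(S\)-labeled graph structure with no parallel edges. Consider two edges with a common tail \(x\) and a common head \((v,s)\). Both carry the label \(s\), since the head records the label. The graph-structure condition applied to \(\mathcal G'\) itself therefore forces them to coincide. To get that condition I will verify determinism: distinct edges out of \(x\) have distinct labels. Write \(u(x)\) for the underlying \(\mathcal G\)-vertex of \(x\), with \(u(v_0')=v_0\). Edges of \(\mathcal G'\) out of \(x\) correspond bijectively to edges of \(\mathcal G\) out of \(u(x)\), and they keep their labels. Since the labels out of \(u(x)\) are distinct, so are the labels out of \(x\). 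The same bijection shows at once that there are no parallel edges: distinct edges out of \(x\) have distinct labels, hence distinct heads. Accessibility holds by the pruning step.

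The second step is to show equality of languages via a label-preserving bijection between based paths. Map a path \(e_1\cdots e_n\) in \(\mathcal G\) from \(v_0\) to the path in \(\mathcal G'\) from \(v_0'\) that passes through \((\mathrm{head}(e_i),\mathrm{label}(e_i))\). I will prove by induction on \(n\) that this is well defined. The inductive point is that after reading \(e_1,\dots,e_i\) we sit at a vertex whose first coordinate is \(\mathrm{head}(e_i)\), which is exactly the tail of \(e_{i+1}\), so the required edge exists. This also shows that all the vertices visited are accessible and therefore survive the pruning. Conversely, projecting each edge of \(\mathcal G'\) to the \(\mathcal G\)-edge it came from gives an inverse. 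Both maps preserve labels, so \(\mathcal L_{\mathcal G'}=\mathcal L_{\mathcal G}\).

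I expect the only real obstacle to be bookkeeping. Specifically, I must ensure that the initial vertex is treated separately, so that it still has no incoming edges. I must also confirm that pruning inaccessible vertices does not alter any accepted path, which holds because every accepted path visits only accessible vertices.
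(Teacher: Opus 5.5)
Your proof is correct and is essentially the paper's argument. The paper uses the line-graph construction: one state \(q_e\) for each edge \(e\) of \(\mathcal G\), and an edge \(q_e\to q_f\) carrying the label of \(f\) whenever the head of \(e\) is the tail of \(f\). Your states \((v,s)\) are exactly the images of the \(q_e\) under \(e\mapsto(\mathrm{head}(e),\mathrm{label}(e))\), and the checks (determinism inherited from \(\mathcal G\), \(v_0'\) having no incoming edges, and a label-preserving bijection of based paths) are the same. The paper gets ``no parallel edges'' directly, since each edge is indexed by its ordered pair \((e,f)\); you instead need determinism of \(\mathcal G\) to exclude them, in exchange for a possibly smaller automaton. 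You should also note that \(\mathcal G'\) is finite (it has at most \(1+|V(\mathcal G)|\,|S|\) vertices), since the definition of a graph structure requires it.
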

\begin{proof}[Proof of Lemma~\ref{lem:parallel-free}]\label{app:parallel-free}
    For a directed edge \(e\in E(\mathcal G)\), let \(t(e)\) and \(h(e)\) denote its tail and head vertex, respectively.
    For each edge \(e\in E(\mathcal G)\), introduce a vertex \(q_e\), and introduce one additional vertex \(v_0'\).
    Define
    \[
    V(\mathcal G')=\{v_0'\}\sqcup\{q_e:e\in E(\mathcal G)\},
    \]
    where \(v_0'\) is the distinguished initial vertex.
    For each edge \(e\in E(\mathcal G)\) with \(t(e)=v_0\), add one directed edge
    \[
    v_0'\longrightarrow q_e
    \]
    carrying the same label as \(e\).
    For each ordered pair of edges \(e,f\in E(\mathcal G)\) satisfying \(h(e)=t(f)\), add one directed edge
    \[
    q_e\longrightarrow q_f
    \]
    carrying the same label as \(f\).
    Observe that \(\mathcal G\) is finite, so is \(\mathcal G'\) with this construction.
    
    Let \(p\) be a path in \(\mathcal G\) beginning at \(v_0\), let \(e_1,\ldots,e_n\) be its successive edges, and let \(s_i\in S\) be the label of \(e_i\).
    Writing \(v_{i_0}=v_0\) and \(v_{i_j}=h(e_j)\) for \(1\leq j\leq n\), we may display \(p\) as
    \[
    p:\quad v_{i_0}\xrightarrow{s_1}v_{i_1}\xrightarrow{s_2}v_{i_2}\longrightarrow\cdots\xrightarrow{s_n}v_{i_n}.
    \]
    Associate to \(p\) the path \(p'\) displayed as
    \[
    p':\quad v_0'\xrightarrow{s_1}q_{e_1}\xrightarrow{s_2}q_{e_2}\longrightarrow\cdots\xrightarrow{s_n}q_{e_n}
    \]
    in \(\mathcal G'\).
    Conversely, every path in \(\mathcal G'\) beginning at \(v_0'\) has this form and determines a unique path in \(\mathcal G\) beginning at \(v_0\).
    Together with the correspondence between the two empty paths, this gives a length-preserving bijection between the based paths in \(\mathcal G\) and those in \(\mathcal G'\).
    By construction, corresponding paths have the same label word, and therefore
    \( \mathcal L_{\mathcal G'}=\mathcal L_{\mathcal G}.\)
    
    Every vertex \(q_e\) is accessible from \(v_0'\), because every \(t(e)\) is accessible from \(v_0\) in \(\mathcal G\).
    Moreover, distinct edges leaving \(v_0'\) have distinct labels because they correspond to distinct edges leaving \(v_0\), and distinct edges leaving \(q_e\) have distinct labels because they correspond to distinct edges leaving \(h(e)\).
    
    Finally, there is exactly one edge from \(v_0'\) to \(q_e\) whenever \(t(e)=v_0\), and there is at most one edge from \(q_e\) to \(q_f\), determined by the ordered pair \((e,f)\).
    Therefore, \(\mathcal G'\) has no parallel edges.
\end{proof}

\begin{ainotice}
    Lemma \ref{lem:parallel-free} was suggested and proven by AI. 
    The necessity of the result is mostly a consequence of our conventions. 
    We could have specified that we are considering only graph structures with no parallel edges, but the authors of \cite{CalegariFujiwara2010} and \cite{GekhtmanTaylorTiozzo2020} allow for the possibility of parallel edges, and we attempt to follow their conventions.
    The authors of \cite{BasuMj2022} implicitly assume there are no parallel edges by their construction of the adjacency matrix.
\end{ainotice}

In view of Lemma~\ref{lem:parallel-free}, we henceforth assume without loss of generality that every graph structure has no parallel edges.
Therefore, every labeled path in \(\mathcal G\) is uniquely determined by its sequence of vertices.
    
If \(S\) is a finite generating set of a group \(G\), then there exists a natural evaluation map \(\ev: S^*\to G\) that sends a word \(w\in S^*\) to the group element \(g = \ev(w)\) it represents.
The empty word is assumed to be sent to the identity \(\id \in G\).  
Let \(w=s_1\cdots s_n\in S^*\).
The word \(w\) determines an edge path \(\gamma_w:[0,n]\to\Cay(G,S)\) beginning at \(\id\), defined by \(\gamma_w(j)=\ev(s_1\cdots s_j)\) for \(0\leq j\leq n\), where \(\gamma_w(0)=\id\).
Thus, \(\gamma_w\) is parametrized to have unit speed on \([0,n]\).

\begin{definition}\label{def:paths_and_languages}
    Suppose that \(S\) is a finite symmetric generating set of a group \(G\), and let \(\mathcal G\) be an \(S\)-labeled graph structure with initial vertex \(v_0\).
    Let \(\mathcal P^0\) be the set of finite directed paths in \(\mathcal G\) beginning at \(v_0\), which we call \emph{based paths}, and let \(\mathcal P_n^0\subseteq\mathcal P^0\) be the set of based paths of length \(n\).
    Reading the edge labels along based paths and concatenating the labels on the right determines a \emph{regular prefix-closed language} \(\mathcal L\subseteq S^*\).
    Since distinct edges leaving the same vertex have distinct labels, every word in \(\mathcal L\) determines a unique based path, and hence there is a natural bijection \(\mathcal P^0\longleftrightarrow\mathcal L\).
    The evaluation map \(\ev:S^*\to G\) therefore induces an evaluation map \( \ev:\mathcal P^0\longrightarrow G\)
    sending each based path to the group element represented by its label word.
    The path of length zero consisting only of \(v_0\) is sent to \(\id\).
\end{definition}

\begin{definition}\label{def:geodesic-combing}
    We call a graph structure \emph{surjective} (resp. \emph{injective}) if \(\ev:\mathcal P^0\to G\) is surjective (resp. injective).
    A graph structure is called \emph{bijective} if it is both  surjective and injective.
    A graph structure is called \emph{geodesic} if it is surjective, and for each path \(y\in \mathcal{P}^0\), \(|y| = |\ev(y)|_S\).
    Finally, a graph structure is called a \emph{geodesic combing} (or simply combing) if it is bijective and geodesic.
    The accepted language of a combing is called a \emph{geodesic normal form language}.
\end{definition}

\begin{remark}
    There seems to be little consensus on what should be called a geodesic combing.
    Some definitions are equivalent, but not all.
    See \cite{Rees1998} for some discussion.
    We choose to follow closely to the conventions of \cite{GekhtmanTaylorTiozzo2020}.
\end{remark}

\subsection{Geodesic combings and relatively hyperbolic groups}


The definitions above specify the kind of language we want: one shortest representative for each group element, recognized by a finite graph.
We now ask when relatively hyperbolic groups admit such a language.
After recalling relative hyperbolicity, we give a sufficient condition on the peripheral subgroups, allowing the generating set to be enlarged.


\begin{definition}\cite[Definition 2.2]{KimOhTranchida2022}
    Let \(G\) be a finitely generated group with finite symmetric generating set \(S\) and \(\mathcal H\) a finite collection of proper finitely generated infinite subgroups of \(G\). 
    The \emph{coned--off} Cayley graph, \(\widehat{\Cay}(G,S,\mathcal H)\) is defined as follows: starting with the original Cayley graph \(\Cay(G,S)\), for each left coset \(gH\) with \(g\in G\) and \(H\in \mathcal H\), add a new vertex \(v_{gH}\) called a \emph{cone vertex} to \(\Cay(G,S)\) and connect \(v_{gH}\) by an edge to each vertex in \(gH\). 
    We then say that \emph{(\(G\) is hyperbolic relative to \(\mathcal H\)} if \(\widehat{\Cay}(G,S,\mathcal H)\) is hyperbolic and \emph{fine}; that is, for every \(n\in \mathbb N\), each edge \(e\in \widehat{\Cay}(G,S,\mathcal H)\) is contained in at most finitely many simple cycles of length \(n\).
    We call \((G,\mathcal H)\) a \emph{relatively hyperbolic pair} or a \emph{relatively hyperbolic group}.
\end{definition}

There are a myriad of equivalent definitions one could use for relatively hyperbolic groups.
We choose these definitions as they seem to be the most easily applicable for our purposes.
For further reading on relatively hyperbolic groups and their properties, see \cite{Bowditch2012RelativelyHyperbolic, Farb98, Gerasimov2012FloydMaps, GrovesManning2008, Gromov1987HyperbolicGroups, Hruska2010}.
We now describe the class of relatively hyperbolic groups that we will study.

\begin{definition}
    Let \(G\) be a group with finite generating set \(S\).
    A \emph{geodesic biautomatic structure} for \((G,S)\) is a regular language \(\mathcal L\subseteq S^*\) that evaluates surjectively to \(G\), consists of geodesic words, and satisfies the uniform synchronous fellow-traveler property for translated representative paths whose corresponding initial points and corresponding terminal points are distance at most one apart.
	Synchronous fellow traveling means that the paths remain within one uniform distance at equal times, with each path held at its terminal point after it ends; see \cite[Definition~7.3]{AntolinCiobanu2016}.
\end{definition}

\begin{definition}\label{def:geodesically-completable}
    A finitely generated group \(G\) is \emph{geodesically completable} if every finite generating set \(S\) of \(G\) is contained in a finite generating set \(S'\) for which there is a geodesic biautomatic structure for \((G,S')\).
\end{definition}


\begin{theorem}\cite[Theorem 1.5]{AntolinCiobanu2016}\label{thm:ac-geodesic-completability}
    Let \((G,\mathcal H)\) be a relatively hyperbolic pair.
    If every \(H\in\mathcal H\) is geodesically completable, then \(G\) is geodesically completable.
\end{theorem}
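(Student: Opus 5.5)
The proof follows the strategy of Antolín--Ciobanu: build geodesic normal forms for \(G\) by combining hyperbolic behaviour relative to the peripheral cosets with the geodesic biautomatic structures of the peripherals. Let \(S\) be any finite generating set of \(G\). For each \(H\in\mathcal H\), choose a finite set \(Y_H\) generating \(H\). Using geodesic completability of \(H\), enlarge \(Y_H\) to \(Y_H'\) carrying a geodesic biautomatic structure \(\mathcal L_H\). Set \(S_1=S\cup\bigcup_H Y_H'\). The first step is a further finite enlargement \(S'\supseteq S_1\), adding all elements of \(G\) of \(S_1\)-length at most a constant \(C\). The constant \(C\) is chosen from the relative thin-triangle and bounded coset penetration constants, so that \(\Cay(G,S')\) has two properties. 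First, every \(S'\)-geodesic spends its peripheral portions inside cosets \(gH\) along \(Y_H'\)-geodesics. Second, the restriction of \(d_{S'}\) to each \(H\) agrees with the word metric coming from the (enlarged) \(Y_H'\). Crucially, the enlargement must be done so that each peripheral still admits a geodesic biautomatic structure over its new generating set \(S'\cap H\). This is exactly why the hypothesis is stated for \emph{every} generating set: we re-apply geodesic completability to \(S'\cap H\), and iterate finitely many times until the process stabilizes.

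Next, define the language \(\mathcal L\subseteq (S')^*\). Take the geodesic words \(w=u_0h_1u_1h_2\cdots\) where each \(h_i\) is a maximal peripheral subword lying in some \(\mathcal L_{H}\). We require the decomposition to be chosen lexicographically minimal among geodesics, using the relative geodesic in the coned-off graph \(\widehat{\Cay}(G,S,\mathcal H)\). Regularity follows from two facts. The set of \(S'\)-geodesics is recognized by a finite automaton that tracks ``cone types'' relative to the peripherals. This is the relatively hyperbolic analogue of Cannon's finiteness of cone types, and it uses fineness together with bounded coset penetration. Combining this automaton with the automata for the \(\mathcal L_H\) via products and intersections then gives a finite automaton for \(\mathcal L\). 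Surjectivity holds because every element has some geodesic, and that geodesic can be normalized piece by piece.

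The main obstacle is the synchronous fellow-traveler property for representatives \(u,v\in\mathcal L\) whose endpoints are at distance at most one. The plan has three parts. First, compare the coned-off projections of \(u\) and \(v\). These are relative quasigeodesics with close endpoints, so by relative hyperbolicity they fellow-travel in \(\widehat{\Cay}\) and penetrate the same cosets \(gH\), up to a bounded number of exceptions. By bounded coset penetration, their entry and exit points in each shared coset are uniformly close in \(d_{S'}\). Second, inside each shared coset the two peripheral subwords are \(\mathcal L_H\)-words with close endpoints, so the biautomaticity of \(\mathcal L_H\) gives synchronous fellow traveling there. Outside the cosets, the hyperbolic parts fellow-travel by thinness of relative triangles. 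Third, concatenate the pieces, keeping track of time synchronization. This works because all the pieces are geodesic, so arrival times at corresponding coset entry points differ by a bounded amount, and the errors do not accumulate along the word. I expect this synchronization bookkeeping to be the most delicate part; I would handle it by an induction over the sequence of penetrated cosets, using the uniform constant from the second part at each coset.
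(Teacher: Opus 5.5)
The paper does not prove this statement; it quotes it from Antol\'in--Ciobanu, so the comparison has to be with their argument. Your fellow-traveler outline is in the right spirit: matching deep coset penetrations by bounded coset penetration, using biautomaticity of \(\mathcal L_H\) inside cosets, and synchronizing, which is easy for geodesics because arrival time equals distance from the start. However, two steps fail as written.

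\textbf{The choice of generating set is circular.} The constant \(C\) and the properties you want are all attached to the generating set being built. These properties are: peripheral portions of geodesics are peripheral geodesics, \(d_{S'}|_H=d_{S'\cap H}\), and thin relative triangles and coset penetration with fixed constants. You then replace \(S'\cap H\) by a completion, which changes the generating set. Nothing you state shows the properties survive this. If they must be restored by another \(C\)-enlargement followed by another completion, there is no reason for the process to stop, so ``iterate until the process stabilizes'' is unsupported.

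The missing idea, which is the heart of Antol\'in--Ciobanu's argument, is uniformity over all parabolic enlargements. One finds a finite set \(\mathcal F\subseteq\bigcup_{H\in\mathcal H}H\) such that the needed control holds, with uniform constants, for \emph{every} finite \(Y\) with \(S\cup\mathcal F\subseteq Y\subseteq S\cup\bigcup_{H\in\mathcal H}H\). This is possible because a parabolic letter only adds edges between points of one coset. Such points are already at distance at most \(2\) in \(\widehat{\Cay}(G,S,\mathcal H)\), so the relative geometry and its constants are governed by \(S\) alone.

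With this in hand, no iteration is needed. Enlarge once to \(Y_0\) by \(\mathcal F\), finite generating sets of the peripherals, and the finite intersections \(H\cap H'\) for distinct \(H,H'\in\mathcal H\). Complete each \(Y_0\cap H\) inside \(H\) to a set \(Y_H\) carrying a geodesic biautomatic structure, and put \(Y=Y_0\cup\bigcup_H Y_H\). Then \(Y\cap H=Y_H\) exactly, so no stray letters from other completions spoil the structure on \(H\), and \(Y\) still lies in the admissible range.

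\textbf{The regularity step fails.} You assert that the language of all \(S'\)-geodesics is regular by finiteness of relative cone types. But your own second property makes \(H\) isometrically embedded. Then intersecting the geodesic language of \((G,S')\) with the regular language \((S'\cap H)^*\) gives exactly the geodesic language of \((H,S'\cap H)\). Your claim would therefore force the full geodesic language of every peripheral to be regular.

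The hypothesis provides only some regular geodesic sublanguage \(\mathcal L_H\). Full geodesic languages can fail to be regular even in virtually abelian groups (Cannon's example). Fineness and bounded coset penetration control cone types only modulo the peripherals. So your product and intersection construction has no automaton behind it. The same goes for the lexicographic-minimality condition, which quantifies over all geodesics.

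A correct recognizer must feed only the peripheral subwords to the automata for the \(\mathcal L_H\), and certify geodesicity of the whole word by a bounded finite-state test. Justifying such a test rests on the uniform fellow-traveler control from the previous point, not on a cone-type count for all geodesics.
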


In view of this theorem, let us call a finite generating set \(S\) \emph{admissible} if there is a geodesic biautomatic structure for \((G,S)\).
Combining this definition with the above theorem, we immediately come to the following conclusion.

\begin{proposition}
    Let \((G,\mathcal H)\) be a relatively hyperbolic pair such that every \(H\in\mathcal H\) is geodesically completable.
    Then every finite generating set \(S\) of \(G\) is contained in an admissible finite generating set \(S'\).
\end{proposition}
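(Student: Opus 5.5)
The statement follows by unwinding the definitions in combination with Theorem~\ref{thm:ac-geodesic-completability}. The plan has three steps.

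First, we apply Theorem~\ref{thm:ac-geodesic-completability}. Since \((G,\mathcal H)\) is a relatively hyperbolic pair and every \(H\in\mathcal H\) is geodesically completable, the theorem gives that \(G\) itself is geodesically completable.

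Second, we use Definition~\ref{def:geodesically-completable}. Let \(S\) be any finite generating set of \(G\). By our standing conventions \(S\) is symmetric and does not contain the identity. By the definition, there is a finite generating set \(S'\supseteq S\) together with a geodesic biautomatic structure for \((G,S')\). By the definition of admissibility, this says exactly that \(S'\) is admissible, which is the assertion of the proposition.

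The only point that needs care, and the one I expect to be the (mild) obstacle, is compatibility with the standing conventions. Every generating set in the paper is assumed finite, symmetric, and free of the identity, so we must check that \(S'\) can be taken with these properties. If the \(S'\) produced by \cite{AntolinCiobanu2016} is not symmetric, I would first check whether their Definition~7.3 already assumes symmetric generating sets; I expect it does, in which case there is nothing to prove. Otherwise I would note that their completion procedure can be run starting from a symmetric set and yields a symmetric set. The identity can be removed from \(S'\) without affecting geodesic words, since a geodesic word never uses the letter \(\id\); the language therefore still consists of geodesics, still surjects onto \(G\), and keeps its fellow-traveler constants. With these checks in place, the proposition is an immediate restatement of Theorem~\ref{thm:ac-geodesic-completability} in the terminology of admissible generating sets.
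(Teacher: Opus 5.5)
Your argument is correct and is exactly the paper's: apply Theorem~\ref{thm:ac-geodesic-completability} to conclude that \(G\) is geodesically completable, then read off admissibility of the enlarged set \(S'\) directly from the definitions. On your symmetry side remark: the paper states that \cite{AntolinCiobanu2016} do \emph{not} assume symmetric generating sets. It handles symmetry by a standing assumption, with an unproved appeal to \cite[Theorem~7.6]{AntolinCiobanu2016}, rather than by proof, so your expectation about their Definition~7.3 is likely mistaken and your fallback about their completion procedure is unverified, though neither is needed for the proposition as the paper defines admissibility.
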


As it turns out, by \cite[Theorem 5.2.7]{HoltReesRover2017}, if there exists a geodesic biautomatic structure for \((G,S)\), then there exists a geodesic combing \(\mathcal G\) of \(\Cay(G,S)\). 
Thus, we arrive at the following conclusion.

\begin{proposition}\cite[Proposition 2.4]{GekhtmanTaylorTiozzo2020}
    Let \((G,\mathcal H)\) be a relatively hyperbolic pair such that every \(H\in\mathcal H\) is geodesically completable.
    Then there exists some admissible generating set \(S\) of \(G\) such that there is a geodesic biautomatic structure for \((G,S)\).
    Moreover, there exists a geodesic combing \(\mathcal G\) of \(\Cay(G,S)\).
\end{proposition}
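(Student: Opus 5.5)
The plan is to chain together the three results already recorded above. There is no new geometric input; the work is bookkeeping about generating sets.

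\textbf{Step 1: an admissible generating set.}
Since \(G\) is finitely generated, fix any finite symmetric generating set \(S_0\) of \(G\) not containing \(\id\). Every \(H\in\mathcal H\) is geodesically completable, so Theorem~\ref{thm:ac-geodesic-completability} shows that \(G\) is geodesically completable. Equivalently, by the preceding proposition, \(S_0\) is contained in an admissible finite generating set \(S\). Unwinding Definition~\ref{def:geodesically-completable}, admissibility of \(S\) means exactly that a geodesic biautomatic structure \(\mathcal L\subseteq S^*\) for \((G,S)\) exists. This gives the first assertion.

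\textbf{Step 2: the paper's conventions.}
I would check that \(S\) can be taken symmetric and free of \(\id\). If the enlargement produced by Antol\'in--Ciobanu is not symmetric, replace \(S\) by \(S\cup S^{-1}\). Since geodesic completability applies to every finite generating set, one can rerun Step~1 starting from the symmetrized set if needed. Discarding \(\id\) changes neither the word metric nor geodesic words, since \(\id\) never occurs in a geodesic word.

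\textbf{Step 3: from a biautomatic structure to a combing.}
Apply \cite[Theorem 5.2.7]{HoltReesRover2017} to \(\mathcal L\). The idea is to intersect \(\mathcal L\) with the shortlex-least representatives among the words of \(\mathcal L\). The fellow-traveler property makes the equality relation on \(\mathcal L\) synchronously regular, so the resulting language \(\mathcal L'\) is still regular. Because it consists of geodesics that are shortlex-minimal, it is also prefix-closed. \(\mathcal L'\) contains exactly one geodesic word per element of \(G\), so its accepting graph structure is bijective and geodesic. Lemma~\ref{lem:parallel-free} then lets us assume there are no parallel edges. Together these give a geodesic combing in the sense of Definition~\ref{def:geodesic-combing}.

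\textbf{Expected obstacle.}
The main obstacle is matching conventions, not any new argument. Definition~\ref{def:automaton} requires that every vertex be accessible from \(v_0\) and that \(v_0\) have no incoming edges. To arrange this, I would prune inaccessible and dead states from the minimal automaton of \(\mathcal L'\); since \(\mathcal L'\) is prefix-closed, every accepted word uses only live states. I would then split off a fresh initial vertex, as in the construction in Lemma~\ref{lem:parallel-free}.
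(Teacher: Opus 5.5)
Your skeleton is the paper's: Theorem~\ref{thm:ac-geodesic-completability} produces an admissible \(S\), and \cite[Theorem 5.2.7]{HoltReesRover2017} turns the geodesic biautomatic structure into a combing. The paper offers nothing beyond these two citations. The problem is with the two arguments you add on top of them.

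Step~2 is circular. Replacing an admissible \(S\) by \(S\cup S^{-1}\) changes the word metric, so words of \(\mathcal L\) need no longer be geodesic and admissibility is lost. Rerunning completability then gives an admissible \(S''\supseteq S\cup S^{-1}\) that again need not be symmetric, and nothing makes this process stop. The paper does not prove symmetry either. The remark following the virtually abelian corollary adopts it as a standing assumption, attributing it to \cite[Theorem~7.6]{AntolinCiobanu2016} without proof. You should either state it as an assumption in the same way or use a form of the completion theorem in which the enlargement can be chosen symmetric.

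Your justification of prefix closure in Step~3 is also wrong. Being shortlex-least among the words of \(\mathcal L\) representing an element is not inherited by prefixes.
\begin{itemize}
\item The definition of a geodesic biautomatic structure does not require \(\mathcal L\) to be prefix-closed. So a prefix \(u\) of \(w=uv\in\mathcal L'\) need not lie in \(\mathcal L\) at all.
\item Even if \(u\in\mathcal L\), suppose \(u'\in\mathcal L\) is a shortlex-smaller representative of \(\ev(u)\). Then \(u'v\) is shortlex-smaller than \(w\) and represents \(\ev(w)\), but it need not belong to \(\mathcal L\), so minimality of \(w\) gives no contradiction.
\end{itemize}
The inheritance argument is valid only for shortlex-least words among all of \(S^*\), and that language need not be regular or contained in \(\mathcal L\).

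Prefix closure cannot be skipped, because every based path of a graph structure is accepted. Suppose \(\mathcal L'\) is not prefix-closed. If you keep its non-accepting live states, a prefix \(u\notin\mathcal L'\) evaluates to the same element as that element's \(\mathcal L'\)-representative, which breaks injectivity. If you delete those states, you lose elements, which breaks surjectivity. So prefix closure has to come from the cited theorem itself, as in the paper, not from this sketch.

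The rest of your bookkeeping is fine once you have a prefix-closed regular language with exactly one geodesic representative per element. That includes the uniqueness argument via synchronous regularity of equality, pruning dead states, splitting off a fresh initial vertex, and Lemma~\ref{lem:parallel-free}.
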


\cite[Corollary 1.9 and Proposition 10.1]{AntolinCiobanu2016} show that the relatively hyperbolic pair \((G,\mathcal H)\) is geodesically completable if the peripheral subgroups, \(\mathcal H\), are virtually abelian. So the following corollary is immediate.

\begin{corollary}
    Let \((G,\mathcal H)\) be a relatively hyperbolic pair such that every \(H\in \mathcal H\) is virtually abelian.
    Then there exists some admissible generating set \(S\) of \(G\) such that there is a geodesic biautomatic structure for \((G,S)\).
    Moreover, there exists a geodesic combing \(\mathcal G\) of \(\Cay(G,S)\).
\end{corollary}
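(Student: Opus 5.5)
The corollary follows by chaining three facts already recorded above; the only real content is checking that their hypotheses line up. First, I would invoke \cite[Corollary~1.9 and Proposition~10.1]{AntolinCiobanu2016}: every finitely generated virtually abelian group is geodesically completable in the sense of Definition~\ref{def:geodesically-completable}. Each peripheral \(H\in\mathcal H\) is finitely generated, since this is built into the definition of a relatively hyperbolic pair. So the hypothesis of Theorem~\ref{thm:ac-geodesic-completability} holds, and \(G\) is geodesically completable.

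Second, I would fix any finite symmetric generating set \(S_0\) of \(G\); one exists because \(G\) is finitely generated. Geodesic completability of \(G\) gives a finite generating set \(S\supseteq S_0\) that admits a geodesic biautomatic structure for \((G,S)\), so \(S\) is admissible. To respect the paper's conventions, I must make sure \(S\) is symmetric and omits the identity. Deleting the identity changes neither the word metric nor geodesicity, provided the language contains no occurrence of it; this holds because a geodesic word never uses the identity letter. For symmetry, I would either appeal to the convention in \cite{AntolinCiobanu2016} that generating sets are symmetric, or note that their enlargement can be taken symmetric. I expect this bookkeeping to be the only delicate point. If their construction does not directly yield a symmetric set, the fallback is to cite \cite[Proposition~2.4]{GekhtmanTaylorTiozzo2020}, which is stated for precisely this setting with symmetric generating sets.

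Third, applying \cite[Theorem~5.2.7]{HoltReesRover2017} to the geodesic biautomatic structure on \((G,S)\) produces a geodesic combing of \(\Cay(G,S)\). The idea behind that theorem is to take the shortlex-least representatives inside the geodesic regular language; this sublanguage is still regular and now picks out a unique geodesic for each element. It remains to match this with Definition~\ref{def:geodesic-combing}. The automaton accepting this language is a graph structure in the sense of Definition~\ref{def:automaton}, although I may have to trim inaccessible states or prefix-close the language, which shortlex-geodesic languages already are. Since the language contains exactly one geodesic for each element, evaluation is bijective and length-preserving. By Lemma~\ref{lem:parallel-free} we may further assume there are no parallel edges. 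Equivalently, this last step is exactly the preceding proposition, which cites \cite[Proposition~2.4]{GekhtmanTaylorTiozzo2020}.
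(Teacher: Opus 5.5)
Your proposal is correct and follows the paper's route: \cite[Corollary~1.9 and Proposition~10.1]{AntolinCiobanu2016} together with Theorem~\ref{thm:ac-geodesic-completability} make \(G\) geodesically completable, and the combing then comes from \cite[Theorem~5.2.7]{HoltReesRover2017} as packaged in \cite[Proposition~2.4]{GekhtmanTaylorTiozzo2020}, with symmetry of \(S\) resting on the same standing assumption the paper makes. One caution about your gloss of the Holt--Rees--R\"over step: the shortlex-least words \emph{inside} an arbitrary geodesic biautomatic language need not be prefix-closed (prefix-closure holds for the shortlex-least representatives among \emph{all} words), and taking the prefix closure would break uniqueness, so that step should rest on the citation rather than on your sketch.
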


This corollary shows that the class of relatively hyperbolic groups that admit geodesic combings, at the very least, contains some groups that are not hyperbolic.
Many \(\operatorname{CAT}(0)\) groups, mapping class groups, and Artin groups are hyperbolic relative to a family of virtually abelian subgroups.
To give a concrete example, consider the group \(G=\mathbb Z * \mathbb Z^2\).
This group is clearly not hyperbolic as it contains \(\mathbb Z^2\) as a subgroup, but \(G\) is hyperbolic relative to \(\mathbb Z^2\), which is (virtually) abelian.

\begin{remark}
    One obvious shortcoming of these results is that we only state the existence of admissible generating sets.
    The authors \cite{AntolinCiobanu2016} remark that these generating sets are, in fact, computable, but provide no algorithm for doing so.
    We refer the reader to \cite{AntolinCiobanu2016} who, in turn, refer to \cite{Dahmani08} for more information.
    Henceforth, we assume all generating sets are admissible, and thus admit a geodesic combing.
\end{remark}

\begin{remark}
    A second shortcoming of these results, at least for our purposes, is that the authors of \cite{AntolinCiobanu2016} do not assume generating sets to be symmetric.
    On the other hand, the authors of \cite{GekhtmanTaylorTiozzo2020} assume all generating sets are symmetric and that the above results hold under this assumption.
    Patterson--Sullivan measures are defined through a word metric on a finitely generated group.
    Without assuming a symmetric generating set, the induced Cayley graph becomes a directed graph, and the induced word ``metric'' may not be symmetric.
    After corresponding with Gekhtman and Antolin, we (meaning the author of this manuscript) conclude that \mbox{\cite[Theorem 7.6]{AntolinCiobanu2016}} subtlety implies that admissible generating sets can be assumed to be symmetric, but we do not provide a proof.
    Henceforth, we assume that all generating sets are symmetric and that the above results hold under this assumption.
	
\end{remark}

We now have examples of groups with the required combings.
The next task is to turn counts of finite paths into a probability law on infinite paths.
We first carry out this construction for a finite graph satisfying the growth condition called almost semisimplicity, and then verify that condition for our combings.


\section{Perron--Frobenius theory, almost semisimple graph structures, and Markov measures}\label{sec:pf-markov}

Let \(\mathcal G\) be a graph structure and let \(M\) be the adjacency matrix of \(\mathcal G\), where \(M_{ij} =1\) if there is a directed edge from \(v_i\) to \(v_j\) and 0 otherwise.
In general, \(M_{ij}\) is the number of directed edges from \(v_i\) to \(v_j\), but we are assuming we have chosen a parallel edge free graph structure due to Lemma \ref{lem:parallel-free}.
Let \(\mathbb R^{V(\mathcal G)}\) denote the real vector space with standard basis \(\{\mathbf e_i:v_i\in V(\mathcal G)\}\) indexed by the vertices of \(\mathcal G\).
We regard the adjacency matrix \(M\) as a linear operator on \(\mathbb R^{V(\mathcal G)}\).
Let
\[
	\mathbf 1:=\sum_{v_i\in V(\mathcal G)}\mathbf e_i
\]
denote the vector whose coordinates are all equal to \(1\).
Let \(\lambda\) denote the spectral radius of \(M\).
By Perron-Frobenius, \(\lambda\) is in fact the largest eigenvalue.

\begin{lemma}\cite[Lemma~4.2]{CalegariFujiwara2010}\label{lem:CF-4.2}
	For any vertices \(v_i,v_j\in V(\mathcal G)\), the number of directed paths in \(\mathcal G\) of length \(n\) from \(v_i\) to \(v_j\) is
	\[
		\mathbf e_i^TM^n\mathbf e_j=(M^n)_{ij}.
	\]
\end{lemma}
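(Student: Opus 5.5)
We argue by induction on \(n\). Since we have arranged, via Lemma~\ref{lem:parallel-free}, that \(\mathcal G\) has no parallel edges, \(M_{ij}\in\{0,1\}\) counts the directed edges from \(v_i\) to \(v_j\). The same argument would work verbatim with \(M_{ij}\) equal to the number of parallel edges.

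For the base case \(n=0\), the only path of length zero from \(v_i\) to \(v_j\) is the constant path, which exists exactly when \(i=j\). This matches \((M^0)_{ij}=\delta_{ij}\). The case \(n=1\) is simply the definition of the adjacency matrix. The identity \(\mathbf e_i^TM^n\mathbf e_j=(M^n)_{ij}\) is the standard fact that pairing a matrix with standard basis vectors extracts its \((i,j)\) entry, so only the counting statement needs proof.

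For the inductive step, assume the claim holds for \(n\). Every directed path of length \(n+1\) from \(v_i\) to \(v_j\) decomposes uniquely as a path of length \(n\) from \(v_i\) to some vertex \(v_k\), followed by a single edge from \(v_k\) to \(v_j\). Uniqueness holds because the path is determined by its ordered sequence of edges, so the first \(n\) edges and the last edge are determined. This gives a bijection between paths of length \(n+1\) from \(v_i\) to \(v_j\) and the disjoint union over \(k\) of the products (paths of length \(n\) from \(v_i\) to \(v_k\)) \(\times\) (edges from \(v_k\) to \(v_j\)). Counting both sides and applying the inductive hypothesis,
\[
\#\{\text{paths of length } n+1 \text{ from } v_i \text{ to } v_j\}=\sum_{k}(M^n)_{ik}M_{kj}=(M^{n+1})_{ij},
\]
which is the definition of matrix multiplication.

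No step is a genuine obstacle. The only point requiring care is the convention that paths are sequences of edges rather than vertices, so that the decomposition is a bijection. In our parallel-free setting the two notions coincide anyway.
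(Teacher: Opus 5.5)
Your induction is correct and complete. Decomposing a path uniquely by its last edge gives the count $\sum_k (M^n)_{ik}M_{kj}=(M^{n+1})_{ij}$, and your remark that the argument carries over verbatim with edge multiplicities, treating paths as edge sequences, is accurate. The paper gives no proof of its own and simply cites Calegari--Fujiwara \cite[Lemma~4.2]{CalegariFujiwara2010}; your argument is the standard one behind that citation.
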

\begin{proposition}\label{prop:number_of_paths}
	For every vertex \(v_i\), the number of directed paths in \(\mathcal G\) of length \(n\) beginning at \(v_i\) is
	\[
		\sum_j(M^n)_{ij}=\mathbf e_i^TM^n\mathbf 1.
	\]
\end{proposition}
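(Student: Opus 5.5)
The plan is to derive the proposition directly from Lemma~\ref{lem:CF-4.2} by decomposing the set of length-\(n\) paths starting at \(v_i\) according to their terminal vertex. Every directed path of length \(n\) in \(\mathcal G\) beginning at \(v_i\) ends at exactly one vertex \(v_j\in V(\mathcal G)\). So the set of such paths is the disjoint union, over \(j\), of the sets of length-\(n\) paths from \(v_i\) to \(v_j\). Since \(\mathcal G\) is finite, this is a finite disjoint union, and its cardinality is the sum of the cardinalities of the pieces.

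Applying Lemma~\ref{lem:CF-4.2} to each piece gives \((M^n)_{ij}\) paths from \(v_i\) to \(v_j\). Summing over \(j\) yields \(\sum_j (M^n)_{ij}\). It remains only to rewrite this as a bilinear expression. By definition \(\mathbf 1=\sum_j\mathbf e_j\), so by linearity
\[
	\mathbf e_i^TM^n\mathbf 1=\sum_j \mathbf e_i^TM^n\mathbf e_j=\sum_j (M^n)_{ij}.
\]

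There is no real obstacle. The only point needing care is the degenerate case \(n=0\): there \(M^0=I\), and there is exactly one path of length zero at \(v_i\), which agrees with \(\mathbf e_i^T\mathbf 1=1\), provided Lemma~\ref{lem:CF-4.2} is read to include \(n=0\). If one prefers not to rely on the lemma, the count of paths from \(v_i\) to \(v_j\) can be checked by a routine induction on \(n\), splitting off the last edge and using \((M^{n+1})_{ij}=\sum_k (M^n)_{ik}M_{kj}\). The absence of parallel edges (Lemma~\ref{lem:parallel-free}) ensures that \(M_{kj}\in\{0,1\}\) counts edges correctly, although the argument would work equally well with multiplicities.
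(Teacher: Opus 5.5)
Your proposal is correct and matches the paper's proof, which likewise applies Lemma~\ref{lem:CF-4.2} to each terminal vertex \(v_j\) and sums over \(j\). Your extra remarks on rewriting the sum as \(\mathbf e_i^TM^n\mathbf 1\) and on the case \(n=0\) are harmless details that the paper leaves implicit.
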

\begin{proof}
    By Lemma \ref{lem:CF-4.2}, the number of directed paths from \(v_i\) to each \(v_j\) is \((M^n)_{ij}\). Summing over all vertices \(v_j\) yields the desired result.
\end{proof}

Let \(\mathcal G\) be a graph structure and let \(\mathcal P\) denote the set of all finite directed paths in \(\mathcal G\), without a condition on the initial vertex, and let \(\mathcal P_n\) denote the paths of length \(n\).
Restriction to prefixes gives inverse systems whose limits \(\mathcal P_\infty^0\) and \(\mathcal P_\infty\) are the spaces of infinite paths beginning at \(v_0\) and at arbitrary vertices, respectively.
Both spaces carry the prefix topology, and a basic clopen set is the cylinder
\[
[y]:=\{\sigma\in\mathcal P_\infty^0:\text{\(\sigma\) begins with \(y\)}\} \quad \left(\text{resp. }[x]:=\{\sigma\in\mathcal P_\infty:\text{\(\sigma\) begins with \(x\)}\}\right)
\]
determined by a finite based path \(y\in\mathcal P^0\), (resp. finite path \(x\in \mathcal P\)).
The compactified based path space is the compact metrizable space \(\overline{\mathcal P}^{\,0}:=\mathcal P^0\sqcup\mathcal P_\infty^0\), where every finite path is isolated and a sequence converges to an infinite path precisely when its common prefix length tends to infinity.


\begin{definition}\label{def:almost-semisimple-automaton}
A graph structure is said to be \emph{almost semisimple} if every vertex is accessible from \(v_0\) and there are constants \(K\geq 1\) and \(\lambda>1\) such that
\[
K^{-1}\lambda^n\leq |\mathcal P_n^0|\leq K\lambda^n
\]
for every \(n\geq 1\).
\end{definition}

Let \(\mathbf e_0\) be the coordinate vector corresponding to \(v_0\).
The following formulation collects the Perron--Frobenius consequences of almost semisimplicity that will be used below.

\begin{lemma}[\cite{CalegariFujiwara2010}, Lemma 4.5, 4.6]\label{lem:cesaro-pf-projections}
Let \(\mathcal{G}\) be an almost semisimple graph structure. Then the limits
\[
r:=\lim_{n\to\infty}\frac{1}{n}\sum_{i=0}^{n}\lambda^{-i}M^i\mathbf 1
\qquad\text{and}\qquad
\ell:=\lim_{n\to\infty}\frac{1}{n}\sum_{i=0}^{n}\lambda^{-i}(M^T)^i\mathbf e_0
\]
exist, are nonzero and nonnegative, and satisfy
\[
Mr=\lambda r
\qquad\text{and}\qquad
M^T\ell=\lambda\ell.
\]
They are the projections of \(\mathbf 1\) and \(\mathbf e_0\) to the right and left \(\lambda\)-eigenspaces, respectively.
\end{lemma}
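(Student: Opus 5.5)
We will prove the lemma in three steps: bound the iterates \(\lambda^{-n}M^n\), pass to a Cesàro limit via a Jordan-block decomposition, and then check the eigen-equations, nonnegativity and nonvanishing.

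\textbf{Step 1: uniform bounds.} Since every vertex \(v_i\) is accessible from \(v_0\), choose a path of length \(m_i\) from \(v_0\) to \(v_i\). Prepending it to paths of length \(n\) starting at \(v_i\) is injective, because the graph has no parallel edges and the prefix is fixed. Combined with Proposition~\ref{prop:number_of_paths}, this gives
\[
\mathbf e_i^TM^n\mathbf 1\le |\mathcal P^0_{n+m_i}|\le K\lambda^{n+m_i}.
\]
Hence \(\|\lambda^{-n}M^n\|\) is bounded uniformly in \(n\), since the entries are nonnegative and \(\mathbf 1\) dominates every \(\mathbf e_j\).

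\textbf{Step 2: structure of the peripheral spectrum.} Uniform boundedness of \(\lambda^{-n}M^n\) forces every eigenvalue of \(M\) to have modulus at most \(\lambda\). It also forces every eigenvalue of modulus exactly \(\lambda\) to be semisimple, since a nontrivial Jordan block would produce growth of order \(n\lambda^n\). Moreover, \(\lambda\) equals the spectral radius of \(M\): the lower bound \(|\mathcal P^0_n|=\mathbf e_0^TM^n\mathbf 1\ge K^{-1}\lambda^n\) shows that the growth rate is attained.

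Decompose \(\mathbb C^{V(\mathcal G)}\) into generalized eigenspaces of \(A:=\lambda^{-1}M\). On blocks with eigenvalue of modulus less than \(1\), \(A^i\to 0\). On a peripheral eigenvalue \(\zeta\ne 1\) with \(|\zeta|=1\), \(A\) acts as the scalar \(\zeta\), so the Cesàro averages \(\frac1n\sum_{i\le n}\zeta^i\) tend to \(0\). On the eigenvalue \(1\), \(A\) acts as the identity. Therefore
\[
\frac1n\sum_{i=0}^n A^i\;\longrightarrow\; P,
\]
where \(P\) is the spectral projection onto \(\ker(M-\lambda I)\) along the remaining generalized eigenspaces. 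Thus \(r=P\mathbf 1\). Applying the same argument to \(M^T\), which has the same spectrum, the same semisimplicity and the transposed projection \(P^T\), gives \(\ell=P^T\mathbf e_0\). This identifies the two limits as the stated projections.

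\textbf{Step 3: eigen-equations, nonnegativity and nonvanishing.} Since \(MP=\lambda P\), we get \(Mr=\lambda r\), and similarly \(M^T\ell=\lambda\ell\). Each limit is a limit of nonnegative vectors, so it is nonnegative. For nonvanishing, pair the two averages against \(\mathbf e_0\) and \(\mathbf 1\):
\[
\mathbf e_0^Tr=\lim_{n\to\infty}\frac1n\sum_{i=0}^n\lambda^{-i}|\mathcal P^0_i|\ge K^{-1}>0,
\]
and likewise \(\ell^T\mathbf 1=\mathbf e_0^Tr>0\). Hence both \(r\) and \(\ell\) are nonzero.

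\textbf{Main obstacle.} The main obstacle is Step 2. The point to get right is that the bounds in almost semisimplicity control \(\lambda^{-n}M^n\) in every coordinate, not merely along \(\mathbf e_0\), and that this rules out peripheral Jordan blocks. Accessibility from \(v_0\) is exactly what transfers the bound at \(\mathbf e_0\) to all rows, and a bounded matrix power sequence cannot contain a nontrivial Jordan block at modulus \(\lambda\). The remaining steps follow by direct computation once this is in place.
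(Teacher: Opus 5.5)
Your proof is correct. It establishes every clause of the lemma: row-wise boundedness of \(\lambda^{-n}M^n\) (from accessibility and the upper growth bound), the exclusion of nontrivial Jordan blocks at modulus \(\lambda\), the identification of the growth constant with the spectral radius, Ces\`aro averaging to the spectral projection, and the positivity estimate \(\mathbf e_0^Tr=\ell^T\mathbf 1\geq K^{-1}\).

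The paper gives no proof of its own here and simply cites Calegari--Fujiwara (Lemmas 4.5--4.6). Your argument is the standard Jordan-decomposition and Ces\`aro-averaging argument behind that citation, written out in self-contained form.
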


Assume throughout the following construction that the graph structure is almost semisimple.
Define a matrix \(N\) by 
\begin{equation}\label{eq:state-transition-probability}
N_{ij}:=\frac{M_{ij}r_j}{\lambda r_i}
\end{equation}
whenever \(r_i>0\), otherwise define \(N_{ii} = 1\) and \(N_{ij} = 0\) for \(i\neq j\).
Define a probability measure \(\mu\) on the vertices of \(\mathcal G\) by
\[
    \mu_i=\frac{r_i\ell_i}{\sum_i r_i\ell_i}.
\]
\cite[Lemma 4.9]{CalegariFujiwara2010} shows that \(N\) is stochastic and preserves the vertex distribution \((\mu_i)\); moreover, \(N_{ij}\) is the probability of moving from \(v_i\) to \(v_j\).
For each directed path \(x=v_{i_0}v_{i_1}\cdots v_{i_n}\in\mathcal P_n\) in \(\mathcal{G}\), define the measure (which we also denote by \(\mu\))
\[
    \mu([x]):=\mu_{i_0}N_{i_0i_1}N_{i_1i_2}\cdots N_{i_{n-1}i_n},
\]
where \([x]:=\{\sigma\in\mathcal P_\infty:\text{\(\sigma\) begins with \(x\)}\}\).
As cylinders determine a clopen basis for the topology of \(\mathcal P\) these assignments imply \(\mu\) is a Borel probability measure on \(\mathcal P_\infty\).
Let \(\mathsf S:\mathcal P_\infty\to\mathcal P_\infty\) denote the shift map obtained by deleting the initial vertex of an infinite path.
Since the vertex distribution \((\mu_i)\) is stationary under \(N\), the Markov measure \(\mu\) is \(\mathsf S\)-invariant; equivalently,
\[
    \mu\bigl(\mathsf S^{-1}(A)\bigr)=\mu(A)
\]
for every Borel set \(A\subseteq\mathcal P_\infty\). We thus call \(\mu\) a \emph{stationary measure}.


The stationary law chooses a starting state according to its stationary distribution.
For rays issuing from the identity, we instead need every path to start at the automaton's initial vertex.
We keep the same transition probabilities and change only this choice of starting state.

For each finite based directed path \(y = v_0 v_{i_1}\dots v_{i_n} \in \mathcal P^0_n\) in \(\mathcal G\), assign to its cylinder the mass 
\[
    m([y]) = N_{0i_1}N_{i_1i_2}\cdots N_{i_{n-1}i_n}
\]

\begin{lemma}[Based Markov measure]\label{lem:based-markov-measure}
    The assignment \(m\) extends uniquely to a Borel probability measure \(\mu^0\) on \(\mathcal P_\infty^0\). 
    Moreover, for every \(y=v_0v_{i_1}\cdots v_{i_n}\in\mathcal P^0,\) we have
    \[
        \mu^0([y])=\frac{\lambda^{-|y|}r_{v_{i_n}}}{r_{v_0}}.
    \]
\end{lemma}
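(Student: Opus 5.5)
The plan is to prove the cylinder formula first, by telescoping the product of transition probabilities, and then to obtain \(\mu^0\) from the Kolmogorov (Carath\'eodory) extension theorem on the compact space \(\mathcal P_\infty^0\). The preliminary step is to check that \(r_{v_0}>0\), so that the right-hand side makes sense and row \(0\) of \(N\) is given by \eqref{eq:state-transition-probability} rather than by the degenerate convention. By Lemma~\ref{lem:cesaro-pf-projections} and Proposition~\ref{prop:number_of_paths},
\[
r_{v_0}=\lim_{n\to\infty}\frac1n\sum_{i=0}^n\lambda^{-i}\,\mathbf e_0^TM^i\mathbf 1=\lim_{n\to\infty}\frac1n\sum_{i=0}^n\lambda^{-i}|\mathcal P_i^0|.
\]
Almost semisimplicity (Definition~\ref{def:almost-semisimple-automaton}) bounds each term below by \(K^{-1}\), so \(r_{v_0}\geq K^{-1}>0\).

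Next I would verify the formula for \(y=v_0v_{i_1}\cdots v_{i_n}\) by induction along the path, distinguishing whether the path meets vertices with \(r=0\).
\begin{itemize}[leftmargin=*]
\item If every \(r_{v_{i_k}}>0\), each factor is \(N_{i_{k-1}i_k}=r_{i_k}/(\lambda r_{i_{k-1}})\), since \(M_{i_{k-1}i_k}=1\) (there are no parallel edges). The product then telescopes to \(\lambda^{-n}r_{v_{i_n}}/r_{v_0}\).
\item Otherwise, let \(k\) be the first index with \(r_{v_{i_k}}=0\). Then \(r_{v_{i_{k-1}}}>0\), so \(N_{i_{k-1}i_k}=0\) and the product vanishes.
\item In that case the formula also gives \(0\), because \(r_{v_{i_n}}=0\). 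Indeed, \(r\) is nonnegative with \(Mr=\lambda r\), so \(r_i=0\) forces \(r_j=0\) for every successor \(v_j\) of \(v_i\). Hence \(r\) vanishes at every vertex reachable from \(v_{i_k}\).
\end{itemize}

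For the extension I would check two things: the total mass and finite additivity on cylinders.
\begin{itemize}[leftmargin=*]
\item The empty based path gets mass \(m([v_0])=1\).
\item For finite additivity, the cylinder \([y]\) is the disjoint union of \([yv_j]\) over the edges \(v_{i_n}\to v_j\). Using the formula and \(Mr=\lambda r\),
\[
\sum_j\frac{\lambda^{-n-1}M_{i_nj}r_j}{r_{v_0}}=\frac{\lambda^{-n-1}(Mr)_{i_n}}{r_{v_0}}=\frac{\lambda^{-n}r_{v_{i_n}}}{r_{v_0}}.
\]
So \(m([y])=\sum_j m([yv_j])\); equivalently, this is stochasticity of \(N\) as in \cite[Lemma~4.9]{CalegariFujiwara2010}.
\end{itemize}

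Cylinders are clopen and compact, and form a semiring in which any two are either nested or disjoint. A clopen set that is a countable disjoint union of cylinders is therefore a finite union, so countable additivity reduces to the finite additivity above. Carath\'eodory's theorem then gives a unique Borel probability measure \(\mu^0\) agreeing with \(m\) on cylinders. Uniqueness holds because cylinders form a \(\pi\)-system generating the Borel \(\sigma\)-algebra of the metrizable space \(\mathcal P_\infty^0\).

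The main care point is the degenerate convention \(N_{ii}=1\) when \(r_i=0\). Such rows could in principle put mass on paths that do not exist in \(\mathcal G\) (for instance a self-loop absent from \(\mathcal G\)). This is harmless only because these vertices are never entered with positive mass from \(v_0\), which is exactly what the propagation of zeros of \(r\) along edges, shown in the second case above, guarantees.
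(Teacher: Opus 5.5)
Your proposal is correct and follows the paper's proof: you telescope the product of transition probabilities to get the closed formula, check one-edge consistency via \(Mr=\lambda r\), and then invoke an extension theorem (Carath\'eodory with compactness where the paper cites Kolmogorov, which amounts to the same mechanism here). Your extra checks, that \(r_{v_0}\geq K^{-1}>0\) and that zeros of \(r\) propagate along edges, fill in points the paper's telescoping computation leaves implicit: the nondegenerate formula for \(N\) applies at every step, and otherwise both the product and the closed formula vanish.
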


\begin{proof}
For the length-zero path, observe
\[
m([v_0]) =m(\mathcal P^0_\infty)=\lambda^0r_{v_0} / r_{v_0}=1.
\]
Now let \( y=v_0v_{i_1}\cdots v_{i_n}\in\mathcal P^0.\)
Using the definition of \(N\) from
\eqref{eq:state-transition-probability}, we obtain
\begin{align*}
m([y])
&=
N_{0i_1}N_{i_1i_2}\cdots N_{i_{n-1}i_n}\\
&=
\frac{M_{0i_1}r_{v_{i_1}}}{\lambda r_{v_0}}
\frac{M_{i_1i_2}r_{v_{i_2}}}{\lambda r_{v_{i_1}}}
\cdots
\frac{M_{i_{n-1}i_n}r_{v_{i_n}}}
     {\lambda r_{v_{i_{n-1}}}}\\
&=
\left(\prod_{k=1}^n M_{i_{k-1}i_k}\right)
\frac{r_{v_{i_n}}}{\lambda^n r_{v_0}}\\
&=
\frac{\lambda^{-|y|}r_{v_{i_n}}}{r_{v_0}}.
\end{align*}
The last equality holds because we assume the graph structure has no parallel edges, and hence \(M_{i_{k-1}i_k}=1\) for every transition occurring in \(y\).

It remains to verify that these cylinder masses extend to a Borel probability measure on \(\mathcal P^0_\infty\).
Let \(y\in\mathcal P_k^0\) terminate at \(v_i\).
The cylinder \([y]\) is the disjoint union of the cylinders corresponding to the one-edge extensions of \(y\):
\[
    [y]= \bigsqcup_{\substack{ z\in\mathcal P_{k+1}^0\\ z\text{ extends }y}} [z].
\]

    Moreover,
    \begin{align*}
    	\sum_{\substack{z\in\mathcal P_{k+1}^0\\z\text{ extends }y}}m([z])
    	&=\frac{\lambda^{-(k+1)}}{r_{v_0}}\sum_j M_{ij}r_{v_j}\\
    	&=\frac{\lambda^{-(k+1)}(Mr)_i}{r_{v_0}}\\
    	&=\frac{\lambda^{-k}r_{v_i}}{r_{v_0}}\\
    	&=m([y]).
    \end{align*}
    The third equality follows from \(Mr=\lambda r\), as established in Lemma~\ref{lem:cesaro-pf-projections}.
    Thus, when \([y]\) is decomposed into its pairwise disjoint one-edge subcylinders, the mass assigned to \([y]\) is the sum of the masses assigned to those subcylinders.
    Iterating this identity gives the corresponding additivity for the partition of \([y]\) into cylinders determined by extensions of any fixed length.
    
    Equivalently, the probability distributions on the finite path spaces \(\mathcal P_k^0\) defined by
    \[
    	p_k(y):=m([y])
    \]
    are compatible under the prefix maps \(\mathcal P_{k+1}^0\to\mathcal P_k^0\).
    The Kolmogorov extension theorem therefore gives a unique probability measure \(\mu^0\) on \(\mathcal P_\infty^0\) satisfying
    \[
    	\mu^0([y])=m([y])
    \]
    for every finite based path \(y\).
    Since the cylinders form a clopen basis for \(\mathcal P_\infty^0\), they generate its Borel \(\sigma\)-algebra, and hence \(\mu^0\) is uniquely determined as a Borel probability measure.
\end{proof}

We call \(\mu^0\) the \emph{based Markov measure}, since it is supported on the infinite paths beginning at \(v_0\).
We also regard \(\mu^0\) as a measure on \(\mathcal P_\infty\), assigning zero mass to paths not beginning at \(v_0\).
Since \(\mathcal G\) is a geodesic combing, \(v_0\) has no incoming edges; thus, \(\mathcal P^0_\infty\) and \(\mathsf S(\mathcal P^0_\infty)\) are disjoint, and so \(\mu^0\) is not shift-invariant.
Nevertheless, its Cesàro shift averages converge to the stationary Markov measure:
\[
	\lim_{n\to\infty}\frac{1}{n}\sum_{i=1}^n(\mathsf S^i)_*\mu^0 = \mu.
\]
See \cite[\S 4.3]{CalegariFujiwara2010}.

\subsection{Growth and almost semisimplicity}


The probability laws are now constructed, provided the graph has the required exponential path growth.
We return to the group to check this condition for geodesic combings.
Yang's theorem \cite[Theorem~1.9]{Yang2022} counts elements within a given distance; we need counts at an exact distance, because these correspond to based paths of a fixed length.


For \(n\geq 0\), let
\[
G_n:=\{g\in G:|g|_S=n\} \qquad\text{and}\qquad B_n:=\{g\in G:|g|_S\leq n\}.
\]

\begin{theorem}\cite[Theorem 1.9]{Yang2022}\label{thm:yang-pure-ball-growth}
Let \((G,\mathcal H)\) be any nonelementary relatively hyperbolic pair. Then there exists constants \(\lambda >1,~C\geq 1\) such that
\[
C^{-1}\lambda^ n\leq |B_n|\leq C\lambda^n
\]
for every \(n\geq 1\).
\end{theorem}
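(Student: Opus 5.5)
Since this is \cite[Theorem~1.9]{Yang2022}, I would organize the argument along the lines of Coornaert's proof for hyperbolic groups, adapting it to the relative setting. Set \(\omega:=\lim_n\frac1n\log|B_n|\) and \(\lambda:=e^{\omega}\).

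\textbf{Lower bound.} This half is formal. Since \(B_{m+n}\subseteq B_mB_n\), the sequence \(a_n:=\log|B_n|\) is subadditive. Fekete's lemma then gives \(\omega=\inf_n a_n/n\), so \(|B_n|\geq\lambda^n\) for every \(n\). Moreover \(\lambda>1\), because a non-elementary relatively hyperbolic group contains a free subgroup of rank two and so has exponential growth. The real content of the theorem is therefore the upper bound \(|B_n|\leq C\lambda^n\).

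\textbf{Upper bound.} I would use a Patterson--Sullivan measure at the critical exponent.

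\emph{Step 1.} Build a \(\omega\)-conformal density \(\nu\) on the Bowditch boundary. Take weak limits of
\[
\sum_g e^{-s|g|_S}\delta_g\Big/\sum_g e^{-s|g|_S}\qquad\text{as } s\downarrow\omega,
\]
using Patterson's slowly varying modification if the Poincaré series converges at \(\omega\). The limits are taken in the Bowditch compactification, which is metrizable via a Floyd metric; by Gerasimov's Floyd map the Floyd boundary surjects onto the Bowditch boundary.

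\emph{Step 2.} Prove the parabolic gap: every \(H\in\mathcal H\) has growth rate strictly less than \(\omega\). The plan is a growth-tightness argument. Inserting a fixed loxodromic element between consecutive peripheral pieces produces free-product-like words, so the orbit of \(G\) grows strictly faster than that of \(H\). One consequence is that \(\nu\) gives no mass to parabolic points.

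\emph{Step 3.} Prove a shadow lemma, \(\nu(\Pi_r(g))\asymp e^{-\omega|g|}\), but only for \emph{transitional} elements \(g\). These are elements whose geodesic from \(\id\) does not end with a long subpath fellow-travelling a peripheral coset. The upper bound on shadows follows from conformality. The lower bound uses the extension lemma: for any \(g\) there is an element \(f\) from a fixed finite set of loxodromics such that \(gf\) is transitional and \(|gf|\approx|g|+|f|\).

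\emph{Step 4.} Count. The shadows of transitional elements in an annulus \(A_n=B_n\setminus B_{n-\Delta}\) have uniformly bounded overlap. Hence the number of transitional elements in \(A_n\) is \(\lesssim e^{\omega n}\), since \(\nu\) has total mass one. The extension lemma gives a uniformly finite-to-one map from \(A_n\) to transitional elements of \(A_{n+c}\), so in fact \(|A_n|\lesssim e^{\omega n}\). Summing this geometric series over annuli bounds \(|B_n|\) by \(C\lambda^n\).

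\textbf{Main obstacle.} I expect the hardest step to be the shadow lemma for transitional elements together with the bounded-multiplicity covering. It fails for elements lying deep inside peripheral cosets, which is exactly where the Euclidean behaviour of the peripherals enters. The tools I would use are the parabolic gap of Step 2 and the relative thin-triangle property: geodesics from \(\id\) to points of a transitional shadow must pass near \(g\) itself, and not merely near the coset through \(g\).
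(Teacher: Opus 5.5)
The paper gives no proof of this statement. It is imported from \cite{Yang2022} and used only to obtain Lemma~\ref{lem:pure-sphere-growth}, so there is no internal argument to compare with. Your outline reconstructs the strategy of that source: a Patterson--Sullivan density on the Bowditch boundary, a shadow lemma at transition points, and the extension lemma. As a roadmap it is sound, though Steps~1--4 assert rather than prove the substantive facts. Your Fekete step is correct and does more than you use. Besides the lower bound with \(C=1\) and \(\lambda=e^{\omega}\), Abel summation gives \(\sum_{|g|_S\leq n}e^{-\omega|g|_S}\geq(1-e^{-\omega})\sum_{j<n}|B_j|e^{-\omega j}\geq(1-e^{-\omega})n\). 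So \(\zeta_G\) diverges at \(\omega\), and Patterson's modification in Step~1 is unnecessary.

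Step~2, as written, does not give a \emph{strict} gap. Let \(\Theta_H(s):=\sum_{h\in H}e^{-s|h|_S}\) with critical exponent \(\delta_H\), and choose \(f\) so that \(\langle H,f\rangle\cong H*\langle f\rangle\). Inserting \(f\) between nontrivial peripheral pieces then yields only \(\zeta_G(s)\geq\sum_{k\geq1}\bigl(e^{-s|f|_S}(\Theta_H(s)-1)\bigr)^k\). If \(\Theta_H\) converged at \(\delta_H\) with \(e^{-\delta_H|f|_S}(\Theta_H(\delta_H)-1)<1\), this would show nothing beyond the trivial \(\delta_G\geq\delta_H\). The missing input is that \(H\) has divergence type, and your own Fekete argument supplies it. Peripheral subgroups are quasiconvex in \(\Cay(G,S)\), so \(|H\cap B_{m+n}|\leq|H\cap B_{m+K}|\,|H\cap B_{n+K}|\), hence \(|H\cap B_n|\geq e^{(n-2K)\delta_H}\). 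The same Abel summation then gives \(\Theta_H(\delta_H)=\infty\) when \(\delta_H>0\); when \(\delta_H=0\), as for virtually abelian peripherals, the gap is immediate.

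In fact, for the word metric you can drop Step~2 and conformality from the upper bound altogether. The lower shadow bound needs only \(|gx|_S\leq|g|_S+|x|_S\). Let \(V\) be a cone at an \(f\) chosen by the extension lemma, with \(O\cap G\subseteq V\) for an open \(O\) meeting the boundary. The approximants then satisfy \(\mu_s(gV)\geq e^{-s|g|_S}\mu_s(V)\). Since the boundary limit points of \(gV\) lie in \(\Pi_r(g)\), the portmanteau theorem gives \(\nu(\Pi_r(g))\geq e^{-\omega|g|_S}\nu(O)\). Moreover \(\nu(O)>0\): the densities at different basepoints are mutually bounded, so \(\operatorname{supp}\nu\) is closed and \(G\)-invariant, hence equal to \(\partial_B(G,\mathcal H)\). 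Because Step~4 already maps the whole annulus finite-to-one to transitional elements, neither the parabolic gap nor the absence of parabolic atoms is used. The only genuinely relative ingredient is the bounded overlap of transitional shadows, which, as you anticipate, is where relative thin triangles enter.
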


This growth behavior is referred to as \emph{purely exponential growth}. Using this theorem we deduce that the growth of \(G_n\) must also be purely exponential via the following lemma.

\begin{lemma}\label{lem:pure-sphere-growth}
Let \((G,\mathcal H)\) be any nonelementary relatively hyperbolic pair. Then there exists constants \(\lambda >1,~K\geq 1\) such that
\[
K^{-1}\lambda^n\leq |G_n|\leq K\lambda^n
\]
for every \(n\geq 1\).
\end{lemma}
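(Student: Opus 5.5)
The upper bound is immediate from Theorem~\ref{thm:yang-pure-ball-growth}: since \(G_n\subseteq B_n\), we have \(|G_n|\leq |B_n|\leq C\lambda^n\). So the work is in the lower bound. The plan is to show that spheres capture a uniform fraction of balls, using only the purely exponential growth of balls and nothing special about relative hyperbolicity.

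The first idea is to write \(|G_n|=|B_n|-|B_{n-1}|\geq C^{-1}\lambda^n-C\lambda^{n-1}\). This is useless unless \(\lambda>C^2\), so a direct difference of consecutive balls will not suffice. Instead, fix an integer \(m\geq 1\) and compare \(B_n\) with \(B_{n-m}\):
\[
|B_n\setminus B_{n-m}|\geq C^{-1}\lambda^n-C\lambda^{n-m}=\lambda^n\bigl(C^{-1}-C\lambda^{-m}\bigr).
\]
Choose \(m\) with \(C\lambda^{-m}\leq \tfrac12 C^{-1}\), which is possible since \(\lambda>1\). Then
\[
\sum_{j=n-m+1}^{n}|G_j|\geq \tfrac12 C^{-1}\lambda^n
\]
for all \(n\geq m\). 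Hence some \(j\in(n-m,n]\) satisfies \(|G_j|\geq (2mC)^{-1}\lambda^n\).

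The main obstacle is transferring this bound from some sphere in a window of width \(m\) to the specific sphere \(G_n\). For this I will use the sub-multiplicativity of sphere sizes, \(|G_{a+b}|\leq |G_a||G_b|\), which holds because any element of length \(a+b\) factors as a product of an element of length \(a\) and one of length \(b\) by splitting a geodesic word. Apply the window estimate at level \(n+m\) rather than \(n\). This gives some \(j\in(n,n+m]\) with \(|G_j|\geq(2mC)^{-1}\lambda^{n+m}\). Writing \(j=n+k\) with \(1\leq k\leq m\), we get
\[
|G_j|\leq |G_n|\,|G_k|\leq |G_n|\,|S|^m,
\]
and therefore
\[
|G_n|\geq \frac{\lambda^{m}}{2mC|S|^m}\,\lambda^n.
\]
This holds for every \(n\geq 1\), since the window argument at level \(n+m\geq m\) is always available.

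Finally, set \(K:=\max\bigl\{C,\;2mC|S|^m\lambda^{-m}\bigr\}\), which gives the two-sided bound with the same \(\lambda\) as in Theorem~\ref{thm:yang-pure-ball-growth}. The only nonroutine point is the factorization of sphere elements, which uses that the metric is the word metric for a symmetric generating set.
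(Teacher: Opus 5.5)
Your proof is correct and follows essentially the same route as the paper. Both apply a pigeonhole argument to the annulus \(B_{n+m}\setminus B_n\) to find a large sphere \(G_{n+k}\) with \(1\leq k\leq m\), then transfer the bound to \(G_n\) via \(|G_{n+k}|\leq |S|^m|G_n|\); your submultiplicativity \(|G_{n+k}|\leq|G_n|\,|G_k|\) is the paper's prefix-map fiber count in different words.
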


\begin{proof}
By Theorem~\ref{thm:yang-pure-ball-growth},
The upper bound is immediate:
\[
|G_n|\leq |B_n|\leq C\lambda^n.
\]
Choose \(N\geq 1\) sufficiently large that
\[
C^{-1}\lambda^N-C>0.
\]
Since the annulus \(B_{n+N}\setminus B_n\) is the disjoint union of the spheres \(G_{n+1},\ldots,G_{n+N}\), we have
\begin{align*}
\sum_{i=1}^N|G_{n+i}|&=|B_{n+N}|-|B_n|,\\
&\geq C^{-1}\lambda^{n+N}-C\lambda^n,\\
&=(C^{-1}\lambda^N-C)\lambda^n.
\end{align*}
It follows that there is some \(j\in\{1,\ldots,N\}\) such that
\begin{equation}\label{eq:large-nearby-sphere}
|G_{n+j}|\geq\frac{C^{-1}\lambda^N-C}{N}\lambda^n.
\end{equation}
For each \(h\in G_{n+j}\), choose a geodesic word \(w_h\in S^*\) representing \(h\), and define the prefix map \(p:G_{n+j}\to G_n\) by \(p(h)=\ev((w_h)_n)\), where \((w_h)_n\) is the prefix of \(w_h\) of length \(n\).
Every prefix of a geodesic word is geodesic, so \(p(h)\in G_n\).
Fix \(g\in G_n\).
If \(h\in p^{-1}(g)\), then \(w_h=(w_h)_n u\) for some \(u\in S^j\), and hence \(h=g\ev(u)\).
There are at most \(|S|^j\) possible words \(u\in S^j\), so
\[
|p^{-1}(g)|\leq |S|^j\leq |S|^N.
\]
Consequently,
\[
|G_{n+j}|\leq |S|^N|G_n|.
\]
Combining this estimate with \eqref{eq:large-nearby-sphere} gives
\[
|G_n|\geq\frac{C^{-1}\lambda^N-C}{N|S|^N}\lambda^n.
\]

Letting
\[
K:=\max\left\{C,\frac{N|S|^N}{C^{-1}\lambda^N-C}\right\}
\]
gives
\[
K^{-1}\lambda^n\leq |G_n|\leq K\lambda^n
\]
for every \(n\geq 1\).

\end{proof}

Since the combing is bijective and geodesic, evaluation restricts to a bijection \(\ev:\mathcal P_n^0\longrightarrow G_n.\)
Combining this with Proposition \ref{prop:number_of_paths} gives,
\begin{equation}\label{eq:based-path-count}
|\mathcal P_n^0|=|G_n|=\sum_j(M^n)_{0j} = \mathbf{e_0}^TM^n\mathbf{1},
\end{equation}
where \(\mathbf e_0\) is the coordinate vector corresponding to \(v_0\).

\begin{lemma}\label{lem:combing-almost-semisimple}
	Let \((G,\mathcal H)\) be a non-elementary relatively hyperbolic pair with admissible finite symmetric generating set \(S\) and let \(\mathcal G\) be a geodesic combing of \(\Cay(G,S)\).
	Then \(\mathcal G\) is almost semisimple.
\end{lemma}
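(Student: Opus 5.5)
The plan is to check the two requirements of Definition~\ref{def:almost-semisimple-automaton} directly; the argument is a bookkeeping combination of results already established.

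\emph{Accessibility.} Every vertex of \(\mathcal G\) is accessible from \(v_0\). This is built into Definition~\ref{def:automaton}, since a graph structure is required to have every vertex accessible from its initial vertex. It also survives the replacement by a parallel-edge-free structure in Lemma~\ref{lem:parallel-free}, whose proof checks that each \(q_e\) is accessible from \(v_0'\). So there is nothing new to prove here.

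\emph{Growth.} Because \(\mathcal G\) is a geodesic combing, evaluation restricts to a bijection \(\ev:\mathcal P_n^0\to G_n\) for each \(n\). Injectivity comes from bijectivity of the combing. Surjectivity onto \(G_n\) holds because each \(g\) with \(|g|_S=n\) is \(\ev(y)\) for some based path \(y\), and geodesicity forces \(|y|=n\); conversely, geodesicity puts every length-\(n\) based path into \(G_n\). This is exactly \eqref{eq:based-path-count}, giving \(|\mathcal P_n^0|=|G_n|\). Now apply Lemma~\ref{lem:pure-sphere-growth}, which relies on the non-elementary hypothesis through Yang's Theorem~\ref{thm:yang-pure-ball-growth}. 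It yields constants \(\lambda>1\) and \(K\ge1\) with \(K^{-1}\lambda^n\le |G_n|\le K\lambda^n\) for all \(n\ge1\). Substituting gives the required two-sided bound for \(|\mathcal P_n^0|\).

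\emph{Consistency of \(\lambda\).} The only point needing care is that the \(\lambda\) in Definition~\ref{def:almost-semisimple-automaton} should agree with the spectral radius of \(M\) used in Lemma~\ref{lem:cesaro-pf-projections}. I would note that \(|\mathcal P_n^0|=\mathbf e_0^TM^n\mathbf 1\), so
\[
\lim_{n\to\infty}|\mathcal P_n^0|^{1/n}=\lambda
\]
by the growth bounds. This exponential rate is at most the spectral radius \(\rho(M)\). It is also at least \(\rho(M)\): every vertex is reachable from \(v_0\) in some bounded number of steps, so the number of paths from \(v_0\) dominates, up to a constant and a shift, the number of paths issuing from any single vertex, and the total path count grows at rate \(\rho(M)\). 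Hence the growth constant coincides with the Perron--Frobenius eigenvalue. I expect this identification to be the main obstacle, although it is routine.
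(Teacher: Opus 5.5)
Your proposal is correct and matches the paper's proof: accessibility comes directly from Definition~\ref{def:automaton}, and the two-sided bound follows from the identity \(|\mathcal P_n^0|=|G_n|\) in \eqref{eq:based-path-count} combined with Lemma~\ref{lem:pure-sphere-growth}. Your extra check that the growth constant equals the spectral radius of \(M\) is not required by the lemma as stated. It is nevertheless correct, and it makes explicit an identification that the paper only asserts later, just before \eqref{eq:critical-normalizing-constant}.
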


\begin{proof}
Every vertex of \(\mathcal G\) is accessible from \(v_0\) by Definition~\ref{def:automaton}.
The lemma then follows by combining \eqref{eq:based-path-count} and Lemma~\ref{lem:pure-sphere-growth}.
\end{proof}

\section{Extended evaluation and Patterson--Sullivan measure}\label{sec:markov-ps-comparison}
Fix a non-elementary relatively hyperbolic pair \((G,\mathcal H)\), an admissible finite symmetric generating set \(S\), and a geodesic combing \(\mathcal G\) of \(\Cay(G,S)\).
Recall that the pair \((G,\mathcal H)\) admits a natural geometric boundary called the \emph{Bowditch boundary}, and taking the disjoint union of \(G\) with its Bowditch boundary induces a compactification called the \emph{Bowditch compactification}, which we will define shortly.
By Lemmas~\ref{lem:combing-almost-semisimple} and \ref{lem:based-markov-measure} from the preceding section, there is a based Markov measure \(\mu^0\) on the space \(\mathcal P_\infty^0\) of infinite paths in \(\mathcal G\) beginning at the initial vertex.

The goal of this section is to relate \(\mu^0\) to a natural geometric measure on the Bowditch compactification.
In Lemma~\ref{lem:compactified-bowditch-evaluation}, we show that the evaluation map from finite based paths in \(\mathcal G\) to elements of \(G\) extends to a continuous surjection from the compactified based path space \(\overline{\mathcal P}^{\,0}\) onto the Bowditch compactification.
Then, in Theorem~\ref{thm:critical-counting-convergence}, we identify the pushforward of \(\mu^0\) under this extended evaluation map with a Patterson--Sullivan measure on the Bowditch compactification.
This identification provides the measure-theoretic bridge to the ordered frequency results of the following section.

\subsection{Extended evaluation}

\begin{definition}
    Let \((G,\mathcal H)\) be a relatively hyperbolic pair with finite symmetric generating set \(S\). The \emph{Bowditch boundary} of \((G,\mathcal H)\) is given by
	\[
		\partial_B(G,\mathcal H):=\partial\widehat{\Cay}(G,S,\mathcal H)\sqcup\{v_{gH}:g\in G,\ H\in\mathcal H,\ |H|=\infty\},
	\]
    where \(\partial\widehat{\Cay}(G,S,\mathcal H)\) is the Gromov boundary of the coned--off Cayley graph of \((G,\mathcal H)\). The \emph{Bowditch compactification} of \((G,\mathcal H)\) is given by
    \[
        \overline G_B:= G \sqcup \partial_B(G,\mathcal H).
    \]
	These unions carry the subspace topology induced by Bowditch's compactification of the vertex set of the coned--off graph, not the disjoint-union topology \cite[\S\S~8--9]{Bowditch2012RelativelyHyperbolic}.
\end{definition}

The Bowditch compactification \(\overline G_B\) is a compact metrizable space \cite[\S\S~8--9]{Bowditch2012RelativelyHyperbolic} containing \(G\), with its discrete topology, as an open dense subset \cite[\S~2.3.1]{Weisman2026}.
Thus, every \(\xi\in\partial_B(G,\mathcal H)\) is the limit of a sequence of pairwise distinct elements \(g_n\in G\).
For any finite generating set \(S\), such a sequence satisfies \(\lvert g_n\rvert_S\to\infty\), since word-metric balls are finite.
Conversely, every sequence satisfying \(\lvert g_n\rvert_S\to\infty\) has a convergent subsequence by compact metrizability, and its limit belongs to \(\partial_B(G,\mathcal H)\) because every point of \(G\) is isolated in \(\overline G_B\).



The Floyd metric gives us a concrete way to attach endpoints to infinite combing paths.
It makes the tail of a geodesic ray short, so paths with a long common prefix have nearby endpoints.
Gerasimov's Floyd map theorem then carries these endpoints to the Bowditch boundary, giving the continuous extended evaluation map we will need for the measure comparison.

\begin{definition}\label{def:floyd-boundary}
Fix a function \(f(n) = F^{-n},~F >1\). For each \(v\in G\), we define a length as follows. For an edge \(e\) of \(\Cay(G,S)\), define
\[
d_S(v,e):=\min\{d_S(v,g):g\text{ is an endpoint of }e\}
\qquad\text{and}\qquad
\ell_{v}(e):=F^{-d_S(v,e)}.
\]
If \(\gamma\) is a finite edge path in \(\Cay(G,S)\), its \emph{Floyd length} is
\[
L_v(\gamma):=\sum_{e\subseteq\gamma}\ell_v(e).
\]
The induced \emph{Floyd metric} on \(G\) is then given by
\[
\rho_v(g,h):=\inf\{L_v(\gamma):\gamma\text{ is an edge path from }g\text{ to }h\}.
\]
The \emph{Floyd compactification}, denoted \( \overline G_F\) is the Cauchy completion of \(G\) with respect to \(\rho_v\), and the complement of \(G\) in \(\overline G_F\) is the \emph{Floyd boundary} associated to \(\rho_v\), denoted \(\partial_F G\). Observe \(\overline G_F: = G \sqcup \partial_FG\).
\end{definition}
\begin{proposition}\label{prop:floyd-fax}
    We list some useful facts of the Floyd metric and the Floyd boundary. We refer the reader to \cite{Floyd1980}, \cite{Gerasimov2012FloydMaps}, \cite{Karlsson2003}, and \cite{Yang2022} for more details.
    \begin{enumerate}
        \item  For any \(g\in G\) we have \(\rho_v(x,y) = \rho_{gv}(gx,gy)\).
        \item  Floyd metrics relative to different basepoints \(v\neq v' \in G\) satisfy
            \[
            F^{-d_S(v,v')} \leq \frac{\rho_v(x,y)}{\rho_{v'}(x,y)} \leq  F^{d_S(v,v')}.
            \]
        \item The action \(G\) on \(\Cay(G,S)\) extends continuously to \(\partial_F G\).
        In particular, \(G\) acts on \(\overline{G}_F\) as a convergence group action.
        \item \(\rho_v\) extends uniquely to a metric on \(\overline G_F\).
        Explicitly, if \(\xi,\eta\in\overline G_F\) are limits of Cauchy sequences \((x_n)\) and \((y_n)\) in \(G\), then
        \[
        \rho_v(\xi,\eta):=\lim_{n\to\infty}\rho_v(x_n,y_n).
        \]
        We use the same notation \(\rho_v\) for the original metric on \(G\) and its extension to \(\overline G_F\).
    \end{enumerate}
\end{proposition}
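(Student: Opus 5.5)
The four items are standard, and I would derive (1), (2) and (4) directly from the definitions, reserving the cited literature for the convergence-group part of (3). Throughout I would use two facts: the word metric \(d_S\) is left-invariant, and left multiplication by \(g\) is a graph automorphism of \(\Cay(G,S)\).

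\textbf{Items (1) and (2).} For (1), left-invariance gives \(d_S(gv,ge)=d_S(v,e)\) for every edge \(e\), so \(\ell_{gv}(ge)=\ell_v(e)\). Left multiplication then gives a bijection between edge paths from \(x\) to \(y\) and edge paths from \(gx\) to \(gy\) that preserves Floyd length, and taking infima yields \(\rho_{gv}(gx,gy)=\rho_v(x,y)\). For (2), the triangle inequality gives \(|d_S(v,e)-d_S(v',e)|\le d_S(v,v')\) for every edge \(e\). Hence
\[
F^{-d_S(v,v')}\ell_{v'}(e)\le \ell_v(e)\le F^{d_S(v,v')}\ell_{v'}(e).
\]
Summing over the edges of a path and taking infima over paths gives the stated two-sided bound.

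\textbf{Item (4).} If \((x_n)\) and \((y_n)\) are \(\rho_v\)-Cauchy, then
\[
|\rho_v(x_n,y_n)-\rho_v(x_m,y_m)|\le \rho_v(x_n,x_m)+\rho_v(y_n,y_m),
\]
so the limit exists. The same inequality shows that the limit does not change when either sequence is replaced by an equivalent Cauchy sequence. This is the usual metric on a Cauchy completion, and it is the unique continuous extension of \(\rho_v\) because \(G\) is dense. By (2) the metrics \(\rho_v\) and \(\rho_{v'}\) are bi-Lipschitz equivalent, so \(\overline G_F\) and \(\partial_F G\) do not depend on the basepoint.

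\textbf{Item (3), extension of the action.} Combining (1) and (2) gives
\[
\rho_v(gx,gy)=\rho_{g^{-1}v}(x,y)\le F^{|g|_S}\rho_v(x,y).
\]
So each \(g\) acts on \((G,\rho_v)\) by a bi-Lipschitz map, with inverse \(g^{-1}\) satisfying the same type of bound. Such a map sends Cauchy sequences to Cauchy sequences and therefore extends uniquely to a homeomorphism of \(\overline G_F\). The extensions compose correctly, so this gives a continuous action on \(\overline G_F\).

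\textbf{Item (3), convergence property (main obstacle).} I expect this to be the hardest step, and for it I would rely on Karlsson's visibility lemma \cite{Karlsson2003}. The lemma says that if a geodesic \([x,y]\) in \(\Cay(G,S)\) stays at distance at least \(R\) from \(v\), then \(\rho_v(x,y)\le\phi(R)\), where \(\phi(R)\to0\) as \(R\to\infty\). The argument would run as follows.

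\begin{enumerate}
\item Let \((g_n)\) be a sequence of distinct elements. Since \(\overline G_F\) is compact, pass to a subsequence with \(g_n v\to a\) and \(g_n^{-1}v\to b\) in \(\partial_F G\).
\item Fix a compact set \(K\subseteq\overline G_F\setminus\{b\}\). Points of \(K\) are uniformly \(\rho_v\)-far from \(g_n^{-1}v\) for large \(n\), because \(g_n^{-1}v\to b\).
\item By the contrapositive of Karlsson's lemma, the geodesics from points of \(K\) to \(v\) pass within bounded distance of \(v\). They therefore stay far from \(g_n^{-1}v\), since \(|g_n|_S\to\infty\).
\item Applying Karlsson's lemma with basepoint \(g_n^{-1}v\) gives \(\rho_{g_n^{-1}v}(x,v)\to0\) uniformly for \(x\in K\). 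By (1) this is the same as \(\rho_v(g_nx,g_nv)\to0\).
\item Hence \(g_nx\to a\) uniformly on \(K\), which is the convergence property.
\end{enumerate}

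For the details of the uniformity in step 4, and for treating boundary points of \(K\) via approximating sequences, I would defer to \cite{Gerasimov2012FloydMaps, Karlsson2003}.
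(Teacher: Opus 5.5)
The paper does not prove this proposition; it only cites \cite{Floyd1980}, \cite{Gerasimov2012FloydMaps}, \cite{Karlsson2003}, \cite{Yang2022}. Your route fills that in. Items (1), (2), (4) and the extension half of (3) are correct as you wrote them: they follow formally from left-invariance of \(d_S\) and the inequality \(|d_S(v,e)-d_S(v',e)|\le d_S(v,v')\). For the convergence property, Karlsson's visibility-lemma argument is the right tool. The advantage over the bare citation is that your proof isolates the one genuinely geometric input. It also shows that the statement's ``in particular'' is not an implication, since continuity of the extended action does not by itself give the convergence property.

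Your step 3 needs repair as written. Geodesics from \(x\) to \(v\) pass near \(v\) trivially, and that says nothing about avoiding \(g_n^{-1}v\). The contrapositive of Karlsson's lemma, applied with basepoint \(v\) to the \(\rho_v\)-separated pair \(x,g_n^{-1}v\), says something different: geodesics from \(x\) to \(g_n^{-1}v\) pass within a fixed \(R_0\) of \(v\). The triangle inequality then gives \(d_S(p,g_n^{-1}v)\ge d_S(v,g_n^{-1}v)-2R_0\) for every \(p\) on a geodesic \([x,v]\).

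You can also bypass Karlsson at this step. Put \(D_n:=d_S(v,g_n^{-1}v)\to\infty\), and suppose some \(p\) on a geodesic from \(v\) to \(x\) satisfied \(d_S(p,g_n^{-1}v)\le R\). The tail estimate of Lemma~\ref{lem:floyd-tail-estimate} would then give \(\rho_v(x,g_n^{-1}v)\le\bigl(R+(1-F^{-1})^{-1}\bigr)F^{-(D_n-R)}\), which contradicts step 2 for large \(n\), uniformly in \(x\).

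For boundary points of \(K\), you cannot transfer the estimate from \(K\cap G\) using continuity of the individual \(g_n\), because their Lipschitz constants blow up. Instead, set \(\delta=\rho_v(K,b)\) and run steps 2--4 on \(K'\cap G\), where \(K'=\{y:\rho_v(y,K)\le\delta/2\}\). This set is compact and still avoids \(b\). Then pass to pointwise limits, using that each \(g_n\) is continuous.

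Two smaller points:
\begin{itemize}
\item Your bi-Lipschitz constant is \(F^{d_S(v,g^{-1}v)}=F^{|v^{-1}gv|_S}\), which equals \(F^{|g|_S}\) only when \(v=\id\). This is harmless, since the constant is still finite.
\item Compactness of \(\overline G_F\), which you use in step 1, needs a line of justification. The space \((G,\rho_v)\) is totally bounded, because every \(x\) with \(d_S(v,x)\ge R\) lies within \(F^{-R}/(1-F^{-1})\) of the finite sphere of radius \(R\) about \(v\).
\end{itemize}
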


We now state Gerasimov's Floyd map theorem.

\begin{theorem}\cite[Proposition 3.4.6, Corollary~1.5]{Gerasimov2012FloydMaps}, \cite[Corollary~2.8]{GerasimovPotyagailo2013}\label{thm:floyd-map}
	Let \((G,\mathcal H)\) be a relatively hyperbolic pair and let \(S\) be a finite symmetric generating set of \(G\).
	There exists \(F_0>1\) such that, for every \(1<F\leq F_0\) and every basepoint \(v\in G\), the identity map \(\iota:(G,\rho_v)\to\overline G_B\) extends to a continuous \(G\)-equivariant surjection
	\[
		\overline \iota:\overline G_F\longrightarrow\overline G_B.
	\]
	Its restriction to the Floyd boundary is a continuous surjection
	\[
		\overline \iota|_{\partial_FG} :\partial_F G\longrightarrow\partial_B(G,\mathcal H).
	\]
\end{theorem}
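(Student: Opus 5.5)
Since this statement is quoted from \cite[Proposition~3.4.6, Corollary~1.5]{Gerasimov2012FloydMaps} and \cite[Corollary~2.8]{GerasimovPotyagailo2013}, in the paper we would simply cite it. For orientation, here is how we would reconstruct it, following Gerasimov's approach.

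\textbf{Reduction to a separation statement.} First, by Proposition~\ref{prop:floyd-fax}(2), changing the basepoint changes \(\rho_v\) only by a bounded multiplicative factor. So Cauchy sequences, and hence \(\overline G_F\), do not depend on \(v\), and we may fix \(v=\id\). Since \(\overline G_F\) is the completion of \(G\) and \(\overline G_B\) is compact metrizable, it suffices to prove a separation statement. Whenever two sequences \(g_n,h_n\to\infty\) in \(G\) converge in \(\overline G_B\) to distinct points \(\xi\neq\eta\), we want \(\liminf_n\rho_{\id}(g_n,h_n)>0\). Given this, a \(\rho\)-Cauchy sequence must have a unique Bowditch limit: two subsequences with distinct limits would contradict the estimate after interleaving. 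Moreover, \(\rho\)-equivalent Cauchy sequences have equal Bowditch limits. This gives a well-defined extension \(\overline\iota\). Its continuity follows from the same estimate applied via a diagonal argument. Surjectivity follows because \(\overline G_F\) is compact, since \(\overline\iota\) has dense image \(G\) in \(\overline G_B\). Equivariance follows from Proposition~\ref{prop:floyd-fax}(1),(3) together with the density of \(G\).

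\textbf{Proving the separation estimate.} We would use the fact that \(G\) acts on \(\overline G_B\) as a geometrically finite convergence group. In particular, the action on distinct pairs of \(\partial_B(G,\mathcal H)\) is cocompact modulo the parabolic points. This places a bounded-distance ``center'' on any relative geodesic joining \(g_n\) to \(h_n\) whose endpoints stay apart in \(\overline G_B\): a relative geodesic in \(\widehat{\Cay}(G,S,\mathcal H)\), or its lift in \(\Cay(G,S)\) penetrating finitely many peripheral cosets, passes within a uniformly bounded distance \(R\) of \(\id\). The Floyd length of any path \(\gamma\) from \(g_n\) to \(h_n\) then splits into two cases.

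\begin{enumerate}[label=(\roman*)]
\item If \(\gamma\) enters the ball \(B_{R'}(\id)\), its Floyd length is at least the contribution of one edge near \(\id\), which is \(\geq F^{-R'}\).
\item If \(\gamma\) avoids \(B_{R'}(\id)\), the exponential divergence of geodesics in relatively hyperbolic groups forces long detours. The relevant fact is that a path avoiding a large ball around a transition point of a relative geodesic must have length exponential in the avoided radius, counted in the hyperbolic directions outside the peripheral cosets. Summing the terms \(F^{-d}\) over such a detour then gives a lower bound independent of \(n\).
\end{enumerate}

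\textbf{Main obstacle.} We expect the hardest step to be case (ii) when the detour runs deep inside a peripheral coset \(gH\). There, distance grows only polynomially if, for example, \(H\) is virtually abelian. One cannot then directly beat the exponential weights \(F^{-d}\). Our first attempt would be to use the bounded coset penetration and fineness properties to route the estimate through transition points only. We would then choose \(F_0\) so that \(\log F_0\) is smaller than the exponential divergence rate of the coned-off hyperbolic graph. This choice is exactly why the theorem is stated only for \(1<F\leq F_0\). If this detailed bookkeeping resists, we would fall back on Gerasimov's abstract argument. That argument constructs the Floyd function from the convergence action via a uniform ``attractor-sum'' or visibility property and verifies Karlsson-type estimates there.
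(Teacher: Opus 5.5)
The paper gives no proof of its own for this statement; it imports it directly from Gerasimov and Gerasimov--Potyagailo, so your plan to cite it is exactly the paper's approach. Your reconstruction is only heuristic, though: the reduction to a separation estimate is sound, but case (ii) is precisely the nontrivial content of Gerasimov's theorem, which he obtains from the convergence dynamics of \(G\) on \(\partial_B(G,\mathcal H)\) (properly discontinuous on triples, cocompact on pairs). Case (ii) needs a uniform Floyd lower bound for paths that avoid a large word-metric ball about a \emph{transition} point near \(\id\); such paths can still stay at bounded coned-off distance from \(\id\) by running through a nearby peripheral coset, so divergence in \(\widehat{\Cay}(G,S,\mathcal H)\) does not give that bound.
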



To apply Theorem~\ref{thm:floyd-map}, it remains to show that the evaluation map \(\ev: \mathcal P^0 \to G\) extends continuously and surjectively to a map \(\overline{\ev}: \overline{\mathcal P}^0 \to \overline G_F\). 
For an infinite based path \(\sigma\), the natural candidate for its image is the limit in \(\overline G_F\) of the evaluated prefixes \(\ev(\sigma|_n)\).
We must therefore show that this sequence converges and that its limit varies continuously with \(\sigma\).
Both facts follow from a uniform bound on the Floyd length of the tail of a word-geodesic path: the bound guarantees convergence and shows that paths with a long common prefix have nearby images in the Floyd compactification.
The following lemma provides this required estimate.

\begin{lemma}\label{lem:floyd-tail-estimate}
	Let \(N\in\mathbb N\cup\{\infty\}\), and let \((g_k)_{0\leq k\leq N}\) be the sequence of vertices of a finite or infinite word-geodesic path in \(\Cay(G,S)\) based at \(v\in G\), parametrized so that \(d_S(v,g_k)=k\).
	For \(0\leq m<n\leq N\),
	\[
		\rho_v(g_m,g_n)\leq\sum_{k=m}^{n-1}F^{-k}\leq\frac{F^{-m}}{1-F^{-1}}.
	\]
	Consequently, every infinite word-geodesic ray based at \(v\) converges to a point of \(\partial_FG\).
\end{lemma}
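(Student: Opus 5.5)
The plan is to bound the Floyd distance by the Floyd length of the obvious connecting path, the segment of the given geodesic between \(g_m\) and \(g_n\), and then let Cauchy completeness handle the convergence claim. Since \(\rho_v\) is an infimum of Floyd lengths over edge paths from \(g_m\) to \(g_n\), we have
\[
\rho_v(g_m,g_n)\leq L_v(\gamma_{[m,n]})=\sum_{k=m}^{n-1}\ell_v(e_k),
\]
where \(\gamma_{[m,n]}\) is this segment and \(e_k\) denotes the edge joining \(g_k\) to \(g_{k+1}\).

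The key step is to compute each edge weight. By definition \(d_S(v,e_k)=\min\{d_S(v,g_k),d_S(v,g_{k+1})\}\). The parametrization gives \(d_S(v,g_k)=k\) and \(d_S(v,g_{k+1})=k+1\), so \(d_S(v,e_k)=k\) and \(\ell_v(e_k)=F^{-k}\). This yields the first inequality. The second follows by dominating the finite sum by the full geometric tail,
\[
\sum_{k\geq m}F^{-k}=\frac{F^{-m}}{1-F^{-1}},
\]
which converges because \(F>1\).

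For the consequence, take \(N=\infty\). The bound \(\rho_v(g_m,g_n)\leq F^{-m}/(1-F^{-1})\) holds uniformly in \(n>m\) and tends to \(0\) as \(m\to\infty\), so \((g_k)\) is \(\rho_v\)-Cauchy. It therefore converges to some point of \(\overline G_F\), the Cauchy completion. This limit cannot lie in \(G\). Indeed, \(|g_k|_S\to\infty\) forces the \(g_k\) to be pairwise distinct, and in fact they are distinct because \(d_S(v,g_k)=k\). Moreover, \(\rho_v\) restricted to any finite word ball is bounded below on distinct pairs: every edge path leaving a point of \(B\) has Floyd length at least \(F^{-R}\) on its first edge, where \(R\) bounds distances to \(v\) on that region. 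So points of \(G\) are isolated in \(\overline G_F\), and a sequence of distinct elements cannot converge to one of them. Hence the limit lies in \(\partial_FG\).

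The only mildly delicate point is this isolation argument. Concretely, for \(h\in G\), any path from \(h\) to a point \(x\neq h\) has Floyd length at least \(F^{-d_S(v,h)}\), so \(\rho_v(h,x)\geq F^{-d_S(v,h)}\). This bound extends to limits via Proposition~\ref{prop:floyd-fax}(4), so \(h\) is not a limit of a sequence of distinct elements.
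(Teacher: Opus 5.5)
Your proposal is correct and follows the paper's proof: you bound \(\rho_v(g_m,g_n)\) by the Floyd length \(\sum_{k=m}^{n-1}F^{-k}\) of the geodesic segment, then use Cauchy completeness and the isolation of points of \(G\) in \(\overline G_F\). The paper only asserts this isolation, while you justify it with the bound \(\rho_v(h,x)\geq F^{-d_S(v,h)}\). One small slip: \(|g_k|_S\to\infty\) does not by itself force the \(g_k\) to be pairwise distinct, but your follow-up reason, \(d_S(v,g_k)=k\), is the correct one.
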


\begin{proof}
	For each \(k\), the edge from \(g_k\) to \(g_{k+1}\) has Floyd length \(F^{-k}\).
	Thus the word-geodesic segment from \(g_m\) to \(g_n\) has Floyd length \(\sum_{k=m}^{n-1}F^{-k}\), and the stated estimate follows from the geometric-series formula.
	If the path is infinite, the estimate shows that \((g_n)_{n\geq 0}\) is Cauchy and hence converges to a point of \(\overline G_F\).
	Every point of \(G\) is isolated in \(\overline G_F\), while the vertices \(g_n\) are pairwise distinct, so the limit belongs to \(\partial_FG\).
\end{proof}

Now, fix \(1<F\leq F_0\) as in Theorem~\ref{thm:floyd-map}, and equip \(\Cay(G,S)\) with the corresponding Floyd metric \(\rho_{\id}\) based at \(\id\in G\).
The comparison in Proposition~\ref{prop:floyd-fax} identifies the Floyd completions for different basepoints by extending the identity on \(G\).

\begin{lemma}\label{lem:compactified-floyd-evaluation}
	The evaluation map \(\ev:\mathcal P^0\to G\) extends to a continuous surjection
	\[
		\overline{\ev}_F:\overline{\mathcal P}^{\,0}\longrightarrow\overline G_F
	\]
	defined on \(\mathcal P_\infty^0\) by
	\[
		\overline{\ev}_F(\sigma):= \lim_{n\to\infty}\ev(\sigma|_n)
	\]
\end{lemma}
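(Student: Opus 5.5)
The plan is to prove the three assertions in turn: that \(\overline{\ev}_F\) is well defined, that it is continuous, and that it is surjective.

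\textbf{Well definedness.} Take \(\sigma\in\mathcal P_\infty^0\). Every finite prefix \(\sigma|_n\) is a based path, so since \(\mathcal G\) is a geodesic combing its label is a geodesic word. Consecutive prefixes differ by one letter, so the vertices \(g_n:=\ev(\sigma|_n)\) form an infinite word-geodesic path based at \(\id\) with \(|g_n|_S=n\). Lemma~\ref{lem:floyd-tail-estimate} with \(v=\id\) then shows that \((g_n)\) is \(\rho_{\id}\)-Cauchy and converges to a point of \(\partial_FG\). On finite paths we simply set \(\overline{\ev}_F=\ev\).

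\textbf{Continuity.} Finite paths are isolated in \(\overline{\mathcal P}^{\,0}\), so only continuity at an infinite path \(\sigma\) needs checking. Suppose \(x_j\to\sigma\), so the common prefix length \(m_j\) of \(x_j\) and \(\sigma\) tends to infinity. Let \(y=\sigma|_{m_j}\), so that \(\ev(y)=g_{m_j}\).
\begin{itemize}
\item If \(x_j\) is infinite, then \(\overline{\ev}_F(x_j)\) is the limit of a geodesic ray through \(g_{m_j}\). Applying Lemma~\ref{lem:floyd-tail-estimate} to the tail of that ray and passing to the limit (Proposition~\ref{prop:floyd-fax}(4)) gives \(\rho_{\id}(g_{m_j},\overline{\ev}_F(x_j))\le F^{-m_j}/(1-F^{-1})\).
\item If \(x_j\) is finite, the same bound on \(\rho_{\id}(g_{m_j},\ev(x_j))\) follows directly from the finite case of the lemma.
\end{itemize}
The same estimate holds for \(\rho_{\id}(g_{m_j},\overline{\ev}_F(\sigma))\). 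The triangle inequality then gives \(\rho_{\id}(\overline{\ev}_F(x_j),\overline{\ev}_F(\sigma))\le 2F^{-m_j}/(1-F^{-1})\to0\). Because both spaces are metrizable, sequential continuity suffices.

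\textbf{Surjectivity.} Onto \(G\) it is immediate, since the combing is bijective. The main obstacle is hitting points of \(\partial_FG\), and the plan is a compactness argument. Take \(\xi\in\partial_FG\) and a sequence \(h_k\in G\) with \(h_k\to\xi\) in \(\rho_{\id}\). Then \(|h_k|_S\to\infty\): otherwise infinitely many \(h_k\) lie in a finite ball and \(\xi\) would be a point of \(G\), but points of \(G\) are isolated in \(\overline G_F\), since each edge at \(g\) has positive Floyd length. Let \(y_k\in\mathcal P^0\) be the combing paths with \(\ev(y_k)=h_k\).

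Next, \(\overline{\mathcal P}^{\,0}\) is compact: it is a closed subspace of the inverse limit of finite sets, or directly, by K\"onig's lemma, a diagonal argument over the finitely many prefixes of each length. Hence a subsequence \(y_{k}\) converges to some \(\sigma\). The length \(|y_k|=|h_k|_S\) tends to infinity and finite paths are isolated, so \(\sigma\) is infinite. By continuity, \(\overline{\ev}_F(\sigma)=\lim\ev(y_k)=\lim h_k=\xi\).

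The only delicate points are the compactness of \(\overline{\mathcal P}^{\,0}\) and the isolation of \(G\) in the Floyd completion. Both are routine: the latter because \(\rho_{\id}(g,h)\ge F^{-|g|_S-1}\) for \(h\neq g\), as any path leaving \(g\) uses an edge at \(g\).
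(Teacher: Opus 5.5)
Your proposal is correct and follows essentially the same route as the paper. Continuity at an infinite path comes from the Floyd tail estimate and the triangle inequality, and surjectivity comes from lifting an approximating sequence to combing paths, extracting a convergent subsequence in the compact space \(\overline{\mathcal P}^{\,0}\), and applying continuity. Your additional remarks fill in details the paper leaves implicit but does not need: well-definedness via the geodesic property of prefixes, the isolation of points of \(G\) in \(\overline G_F\), and the fact that the subsequential limit is an infinite path.
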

\begin{proof}
    Let \((z_n)\) be a sequence in \(\overline{\mathcal P}^{\,0}\) converging to \(\sigma\in\mathcal P_\infty^0\).
    Fix \(\varepsilon>0\), and choose \(m\) sufficiently large that
    \[
    	\frac{2F^{-m}}{1-F^{-1}}<\varepsilon.
    \]
    By the definition of the prefix topology, for all sufficiently large \(n\), the paths \(z_n\) and \(\sigma\) share the prefix \(\sigma|_m\).
    If \(z_n\) is finite, Lemma~\ref{lem:floyd-tail-estimate} gives
    \[
    	\rho_{\id}\left(\overline{\ev}_F(z_n),\ev(\sigma|_m)\right)\leq\frac{F^{-m}}{1-F^{-1}}.
    \]
    If \(z_n\) is infinite, the same estimate follows by applying Lemma~\ref{lem:floyd-tail-estimate} to its finite truncations and passing to the limit in \(\overline G_F\) as in Proposition \ref{prop:floyd-fax}.
    Applying the same estimate to \(\sigma\) gives
    \[
    	\rho_{\id}\left(\ev(\sigma|_m),\overline{\ev}_F(\sigma)\right)\leq\frac{F^{-m}}{1-F^{-1}}.
    \]
    Therefore,
    \[
    	\rho_{\id}\left(\overline{\ev}_F(z_n),\overline{\ev}_F(\sigma)\right)\leq\frac{2F^{-m}}{1-F^{-1}}<\varepsilon
    \]
    for all sufficiently large \(n\).
    Thus \(\overline{\ev}_F(z_n)\to\overline{\ev}_F(\sigma)\) and \(\overline \ev_F\) is continuous.

    It remains to prove that \(\overline{\ev}_F\) is surjective.
    Let \(\xi\in\overline G_F\).
    Since \(G\) is dense in \(\overline G_F\), there is a sequence \((g_n)\) in \(G\) such that \(g_n\to\xi.\)
    For each \(n\), the bijectivity of the combing gives a based path \(y_n\in\mathcal P^0\) such that \(\ev(y_n)=g_n.\)
    Since \(\overline{\mathcal P}^{\,0}\) is compact and metrizable, there is a subsequence \((y_{n_j})\) converging to some \(z\in\overline{\mathcal P}^{\,0}\).
    By continuity of \(\overline{\ev}_F\),
    \[
    	\overline{\ev}_F(z)=\lim_{j\to\infty}\overline{\ev}_F(y_{n_j}).
    \]
    Since \(\overline{\ev}_F\) extends evaluation on finite based paths,
    \[
    	\overline{\ev}_F(y_{n_j})=\ev(y_{n_j})=g_{n_j}.
    \]
    Therefore,
    \[
    	\overline{\ev}_F(z)=\lim_{j\to\infty}g_{n_j}=\xi.
    \]
    Since \(\xi\) is arbitrary, it follows that \(\overline{\ev}_F\) is surjective.
\end{proof}

\begin{lemma}\label{lem:compactified-bowditch-evaluation}
	The map
	\[
		\overline{\ev}:=\overline \iota \circ\overline{\ev}_F:\overline{\mathcal P}^{\,0}\longrightarrow\overline G_B
	\]
	is a continuous surjection extending \(\ev:\mathcal P^0\to G\).
\end{lemma}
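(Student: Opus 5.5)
The plan is to assemble the statement from Lemma~\ref{lem:compactified-floyd-evaluation} and Theorem~\ref{thm:floyd-map}, checking three properties in turn: continuity, extension of \(\ev\), and surjectivity.

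\emph{Continuity.} By Lemma~\ref{lem:compactified-floyd-evaluation}, \(\overline{\ev}_F:\overline{\mathcal P}^{\,0}\to\overline G_F\) is continuous. This uses the fixed \(1<F\leq F_0\) and the basepoint \(\id\). Theorem~\ref{thm:floyd-map} applied with \(v=\id\) gives a continuous map \(\overline\iota:\overline G_F\to\overline G_B\). Continuity of \(\overline{\ev}=\overline\iota\circ\overline{\ev}_F\) is then immediate, since a composition of continuous maps is continuous.

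\emph{Extension.} For a finite based path \(y\in\mathcal P^0\), we have \(\overline{\ev}_F(y)=\ev(y)\in G\), because \(\overline{\ev}_F\) extends \(\ev\). Since \(\overline\iota\) extends the identity map \(\iota\) on \(G\), we get \(\overline\iota(\ev(y))=\ev(y)\). One small point needs care here. Theorem~\ref{thm:floyd-map} says \(\overline\iota\) extends the identity of \(G\), and \(G\) sits inside both compactifications as the set of isolated points. So the restriction \(\overline\iota|_G\) really is the inclusion \(G\hookrightarrow\overline G_B\), and no ambiguity arises about identifying \(G\) inside \(\overline G_F\).

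\emph{Surjectivity.} Lemma~\ref{lem:compactified-floyd-evaluation} shows \(\overline{\ev}_F\) is surjective onto \(\overline G_F\), and Theorem~\ref{thm:floyd-map} states \(\overline\iota\) is surjective onto \(\overline G_B\). A composition of surjections is surjective. As a byproduct worth recording for later use, \(\overline{\ev}\) maps \(\mathcal P_\infty^0\) onto \(\partial_B(G,\mathcal H)\). It does not send infinite paths into \(G\), since \(\overline{\ev}_F(\mathcal P^0_\infty)\subseteq\partial_FG\) by Lemma~\ref{lem:floyd-tail-estimate} and \(\overline\iota(\partial_FG)\subseteq\partial_B\). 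Surjectivity onto the boundary follows because \(\ev\) is a bijection from \(\mathcal P^0\) onto \(G\).

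The only step I expect to need care is the compatibility of basepoints. Theorem~\ref{thm:floyd-map} holds for every basepoint, and Proposition~\ref{prop:floyd-fax}(2) shows that the Floyd completions for different basepoints coincide. Using \(\rho_{\id}\) throughout therefore causes no issue. Beyond this bookkeeping the proof is a direct composition.
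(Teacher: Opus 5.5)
Your proposal is correct and matches the paper's proof. The paper also observes that \(\overline{\ev}\) is a composition of the continuous surjections from Lemma~\ref{lem:compactified-floyd-evaluation} and Theorem~\ref{thm:floyd-map}, and that \(\overline\iota\) restricts to the identity on \(G\). Your extra observation that \(\overline{\ev}\) carries \(\mathcal P_\infty^0\) onto \(\partial_B(G,\mathcal H)\) is also correctly justified, and it supplies the inclusion \(\overline{\ev}(\mathcal P_\infty^0)\subseteq\partial_B(G,\mathcal H)\) that the paper later uses without comment in Theorem~\ref{thm:critical-counting-convergence}.
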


\begin{proof}
	Theorem~\ref{thm:floyd-map} and Lemma~\ref{lem:compactified-floyd-evaluation} show that \(\overline{\ev}\) is a composition of continuous surjections.
	Since \(\overline \iota\) restricts to the identity on \(G\), the map \(\overline{\ev}\) extends evaluation on finite paths.
\end{proof}

\begin{ainotice}
    We remark that the idea to first pass from the compactified path space to the Floyd compactification, then using the Floyd map theorem to pass to the Bowditch compactification was a suggestion of AI.
\end{ainotice}

With evaluation extended to infinite paths, we can now compare the law of a combing ray with a boundary measure obtained by weighted counting.
We follow the counting strategy of Calegari--Fujiwara \cite{CalegariFujiwara2010}, transferring each group element's weight to its unique based path in the combing.
After recalling weak convergence, we normalize these counts and show that the resulting path measures approach the based Markov law.
Continuity of extended evaluation then identifies the limit of this specific counting sequence with the distribution of endpoints.

\subsection{Weak convergence and Patterson--Sullivan measures}

We briefly recall some properties of families Borel probability measures and weak convergence which we will need. These results are standard, and they are only included to help keep this manuscript more self contained.

\begin{definition}
Let \(X\) be a metrizable space, and let \((P_n)\) and \(P\) be finite Borel measures on \(X\).
We say that \((P_n)\) \emph{converges weakly} to \(P\), written \(P_n\rightharpoonup P\), if
\[
\lim_{n\to\infty}\int_Xf\,dP_n=\int_Xf\,dP
\]
for every bounded continuous function \(f:X\to\mathbb R\).
\end{definition}

\begin{definition}
A family \(\Pi\) of Borel probability measures on a metrizable space \(X\) is \emph{tight} if, for every \(\varepsilon>0\), there exists a compact set \(K_\varepsilon\subseteq X\) such that \(P(K_\varepsilon)>1-\varepsilon\) for every \(P\in\Pi\).
\end{definition}

\begin{proposition}\label{prop:weak-sequential-compactness}
Let \(X\) be a compact metrizable space, and let \((P_n)\) be a sequence of Borel probability measures on \(X\).
Then there exist a subsequence \((P_{n_j})\) and a Borel probability measure \(P\) on \(X\) such that \(P_{n_j}\rightharpoonup P\).
\end{proposition}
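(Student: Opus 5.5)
The plan is to identify weak convergence with convergence in the weak-* topology on \(C(X)^*\) and then use compactness of the unit ball. Since \(X\) is compact, every continuous function on \(X\) is bounded. So the bounded continuous functions on \(X\) are exactly the Banach space \(C(X)\) with the sup norm. By the Riesz representation theorem, Borel probability measures on \(X\) correspond to positive linear functionals \(\Lambda\) on \(C(X)\) with \(\Lambda(1)=1\). Here we use that every finite Borel measure on a compact metric space is regular, so the correspondence is bijective.

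First, I will check that \(C(X)\) is separable when \(X\) is compact metrizable. Fix a metric \(d\) and a countable dense set \(\{x_k\}\subseteq X\). The functions \(x\mapsto d(x,x_k)\), together with the constant \(1\), separate points. Hence the algebra they generate over \(\mathbb Q\) is countable, and by Stone--Weierstrass it is dense. Let \(\{f_m\}\) be a countable dense subset of \(C(X)\).

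Second, I will run a diagonal argument. For each \(m\), the numbers \(\int f_m\,dP_n\) lie in \([-\|f_m\|_\infty,\|f_m\|_\infty]\). Successive extraction followed by a diagonal subsequence gives \((P_{n_j})\) such that \(\Lambda(f_m):=\lim_j\int f_m\,dP_{n_j}\) exists for every \(m\). For an arbitrary \(f\in C(X)\), pick \(f_m\) with \(\|f-f_m\|_\infty<\varepsilon\). The sequence \(\int f\,dP_{n_j}\) is then Cauchy up to \(2\varepsilon\), because each \(P_{n_j}\) has total mass \(1\). So \(\Lambda(f):=\lim_j\int f\,dP_{n_j}\) exists for all \(f\in C(X)\).

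Third, the limit functional \(\Lambda\) is linear, positive (\(f\geq 0\) implies \(\Lambda(f)\geq 0\)), and satisfies \(\Lambda(1)=1\). These properties hold because each approximating functional has them and they pass to limits. Riesz representation then produces a Borel probability measure \(P\) with \(\Lambda(f)=\int f\,dP\), which is exactly \(P_{n_j}\rightharpoonup P\).

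The only step requiring care is the separability of \(C(X)\), or equivalently the use of Riesz representation. Alternatively, one can bypass the separability argument by citing Banach--Alaoglu together with metrizability of the weak-* topology on the unit ball of the dual of a separable space. Another route is to note that the family is trivially tight, since \(X\) itself is compact, and invoke Prokhorov's theorem directly. The paper's inclusion of the tightness definition suggests it intends the Prokhorov route.
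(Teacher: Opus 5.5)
Your proof is correct, but its main argument differs from the paper's. The paper's proof is a single citation. The family \(\{P_n\}\) is trivially tight (take \(K_\varepsilon=X\)), a compact metrizable space is complete and separable, and Prokhorov's theorem then gives a weakly convergent subsequence. This is the alternative you list last, and your guess from the tightness definition is right.

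Your main argument instead proves the compact case of Prokhorov from scratch, and each step checks out:
\begin{enumerate}
\item separability of \(C(X)\) via Stone--Weierstrass applied to the functions \(d(\cdot,x_k)\);
\item a diagonal extraction over a countable dense set of test functions;
\item a uniform-approximation step, using total mass one, to extend convergence to all of \(C(X)\);
\item Riesz representation to realize the limit functional as a Borel probability measure.
\end{enumerate}
Compactness enters where it should. It makes every continuous function bounded, so \(C(X)\) is exactly the space of test functions, and it forces \(\Lambda(1)=1\), so no mass escapes. Your route buys self-containment and shows why compactness alone replaces tightness. The paper's route buys brevity by deferring to a standard theorem whose general form also covers noncompact spaces.

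You only use the existence half of Riesz. The remark that every finite Borel measure on a compact metric space is regular, which gives bijectivity, is true but not needed. Metrizability is essential only for the separability of \(C(X)\), which lets the diagonal argument produce a subsequence rather than merely a convergent subnet.
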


\begin{proof}
The family \(\{P_n:n\geq 1\}\) is tight because one may take \(K_\varepsilon=X\) for every \(\varepsilon>0\).
Since a compact metrizable space is complete and separable, Prokhorov's theorem, for example see \cite[Theorem~5.1]{Billingsley1999}, gives a weakly convergent subsequence.
\end{proof}



Compactness gives weakly convergent subsequences, but it does not yet identify their limits.
We now specify the weighted counting sequence that we will compare with the based Markov measure.
The Poincar\'e series identifies the critical weight, and dividing each truncated count by its total mass gives a probability measure.
The growth of that normalizing mass will be used when we pass to paths.

\begin{definition}[Poincar\'e series]
The \emph{Poincar\'e series} of \((G,d_S)\) is
\[
\zeta_G(s):=\sum_{g\in G}e^{-s|g|_S}.
\]
Its \emph{critical exponent} is the supremum of the values of \(s\) for which this series diverges.
\end{definition}

Because every vertex of \(\mathcal G\) is accessible from \(v_0\), the exponential growth rate of \(|\mathcal P_n^0|=\mathbf e_0^TM^n\mathbf1\) is \(\lambda\). The bijection \(\mathcal P_n^0\leftrightarrow G_n\) and Lemma~\ref{lem:pure-sphere-growth} therefore show that the critical exponent is \(\log\lambda\) and that the Poincar\'e series diverges at this value. Set
\begin{equation}\label{eq:critical-normalizing-constant}
Z_n:=\sum_{|g|_S\leq n}\lambda^{-|g|_S}
=\sum_{k=0}^n\lambda^{-k}|G_k|
=\sum_{k=0}^n\lambda^{-k}|\mathcal P_k^0|.
\end{equation}
The associated probability measure on \(G\) is
\begin{equation}\label{eq:critical-counting-probabilities}
\nu_n:=\frac{1}{Z_n}\sum_{|g|_S\leq n}\lambda^{-|g|_S}\delta_g.
\end{equation}
We also regard \(\nu_n\) as a measure on the Bowditch compactification \(\overline G_B=G\sqcup\partial_B(G,\mathcal H)\) that assigns no mass to \(\partial_B(G,\mathcal H)\).

\begin{definition}\label{def:critical-counting-ps-measure}
A \emph{Patterson--Sullivan measure} for \((G,d_S)\) is any weak limit of \((\nu_n)_{n\geq0}\) on \(\overline G_B\). We call each \(\nu_n\) an \emph{approximate} of a Patterson--Sullivan measure.
\end{definition}

\begin{lemma}\label{lem:critical-normalization}
    The limit as \(n\to\infty\) of \(Z_n/n\) exists and satisfies
    \[
        \lim_{n\to\infty}\frac{Z_n}{n}=r_{v_0}>0,
    \]
    where \(r_{v_0}=\mathbf e_0^Tr\).
\end{lemma}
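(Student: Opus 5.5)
We express \(Z_n\) through the adjacency matrix and then apply the Ces\`aro Perron--Frobenius projection of Lemma~\ref{lem:cesaro-pf-projections}. By \eqref{eq:based-path-count} and \eqref{eq:critical-normalizing-constant},
\[
Z_n=\sum_{k=0}^n\lambda^{-k}\,\mathbf e_0^TM^k\mathbf 1=\mathbf e_0^T\Bigl(\sum_{k=0}^n\lambda^{-k}M^k\mathbf 1\Bigr).
\]
Dividing by \(n\) and using the definition of \(r\) in Lemma~\ref{lem:cesaro-pf-projections} as the limit of \(\frac1n\sum_{i=0}^n\lambda^{-i}M^i\mathbf 1\), linearity of the coordinate functional \(\mathbf e_0^T\) gives \(Z_n/n\to\mathbf e_0^Tr=r_{v_0}\). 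Here the \(\lambda\) of the paper has to match: the spectral radius of \(M\) must equal the growth constant from Lemma~\ref{lem:pure-sphere-growth}. I would check this by noting that \(\mathbf e_0^TM^n\mathbf 1\asymp\lambda_{\mathrm{gr}}^n\), that every vertex is accessible from \(v_0\), and that \(\mathbf 1\) dominates every \(\mathbf e_j\). These facts show that the growth rate of \(\mathbf e_0^TM^n\mathbf 1\) equals the spectral radius, which is already asserted in the paragraph before \eqref{eq:critical-normalizing-constant}.

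\textbf{Positivity.} Lemma~\ref{lem:cesaro-pf-projections} only gives \(r\neq0\) and \(r\geq0\), so \(r_{v_0}>0\) needs a separate argument. I would take the lower bound directly from almost semisimplicity (Lemma~\ref{lem:combing-almost-semisimple}):
\[
\lambda^{-k}|\mathcal P_k^0|\geq K^{-1}\quad\text{for all }k\geq1,
\]
so \(Z_n\geq 1+nK^{-1}\) and hence \(\liminf Z_n/n\geq K^{-1}>0\). Since the limit exists and equals \(r_{v_0}\), we get \(r_{v_0}\geq K^{-1}>0\).

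\textbf{Alternative route and obstacle.} If one prefers not to lean on the exact form of the Ces\`aro limit, there is a second argument. Using \(Mr=\lambda r\) and accessibility, one would show that \(r_j>0\) for some \(j\), and then \(r_{v_0}\geq\lambda^{-m}(M^m)_{0j}r_j>0\) for a path of length \(m\) from \(v_0\) to \(v_j\). The only real obstacle is bookkeeping. The Ces\`aro average in Lemma~\ref{lem:cesaro-pf-projections} sums \(n+1\) terms divided by \(n\), exactly as \(Z_n\) does, so the indexing matches and no correction term is needed. Even if the indexing were off by one, the discrepancy would be a single term \(\lambda^{-n}\mathbf e_0^TM^n\mathbf 1\leq K\), which contributes \(O(1/n)\) and does not affect the limit.
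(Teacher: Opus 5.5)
Your proposal is correct and follows the paper's proof. Like the paper, you write \(Z_n/n\) as \(\mathbf e_0^T\) applied to the Ces\`aro average of Lemma~\ref{lem:cesaro-pf-projections}, and you get \(r_{v_0}>0\) from the uniform lower bound \(\lambda^{-k}|\mathcal P_k^0|\geq K^{-1}\) coming from pure exponential sphere growth. Your additional checks are sound but go beyond what the paper needs: the identification of the spectral radius with the growth constant, and the alternative positivity argument via \(M^m r=\lambda^m r\) and accessibility.
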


\begin{proof}
Equation~\eqref{eq:based-path-count} and Lemma~\ref{lem:cesaro-pf-projections} give
\[
\frac{Z_n}{n}
=\mathbf e_0^T\left(\frac{1}{n}\sum_{k=0}^n\lambda^{-k}M^k\mathbf1\right)
\longrightarrow\mathbf e_0^Tr=r_{v_0}.
\]
The lower bound in Lemma~\ref{lem:pure-sphere-growth} makes \(\lambda^{-k}|G_k|\) uniformly positive for \(k\geq1\). Thus \(r_{v_0}>0\).
\end{proof}

\subsection{Lifting Patterson--Sullivan approximates to paths}


To lift a counting measure, we lift each group element's mass to its unique based path in the combing.
This changes where the mass is recorded, not the weighting.
The normalization we computed above lets us compare the mass of paths sharing a fixed prefix with the probability of that prefix under the based Markov law.
We now compare the lifted measures with \(\mu^0\) on \(\overline{\mathcal P}^{\,0}=\mathcal P^0\sqcup\mathcal P_\infty^0\), extending \(\mu^0\) by zero on finite paths.

\begin{lemma}\label{lem:lifted_measures}
	Each measure \(\nu_n\) on \(\overline G_B\) has a unique lift to a measure \(\eta_n\) on \(\overline{\mathcal P}^0\) supported on \(\mathcal P^0\).
	Moreover,
    \[
        \eta_n \rightharpoonup \mu^0,
    \]
    where \(\mu^0\) is the based Markov measure from Lemma \ref{lem:based-markov-measure} supported on \(\mathcal P^0_\infty\).
\end{lemma}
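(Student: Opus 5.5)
The plan is to define the lift explicitly, reduce weak convergence to cylinder sets, and then compute the mass of each cylinder using the Cesàro limits from Lemma~\ref{lem:cesaro-pf-projections}.

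\textbf{The lift.} Since \(\ev:\mathcal P^0\to G\) is a bijection and \(\nu_n\) is supported on the finite set \(B_n\subseteq G\), set
\[
\eta_n:=\frac{1}{Z_n}\sum_{y\in\mathcal P^0,\ |y|\leq n}\lambda^{-|y|}\delta_y .
\]
This uses \(|\ev(y)|_S=|y|\), which holds because the combing is geodesic. Then \(\overline{\ev}_*\eta_n=\nu_n\). Uniqueness among measures supported on \(\mathcal P^0\) with this pushforward follows because \(\ev\) is injective on \(\mathcal P^0\): each atom \(\delta_g\) has exactly one preimage.

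\textbf{Reduction to cylinders.} The space \(\overline{\mathcal P}^{\,0}\) is compact metrizable. Its topology has a countable basis of clopen sets, namely the singletons \(\{y\}\) for \(y\in\mathcal P^0\) and the extended cylinders
\[
\overline{[y]}:=\{z\in\overline{\mathcal P}^{\,0}: z \text{ extends } y\}.
\]
This family is closed under finite intersections: two such sets are either nested or disjoint. Every open set is a countable disjoint union of members of this family.

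So it suffices to prove two things. First, \(\eta_n(\{y\})\to 0\) for each finite path \(y\). Second, \(\eta_n(\overline{[y]})\to\mu^0([y])\) for each \(y\). Once these hold, the portmanteau theorem (or indicators of clopen sets being continuous, plus a standard approximation of continuous functions by simple functions on clopen partitions of the compact totally disconnected space) gives \(\eta_n\rightharpoonup\mu^0\). The first claim is immediate, since \(\eta_n(\{y\})\leq \lambda^{-|y|}/Z_n\to0\) because \(Z_n\sim n\,r_{v_0}\to\infty\) by Lemma~\ref{lem:critical-normalization}.

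\textbf{The main step.} Fix \(y\in\mathcal P_k^0\) ending at the vertex \(v_i\). Extensions of \(y\) of total length \(k+j\) correspond bijectively to paths of length \(j\) starting at \(v_i\). By Proposition~\ref{prop:number_of_paths} there are \(\mathbf e_i^TM^j\mathbf 1\) of them. Hence, for \(n\geq k\),
\[
\eta_n(\overline{[y]})=\frac{1}{Z_n}\sum_{j=0}^{n-k}\lambda^{-(k+j)}\,\mathbf e_i^TM^j\mathbf 1=\frac{\lambda^{-k}}{Z_n/n}\cdot\frac{n-k}{n}\cdot\mathbf e_i^T\Bigl(\frac{1}{n-k}\sum_{j=0}^{n-k}\lambda^{-j}M^j\mathbf1\Bigr).
\]
By Lemma~\ref{lem:cesaro-pf-projections} the bracketed average tends to \(r\). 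By Lemma~\ref{lem:critical-normalization}, \(Z_n/n\to r_{v_0}\). So the limit is \(\lambda^{-k}r_{v_i}/r_{v_0}\), which equals \(\mu^0([y])\) by Lemma~\ref{lem:based-markov-measure}.

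The case \(r_{v_i}=0\) needs no separate treatment: the formula gives \(0\) in the limit, consistent with \(\mu^0([y])=0\).

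\textbf{Main obstacle.} I expect the main difficulty to be the passage from cylinder convergence to genuine weak convergence, since \(\mu^0\) sits on \(\mathcal P^0_\infty\) while each \(\eta_n\) sits on \(\mathcal P^0\). I would handle it as follows. Fix a continuous \(f\) and a depth \(m\). Partition \(\overline{\mathcal P}^{\,0}\) into the finitely many singletons \(\{y\}\) with \(|y|<m\) together with the sets \(\overline{[y]}\) with \(|y|=m\). By uniform continuity, \(f\) varies by at most \(\varepsilon_m\to0\) on each piece of this partition. The integrals of the corresponding step function converge by the two cylinder claims above, and this gives the result.
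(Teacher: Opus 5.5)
Your proposal is correct. The lift and the main cylinder computation match the paper's: the paper also takes \(\eta_n\) to be the pushforward of \(\nu_n\) under the inverse of \(\ev|_{\mathcal P^0}\), computes \(\eta_n(\mathcal C(y))=\frac{\lambda^{-k}}{Z_n}\sum_{q=0}^{n-k}\lambda^{-q}\mathbf e_i^TM^q\mathbf 1\), and passes to the limit using Lemmas~\ref{lem:cesaro-pf-projections} and~\ref{lem:critical-normalization}. It also handles singletons exactly as you do.

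Where you differ is the step from set-wise convergence to weak convergence. The paper argues softly:
\begin{itemize}
\item by Proposition~\ref{prop:weak-sequential-compactness} (Prokhorov), every subsequence of \((\eta_n)\) has a weakly convergent further subsequence;
\item since the singletons and extended cylinders are clopen, any such limit agrees with \(\mu^0\) on this generating \(\pi\)-system;
\item the uniqueness theorem then identifies the limit as \(\mu^0\), and the subsequence principle gives convergence of the whole sequence.
\end{itemize}

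You instead approximate a continuous \(f\) uniformly by step functions on the finite clopen partition of depth \(m\). This is self-contained and avoids Prokhorov and the \(\pi\)--\(\lambda\) uniqueness theorem. It also gives the explicit estimate \(\limsup_n\bigl|\int f\,d\eta_n-\int f\,d\mu^0\bigr|\le 2\varepsilon_m\).

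The price is that you must justify that the oscillation of \(f\) on the depth-\(m\) sets \(\overline{[y]}\) tends to zero uniformly in \(y\). You invoke ``uniform continuity'' without naming a metric. You can close this by using the compatible ultrametric \(d(z,z')=2^{-|z\wedge z'|}\) for \(z\neq z'\), where \(z\wedge z'\) is the longest common prefix; in it every \(\overline{[y]}\) has diameter at most \(2^{-|y|}\). A short compactness argument would also do.

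Your first option also works: you checked that every open set is a countable disjoint union of members of your \(\pi\)-system, so Billingsley's criterion applies directly. The paper's route needs no oscillation estimate and works in any compact metrizable space once convergence is known on a clopen generating \(\pi\)-system. Yours makes more explicit use of the totally disconnected prefix structure of \(\overline{\mathcal P}^{\,0}\).
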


\begin{proof}
    For each \(g\in G\), let \(y_g\in\mathcal P^0\) be the unique based path satisfying \(\ev(y_g)=g\).
    Let
    \[
        \iota:=\left(\overline \ev|_{\mathcal P^0}\right)^{-1}:G\longrightarrow\mathcal P^0.
    \]
    Observe, this is a bijection as the restriction of \(\overline \ev\) to \(\mathcal P^0\) is simply the map \(\ev\), which is a bijection.  Since \(\nu_n\) is supported on \(G\), define its lift to \(\overline{\mathcal P}^{\,0}\) by
    \[
        \eta_n:=\iota_*\nu_n.
    \]
    Thus,
    \[
        \eta_n = \frac{1}{Z_n} \sum_{\lvert g\rvert_S\leq n} \lambda^{-\lvert g\rvert_S}\delta_{y_g}, \qquad y_g=\iota(g).
    \]
    The measure \(\eta_n\) is supported on \(\mathcal P^0\), and the bijectivity of \(\ev:\mathcal P^0\to G\) gives
    \[
        (\overline{\ev})_*\eta_n=\nu_n.
    \]
    Moreover, this is the unique lift of \(\nu_n\) supported on \(\mathcal P^0\), since the mass of each finite path \(y_g\) must equal \(\nu_n(\{g\})\). Let \(y\in\mathcal P_k^0\) terminate at \(v_i\), and let
    \[
    \mathcal C(y):=\{z\in\overline{\mathcal P}^{\,0}:z\text{ extends }y\}.
    \]
	Since \(\mathcal C(y)\cap\mathcal P_\infty^0=[y]\), the zero extension gives \(\mu^0(\mathcal C(y))=\mu^0([y])\).
    For \(n\geq k\), the finite paths in \(\mathcal C(y)\) having length at most \(n\) are obtained by extending \(y\) by a path of length \(q\), where \(0\leq q\leq n-k\). Proposition~\ref{prop:number_of_paths} therefore gives
    \[
        \eta_n(\mathcal C(y)) = \frac{\lambda^{-k}}{Z_n}\sum_{q=0}^{n-k}\lambda^{-q}\mathbf e_i^TM^q\mathbf 1.
    \]
    Since \(k\) is fixed, Lemmas~\ref{lem:cesaro-pf-projections} and \ref{lem:critical-normalization} give
	\[
		\eta_n(\mathcal C(y))=\lambda^{-k}\frac{\frac{1}{n}\sum_{q=0}^{n-k}\lambda^{-q}\mathbf e_i^TM^q\mathbf 1}{Z_n/n}\longrightarrow\frac{\lambda^{-k}r_{v_i}}{r_{v_0}}=\mu^0([y])=\mu^0(\mathcal C(y)).
	\]
    If \(z\in\mathcal P^0\) is a fixed finite path, then for all \(n\geq |z|\),
    \[
        \eta_n(\{z\})=\frac{\lambda^{-|z|}}{Z_n}\longrightarrow 0=\mu^0(\{z\}),
    \]
    because \(Z_n\to\infty\).
    
	Since \(\overline{\mathcal P}^{\,0}\) is compact and metrizable, Proposition~\ref{prop:weak-sequential-compactness} implies that every subsequence of \((\eta_n)\) has a weakly convergent further subsequence.
	Let \((\eta_{n_j})\) be any weakly convergent subsequence, and write \(\eta_{n_j}\rightharpoonup\eta\).
	The finite-path singletons together with the sets \(\mathcal C(y)\) form a clopen \(\pi\)-system that generates the Borel \(\sigma\)-algebra of \(\overline{\mathcal P}^{\,0}\).
	Since every set \(A\) in this family is clopen, weak convergence (and observing the indicator function for a clopen set is continuous) gives \(\eta_{n_j}(A)\to\eta(A)\).
	The preceding calculations also give \(\eta_{n_j}(A)\to\mu^0(A)\), and hence \(\eta(A)=\mu^0(A)\).
	The uniqueness theorem for measures therefore gives \(\eta=\mu^0\).
	Thus every subsequence of \((\eta_n)\) has a further subsequence converging weakly to \(\mu^0\), which implies that
	\[ \eta_n\rightharpoonup\mu^0.\qedhere \] 
\end{proof}

\begin{theorem}\label{thm:critical-counting-convergence}
	\label{thm:intro-markov-ps}
	Let \(\nu:=(\overline{\ev})_*\mu^0\).
	Then the measures \(\nu_n\) converge weakly to \(\nu\) on \(\overline G_B\).
	Moreover, \(\nu\) is a Patterson--Sullivan measure and is supported on \(\partial_B(G,\mathcal H) \subseteq \overline G_B\).
\end{theorem}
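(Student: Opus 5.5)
The plan is to push the weak convergence of Lemma~\ref{lem:lifted_measures} forward along the continuous map \(\overline{\ev}\) of Lemma~\ref{lem:compactified-bowditch-evaluation}. Pushforward under a continuous map preserves weak convergence.

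\textbf{Pushforward.} Let \(f:\overline G_B\to\mathbb R\) be bounded and continuous. Then \(f\circ\overline{\ev}\) is bounded and continuous on \(\overline{\mathcal P}^{\,0}\). By Lemma~\ref{lem:lifted_measures}, \((\overline{\ev})_*\eta_n=\nu_n\). Hence
\[
\int f\,d\nu_n=\int f\circ\overline{\ev}\,d\eta_n\longrightarrow\int f\circ\overline{\ev}\,d\mu^0=\int f\,d\nu .
\]
Thus \(\nu_n\rightharpoonup\nu\). Since \(\nu\) is a weak limit of \((\nu_n)\), it is a Patterson--Sullivan measure by Definition~\ref{def:critical-counting-ps-measure}. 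It is a probability measure because \(\mu^0\) is.

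\textbf{Support.} We need \(\nu(\partial_B(G,\mathcal H))=1\). Since \(\mu^0\) (extended by zero) gives \(\mathcal P^0\) measure zero, it suffices to show \(\overline{\ev}(\mathcal P_\infty^0)\subseteq\partial_B(G,\mathcal H)\). By Lemma~\ref{lem:floyd-tail-estimate}, for \(\sigma\in\mathcal P_\infty^0\) the point \(\overline{\ev}_F(\sigma)\) lies in \(\partial_FG\). By Theorem~\ref{thm:floyd-map}, \(\overline\iota\) maps \(\partial_FG\) into \(\partial_B(G,\mathcal H)\).

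Alternatively, one can argue directly. The prefixes \(\ev(\sigma|_n)\) are pairwise distinct with \(|\ev(\sigma|_n)|_S=n\to\infty\), so their limit in \(\overline G_B\) lies in the boundary, since points of \(G\) are isolated. The only thing to check is that \(\overline{\ev}(\sigma)=\lim_n\ev(\sigma|_n)\) in \(\overline G_B\), which holds by continuity of \(\overline{\ev}\) and \(\sigma|_n\to\sigma\).

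For completeness one may also note that \(\nu(G)=0\) directly. Each singleton \(\{g\}\) is clopen in \(\overline G_B\), so \(\nu(\{g\})=\lim\nu_n(\{g\})=\lim\lambda^{-|g|}/Z_n=0\) because \(Z_n\to\infty\).

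\textbf{Main obstacle.} Nothing here is really hard. The only delicate point is making sure the boundary containment uses the correct topology on \(\overline G_B\), namely Bowditch's topology rather than the disjoint-union topology. The isolatedness of points of \(G\), recalled above, handles this.
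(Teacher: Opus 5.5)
Your proposal is correct and follows the paper's proof: you push the weak convergence \(\eta_n\rightharpoonup\mu^0\) of Lemma~\ref{lem:lifted_measures} forward along the continuous map \(\overline{\ev}\), and you get support on the boundary from \(\overline{\ev}(\mathcal P_\infty^0)\subseteq\partial_B(G,\mathcal H)\). Your two justifications of that containment (via Lemma~\ref{lem:floyd-tail-estimate} and Theorem~\ref{thm:floyd-map}, or via isolatedness of points of \(G\)) and your direct check that \(\nu(G)=0\) fill in details the paper leaves implicit.
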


\begin{proof}
	Let \(f:\overline G_B\to\mathbb R\) be bounded and continuous.
	Since \(\overline{\ev}\) is continuous, the function \(f\circ\overline{\ev}\) is bounded and continuous on \(\overline{\mathcal P}^{\,0}\).
	Using \((\overline{\ev})_*\eta_n=\nu_n\) and \(\eta_n\rightharpoonup\mu^0\) from Lemma \ref{lem:lifted_measures}, we obtain
	\[ 
        \int_{\overline G_B}f\,d\nu_n=\int_{\overline{\mathcal P}^{\,0}}f\circ\overline{\ev}\,d\eta_n\longrightarrow\int_{\overline{\mathcal P}^{\,0}}f\circ\overline{\ev}\,d\mu^0=\int_{\overline G_B}f\,d\nu.
    \]
	Therefore \(\nu_n\rightharpoonup\nu\).
	Since \(\mu^0\) is supported on \(\mathcal P_\infty^0\) and \(\overline{\ev}(\mathcal P_\infty^0)\subseteq\partial_B(G,\mathcal H)\), the measure \(\nu\) is supported on the Bowditch boundary.
	The conclusion now follows from Definition~\ref{def:critical-counting-ps-measure}.
\end{proof}

The pushforward identity also transfers any Borel full-measure set of paths with some property to a Borel full-measure set of boundary points.
We have now identified the limit of this specific weighted counting sequence as the endpoint distribution of a random based combing path.
The next section studies ordered word frequencies for that path law.
Once those frequencies exist almost surely, the measure identity tells us that almost every boundary point, for this counting-limit measure, has at least one combing ray along which they all exist.

\section{Frequency}\label{sec:frequency}


The measure identity from the preceding section lets us interpret almost-sure statements about combing paths as statements about rays ending at almost every boundary point.
We now study word frequencies under the based Markov measure.
We first recall where a typical path eventually stays, then follow the period-restricted counting argument of Basu and Mj before treating arbitrary word lengths.

\subsection{Graph structure components}
\begin{definition}\label{def:automaton-components}
     Let \(\mathcal{G}\) be a graph structure. Two vertices of \(\mathcal G\) lie in the same \emph{component} if each is accessible from the other.
A component is \emph{recurrent} if it contains a directed cycle.
\end{definition}

The directed graph obtained by collapsing each component of \(\mathcal G\) to a vertex has no directed cycles.
Let \(M\) be the adjacency matrix of \(\mathcal G\), where \(M_{ij} =1\) if there is a directed edge from \(v_i\) to \(v_j\) and 0 otherwise. (In general, \(M_{ij}\) is the number of edges from \(v_i\) to \(v_j\), but we assume \(\mathcal G\) has no parallel edges).
For a component \(C\), let \(M_C\) be its adjacency matrix and let \(\lambda_C\) be its spectral radius.
By Perron-Frobenius, \(\lambda\) is in fact the largest eigenvalue.
Set \(\lambda=\max_C\lambda_C\), and call \(C\) \emph{maximal} if \(\lambda_C=\lambda\).

\begin{definition}\label{def:component-period}
The \emph{period} \(p(C)\) of a recurrent component \(C\) is the greatest common divisor of the lengths of all directed cycles contained in \(C\).
\end{definition}


Let \(\mathcal P (C) \subseteq \mathcal P\) and \(\mathcal P_\infty(C) \subseteq \mathcal P_\infty\) denote the collections of finite and infinite paths contained in a maximal component \(C\).

\begin{proposition}\label{cor:ev-in-maximal-comp}\cite[Corollary 3.15]{BasuMj2022}
 For \(\mu^0\)-a.e. \(\sigma\in \mathcal P^0_\infty\), there is a unique maximal component \(C\) so that \(\mathsf S^i(\sigma) \in \mathcal{P}_\infty(C)\) for all \(i\) sufficiently large.
\end{proposition}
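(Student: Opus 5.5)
Every infinite based path eventually settles in a single component: components are partially ordered by accessibility, the collapsed graph is acyclic, and there are finitely many components, so a path can only leave a component finitely many times. Its tail then stays in some component \(C\). That component must be recurrent, since an infinite path inside \(C\) eventually repeats a vertex and hence traverses a directed cycle in \(C\). Uniqueness of \(C\) is automatic because components are disjoint. So the proposition reduces to showing that the set of \(\sigma\) whose eventual component \(C\) has \(\lambda_C<\lambda\) is \(\mu^0\)-null.

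For this I use the explicit cylinder formula of Lemma~\ref{lem:based-markov-measure}, \(\mu^0([y])=\lambda^{-|y|}r_{v_{i_n}}/r_{v_0}\). Fix a non-maximal recurrent component \(C\) and a vertex \(v\in C\). The event that \(\sigma\) eventually stays in \(C\) is contained in the countable union, over based paths \(y\) ending at \(v\), of the events that \(\sigma\) passes through \(y\) and then remains in \(C\) forever. By the Markov property, the mass of the piece indexed by \(y\) is \(\mu^0([y])\) times \(\lim_{q\to\infty}\sum_{x}\lambda^{-q}r_{\mathrm{end}(x)}/r_v\), where \(x\) ranges over paths of length \(q\) from \(v\) staying in \(C\). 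Since \(r\) is bounded and the number of such \(x\) is \(\mathbf e_v^TM_C^q\mathbf 1\le K_C\,q^{d}\lambda_C^q\), this limit is bounded by \(\lim_q \|r\|_\infty q^d(\lambda_C/\lambda)^q/r_v=0\). If \(r_v=0\), then \(\mu^0([y])=0\) already. Each piece is therefore null, and so is the countable union.

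Summing over the finitely many non-maximal components gives the conclusion: for \(\mu^0\)-almost every \(\sigma\), the eventual component exists, is unique, and is maximal. The main point needing care is the Markov-property factorization. I would justify it by writing the event as a decreasing intersection of cylinders, namely paths that follow \(y\) and then stay in \(C\) for \(q\) more steps, and applying the cylinder formula directly. The decreasing sets have masses bounded by \(\lambda^{-|y|}\|r\|_\infty\,\mathbf e_v^TM_C^q\mathbf 1\,\lambda^{-q}/r_{v_0}\), so continuity from above gives measure zero without any further Markov-chain machinery.
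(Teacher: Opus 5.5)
Your proof is correct, but it does not follow the paper's route, because the paper gives no argument here. It imports the statement from \cite[Corollary~3.15]{BasuMj2022}. The standard argument behind that citation is the Calegari--Fujiwara analysis of the Markov chain \(N\). Almost semisimplicity gives two facts: no maximal component can reach another, and \(r\) vanishes at every vertex from which only non-maximal components are reachable. It follows that each maximal component is a closed irreducible class for \(N\), and each non-maximal component is transient, since its substochastic restriction has spectral radius \(\lambda_C/\lambda<1\). The result then follows because a finite chain is almost surely absorbed into a closed class.

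You bypass all of that structure. The cylinder formula of Lemma~\ref{lem:based-markov-measure}, combined with your count \(\mathbf e_v^TM_C^q\mathbf 1=O(q^{D}\lambda_C^q)\), bounds the mass of ``follow \(y\), then stay in \(C\) for \(q\) more steps'' by a constant times \((\lambda_C/\lambda)^q\) up to a polynomial factor. Continuity from above then finishes the argument.

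What each approach buys:
\begin{itemize}
\item Your version is short and self-contained. It uses only \(Mr=\lambda r\), \(r\ge 0\) and \(r_{v_0}>0\), and never touches the absorbing-row convention in \(N\).
\item The Markov-chain route costs more, but it also shows that each maximal component is closed and irreducible for \(N\). That is exactly what Lemma~\ref{lem:ordered-frequencies-exist} invokes when it treats the tail of \(\sigma\) as the chain \(N\) on \(C\).
\end{itemize}

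One small repair is needed. You fix a single \(v\in C\) and take the union over based paths ending at \(v\). A path can settle in \(C\) without ever visiting that particular \(v\), for instance by looping forever at another vertex of \(C\), so that containment fails as written. Take the union over all based paths ending at any vertex of \(C\) instead. This is still a countable union, and your decreasing-cylinder estimate applies verbatim to each piece.
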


\subsection{Markov chain trajectories}

Since a typical path eventually stays in a maximal component, we next recall a counting result for an irreducible finite Markov chain.
We count consecutive, nonoverlapping tuples, keeping the period restriction used in the mixing argument of Basu and Mj.

Let \(P\) be the transition matrix of an irreducible Markov chain on a finite state space \(\Sigma\). We do not assume that the chain is aperiodic or reversible, and we denote its period by \(d\). Fix \(k\in d\mathbb{N}\) and a word \(\mathbf{x}=(x_1,\ldots,x_k)\in\Sigma^k\). For each \(x\in\Sigma\), let \((X_m^x)_{m\geq 0}\) be a realization of the chain with \(X_0^x=x\). For \(n\geq k\), define
\[
N_n(\mathbf{x},x):=\#\left\{0\leq i\leq \left\lfloor\frac{n}{k}\right\rfloor-1:(X_{ik+1}^x,\ldots,X_{(i+1)k}^x)=\mathbf{x}\right\}.
\]
Thus \(N_n(\mathbf{x},x)\) counts the occurrences of \(\mathbf{x}\) in the consecutive, nonoverlapping tuples of length \(k\) observed up to time \(n\).

\begin{proposition}[\cite{BasuMj2022}\label{prop:counts-of-word}, Proposition 3.16]
    For each $x\in \Sigma$, the following holds almost surely. For all $k\in d\mathbb{N}$, $\mathbf{x}\in \Sigma^k$, and $\varepsilon>0$, there exist $f(\mathbf{x},x) \geq 0$ (non-random), and $N_0$ (random but finite, depending on $k,\mathbf{x},\varepsilon$) such that for all $n\geq N_0$
    \[
    \frac{N_n(\mathbf{x},x)}{n} \in (f(\mathbf{x},x) - \varepsilon, f(\mathbf{x},x) + \varepsilon,)
    \]
    i.e.
    \[
    \frac{N_n(\mathbf{x},x)}{n} \to f(\mathbf{x},x)
    \]
    almost surely.
\end{proposition}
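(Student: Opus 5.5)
The plan is to reduce the statement to the strong law of large numbers for an auxiliary Markov chain whose states are the \(k\)-blocks. Fix \(x\in\Sigma\) and \(k\in d\mathbb N\).

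\textbf{Block chain.} Define the block process
\[
Y_i:=(X^x_{ik+1},\ldots,X^x_{(i+1)k}),\qquad i\geq 0.
\]
By the Markov property, \((Y_i)\) is a Markov chain on the finite set \(\Sigma^k\) of admissible \(k\)-paths. Its transition law depends only on the last coordinate of the current block. The counting function then becomes
\[
N_n(\mathbf x,x)=\sum_{i=0}^{\lfloor n/k\rfloor-1}\mathbf 1\{Y_i=\mathbf x\}.
\]

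\textbf{Reduction to the sampled chain.} Consider the chain \(Z_i:=X^x_{ik}\), with transition matrix \(P^k\). Because \(d\mid k\) and \(P\) is irreducible with period \(d\), the state space splits into the cyclic classes \(\Sigma_0,\ldots,\Sigma_{d-1}\). Each \(\Sigma_j\) is closed for \(P^k\), and the restriction of \(P^k\) to \(\Sigma_j\) is irreducible and aperiodic. Starting from \(x\in\Sigma_{j(x)}\), the chain \(Z_i\) lives in \(\Sigma_{j(x)}\) forever. It therefore has a unique stationary law \(\pi^{(x)}\) there, namely the normalized restriction of the stationary vector of \(P\).

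\textbf{Ergodic theorem and the limit.} Since \(Y_i\) is a function of \(Z_i\) together with the next \(k\) steps, it is enough to apply the ergodic theorem for finite irreducible Markov chains, which needs no aperiodicity. Apply it either to the pair chain or directly to the block chain restricted to its communicating class reachable from \(x\). This gives, almost surely,
\[
\frac{1}{m}\sum_{i<m}\mathbf 1\{Y_i=\mathbf x\}\to \pi^{(x)}(x_0')\,P(x_0',x_1)\cdots P(x_{k-1},x_k),
\]
where \(x_0'\) ranges over \(\Sigma\) and the expression is summed over the state preceding \(x_1\). Write this limit as \(g(\mathbf x,x)\). Taking \(m=\lfloor n/k\rfloor\), so that \(m/n\to 1/k\), yields \(N_n/n\to f(\mathbf x,x):=g(\mathbf x,x)/k\).

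Concretely, I would carry out the ergodic step by regarding \(W_i:=(Z_i,Y_i)\) as a chain on a finite space. From a given \(z\), the block \(Y_i\) is drawn independently of the past, and \(Z_{i+1}\) is the last letter of \(Y_i\). The communicating class of \(W\) containing the start is closed and irreducible, since \(Z\) is irreducible on \(\Sigma_{j(x)}\) and every admissible block has positive probability. The finite-state ergodic theorem therefore applies.

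\textbf{Uniformity over \(k\), \(\mathbf x\), \(\varepsilon\).} The statement requires one almost-sure event that works simultaneously for all \(k\in d\mathbb N\), all \(\mathbf x\), and all \(\varepsilon\). There are countably many pairs \((k,\mathbf x)\), each with a full-measure convergence event, so their intersection still has full measure. Convergence on that intersection gives, for each \(\varepsilon\), a finite random \(N_0\).

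\textbf{Main obstacle.} The delicate point is verifying irreducibility of the chain actually used, without assuming aperiodicity. This is exactly where \(d\mid k\) enters: it guarantees that the start \(x\) fixes a single cyclic class \(\Sigma_{j(x)}\), on which \(P^k\) is irreducible. This in turn makes the limit \(f(\mathbf x,x)\) deterministic, although it may depend on \(x\).
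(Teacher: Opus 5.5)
Your proof is correct, but it follows a different route from the one the paper attributes to Basu and Mj \cite[Appendix~A]{BasuMj2022}.

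\textbf{The Basu--Mj route.} There, $d\mid k$ is used to make the tuple process \emph{aperiodic} on its recurrent classes, hence mixing, and the frequency limit is extracted from mixing.

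\textbf{Your route.} You pass to the chain $W_i=(X^x_{ik},\ldots,X^x_{(i+1)k})$ of admissible $(k+1)$-paths and check that it lives on one closed irreducible class. You then invoke the ergodic theorem for irreducible finite chains, which needs no aperiodicity. The limit comes from the product-form stationary law $\pi_W(z,\mathbf y)=\pi^{(x)}(z)P(z,y_1)\cdots P(y_{k-1},y_k)$. State explicitly the one-line check that this law is $W$-stationary, which follows from $\pi^{(x)}P^k=\pi^{(x)}$. This is essentially the mechanism the paper itself uses later in Theorem~\ref{thm:arbitrary-length-ordered-frequencies}.

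\textbf{The role of $d\mid k$ is overstated.} Your closing paragraph says divisibility is where irreducibility comes from, but your argument does not need $d\mid k$ at all. Write $g=\gcd(k,d)$. For arbitrary $k$, the matrix $P^k$ splits $\Sigma$ into $g$ closed classes, namely the unions of those $\Sigma_j$ with $j$ in a fixed residue class mod $g$. Each class is irreducible, with period $d/g$. The start $x$ still selects one of these classes, so $W$ is still irreducible on a closed class and the limit $f(\mathbf x,x)$ is still deterministic. Divisibility is needed only for aperiodicity.

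\textbf{What each route buys.} Yours is more elementary and extends verbatim to all word lengths. The mixing route, at the price of requiring $d\mid k$, gives more: the law of the block $Y_i$ itself converges, not merely its Ces\`aro averages.
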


Basu and Mj give a proof in \cite[Appendix A]{BasuMj2022}.
They use the fact that when the tuple length is divisible by the period of the original chain, the corresponding tuple process is aperiodic, and hence mixing, on each of its recurrent components.

\subsection{Ordered frequency}


We now apply the tuple counts to the words spelled by a combing path.
For a fixed word, we inspect consecutive subpaths of the same length and count those whose labels spell that word.
The definition makes sense for every positive length; the first existence proof below uses the period restriction.

We now move to the notion of ordered frequency. We briefly recall our setting.  Let \((G,\mathcal H)\) be a non-elementary relatively hyperbolic pair, with admissible finite symmetric generating set \(S\), and let \(\mathcal G\) be a geodesic combing of \((G,S)\) with initial vertex \(v_0\).



Let \(\mathcal{C}_{\max}\) denote the collection of maximal components of \(\mathcal G\). For each \(C\in\mathcal{C}_{\max}\), let \(d_C\) denote the period of the irreducible Markov chain supported on \(C\), and set
\[
d:=\operatorname{lcm}\{d_C:C\in\mathcal{C}_{\max}\}.
\]
By Proposition~\ref{cor:ev-in-maximal-comp}, for \(\mu^0\)-almost every \(\sigma\in\mathcal{P}_\infty^0\), there exists a unique maximal component \(C\) such that \(\sigma\) eventually remains in \(\mathcal{P}_\infty(C)\). Let \(\mathcal{P}_\infty^{\max}\subseteq\mathcal{P}_\infty^0\) denote the collection of all such infinite paths.

Fix \(n\in d\mathbb{N}\), let \(w=s_1s_2\cdots s_n\) be a geodesic word of length \(n\), and let \(\sigma\in\mathcal{P}_\infty^{\max}\). For each \(i\geq 0\), let \(y_i\) denote the prefix of \(\sigma\) of length \(in\). In particular, \(y_0\) is the empty path and \(\ev(y_0)=\id\). Since \(\sigma\) is geodesic,
\[
d_S\bigl(\ev(y_i),\ev(y_{i+1})\bigr)=n
\]
for every \(i\geq 0\). We define the ordered frequency of \(w\) along \(\sigma\) by counting those segments from \(y_i\) to \(y_{i+1}\) whose ordered edge labels form the word \(w\). We now give a precise definition.

\begin{definition}[Ordered frequency]
Let \(\sigma\in\mathcal P_\infty^0\), let \(w=s_1\cdots s_k\) be a geodesic word of length \(k\), and, for each \(j\geq 0\), let \(y_j\) be the prefix of \(\sigma\) of length \(jk\). We say that \(w\) has an \emph{ordered occurrence at \(j\)} if the ordered labels on the subpath from \(y_j\) to \(y_{j+1}\) form the \(k\)-tuple \((s_1,\ldots,s_k)\).
Equivalently, if \(z_j\) is the unique length-\(k\) subpath satisfying \(y_{j+1}=y_jz_j\), then the ordered labels of \(z_j\) form \(w\).
For \(m\geq 1\), define
\[
n_w(\sigma;m)
:=
\#\left\{0\leq j<m:\text{\(w\) has an ordered occurrence at \(j\)}\right\}.
\]
The \emph{ordered frequency} of \(w\) along \(\sigma\) is
\[
f_w(\sigma)
:=
\lim_{m\to\infty}\frac{n_w(\sigma;m)}{m},
\]
provided the limit exists.
\end{definition}

\begin{lemma}[Ordered frequencies exist]\label{lem:ordered-frequencies-exist}
For \(d\) and \(\mathcal P_\infty^{\max}\) as above, for \(\mu^0\)-almost every \(\sigma\in\mathcal P_\infty^{\max}\), the ordered frequency \(f_w(\sigma)\) exists for every geodesic word \(w\) whose length is a positive multiple of \(d\).
\end{lemma}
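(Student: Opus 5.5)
We will reduce the statement to Proposition~\ref{prop:counts-of-word}, applied to the irreducible Markov chain obtained by restricting the transition matrix \(N\) to each maximal component.

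\textbf{Conditioning on the entry time and state.} By Proposition~\ref{cor:ev-in-maximal-comp}, \(\mathcal P_\infty^{\max}\) has full \(\mu^0\)-measure. We decompose \(\mathcal P_\infty^{\max}\) into countably many Borel pieces \(E_{C,t,x}\). Here \(E_{C,t,x}\) consists of the paths that remain in the maximal component \(C\) from time \(t\) on and whose vertex at time \(t\) is \(x\in C\). Since a countable union of null sets is null, it suffices to prove the claim almost surely on each \(E_{C,t,x}\) of positive measure. Because \(\mu^0\) is a Markov measure with transition matrix \(N\), the Markov property applies at the deterministic time \(t\). Conditioned on the prefix up to time \(t\) ending at \(x\) and on the event of never leaving \(C\), the tail \(\mathsf S^t\sigma\) is a Markov chain started at \(x\). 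Its kernel is \(N\) conditioned to stay in \(C\).

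\textbf{Identifying the tail chain.} For a maximal component, \(r\) restricted to \(C\) should be a Perron eigenvector of \(M_C\). Granting this, \(N\) restricted to \(C\) is already stochastic, so staying in \(C\) is automatic once the chain is in \(C\). Making this precise is the main obstacle: we must show \(\sum_{j\in C} M_{ij}r_j=\lambda r_i\) for \(i\in C\). This means that on the event of remaining in \(C\), the law of the tail is exactly the irreducible chain \(P=N|_C\), with period \(d_C\).

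If this identity fails, we will instead observe that the conditioning is a Doob \(h\)-transform. Here \(h(i)\) is the probability of staying in \(C\) forever, which is harmonic for the substochastic kernel \(N|_C\). The conditioned kernel \(N_{ij}h(j)/h(i)\) is again irreducible on \(C\) with the same period, so Proposition~\ref{prop:counts-of-word} still applies to it.

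\textbf{Translating counts into frequencies.} Fix a word length \(n=md\), so that \(d_C\mid n\). Proposition~\ref{prop:counts-of-word} gives, almost surely and simultaneously for all \(n\in d_C\mathbb N\) and all vertex tuples, the existence of \(\lim N_L(\mathbf x,x)/L\). There are only countably many tuples and lengths, which is why the simultaneity is available. The path has no parallel edges (Lemma~\ref{lem:parallel-free}), so each length-\(n\) subpath is determined by its vertex tuple. Hence an ordered occurrence of \(w\) at block \(j\) is a finite union over the vertex tuples \(\mathbf x\) whose induced edge labels spell \(w\). Here we use the tuple together with its preceding vertex, which is handled by counting \((n+1)\)-tuples or by noting that labels are determined by consecutive vertices.

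Two discrepancies remain. The first is the time shift \(t\), which changes block boundaries unless \(n\mid t\). We handle this by splitting further by the residue \(t \bmod n\) and applying the proposition to the chain started at time \(t'=n\lceil t/n\rceil\), conditioning at that state instead. The block alignment is then exact, and the finitely many initial blocks do not affect the limit. The second is normalization: \(n_w(\sigma;m)/m\) equals \(n\) times the proposition's \(N_{mn}/(mn)\), so the limit exists.

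Finally, we take a countable intersection over all \(w\) whose length is a multiple of \(d\). The only delicate point beyond the identification of the conditioned chain is the bookkeeping of block alignment and of edge labels versus vertex tuples.
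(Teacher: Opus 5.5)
Your proposal is correct and follows essentially the paper's argument: eventual confinement in a maximal component $C$ via Proposition~\ref{cor:ev-in-maximal-comp}, the Markov property at a block-aligned time at which the path lies in $C$, Proposition~\ref{prop:counts-of-word} for the irreducible chain on $C$ (whose period $d_C$ divides the word length), and discarding finitely many initial blocks. Your bookkeeping for the countable decomposition, block alignment, and normalization is more careful than the paper's own write-up.

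The identity you flag as the main obstacle does hold. Almost semisimplicity forbids a path from one maximal component to another, so $r_j=0$ at every vertex outside $C$ that is reachable from $C$, and hence $N|_C$ is already stochastic. This is exactly what the paper tacitly uses when it says the tail has the law of the chain induced by $N$ on $C$, so your Doob $h$-transform fallback is valid but unnecessary. The edge-label issue is handled most cleanly by applying Proposition~\ref{prop:counts-of-word} to the chain on edges of $C$, whose length-$k$ blocks are exactly the subpaths $z_j$.
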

\begin{proof}
By Proposition~\ref{cor:ev-in-maximal-comp}, for \(\mu^0\)-almost every \(\sigma\in\mathcal P_\infty^0\), there is a unique maximal component \(C\) and an integer \(J\geq 0\) such that every vertex of \(\sigma\) after time \(J\) lies in \(C\).
Fix such a path \(\sigma\) , and let \(\mathsout{w}\) be a geodesic word of length \(k\in d\mathbb N\).

Choose \(j_0\) so that \({j_0k\geq J}\); then the subpath of \(\sigma\) beginning at \(y_{j_0}\) lies entirely in \(C\).
Write \(\sigma=(v_0,v_1,v_2,\ldots)\), so that the prefix \(y_{j_0}\) terminates at \(v_{j_0k}\). Conditional on the cylinder \([y_{j_0}]\), the sequence of automaton vertices in the tail of \(\sigma\) has the law of the Markov chain induced by \(N\) on \(C\), started at \(v_{j_0k}\).
This chain is irreducible and has period \(d_C\), and \(k\) is a multiple of \(d_C\) because \(d_C\) divides \(d\).






For each \(j\geq j_0\), let \(z_j\) be the unique length-\(k\) subpath satisfying \(y_{j+1}=y_jz_j\). Since the tail of \(\sigma\) after \(y_{j_0}\) lies in \(C\), each \(z_j\) is contained in \(C\). The sequence
\[
(z_{j_0},z_{j_0+1},z_{j_0+2},\ldots)
\]
	is the trajectory of the finite Markov chain of consecutive length-\(k\) tuples in \(\mathsout{C}\).
    Since \(k\) is divisible by \(d_C\), Proposition~\mbox{\ref{prop:counts-of-word}} therefore implies that
\[
\lim_{m\to\infty}
\frac{n_w(\sigma;j_0+m)-n_w(\sigma;j_0)}{m}
\]
exists almost surely. For every \(m\geq 1\),
\[
\frac{n_w(\sigma;j_0+m)}{j_0+m}
=
\frac{m}{j_0+m}
\frac{n_w(\sigma;j_0+m)-n_w(\sigma;j_0)}{m}
+
\frac{n_w(\sigma;j_0)}{j_0+m}.
\]
The first factor tends to one, the middle quotient has an almost-sure limit, and the final term tends to zero as \(m\to\infty\). Therefore,
\[
\lim_{m\to\infty}\frac{n_w(\sigma;j_0+m)}{j_0+m}
=
f_w(\sigma)
\]
exists almost surely.
\end{proof}

Let \(\mathcal P'_\infty\subset\mathcal P^0_\infty\) be the \(\mu^0\)-full measure subset satisfying Lemma~\ref{lem:ordered-frequencies-exist}, and let \(\partial_B(G,\mathcal H)':=\overline{\ev}(\mathcal P'_\infty)\).
Since \(\nu=(\overline{\ev})_*\mu^0\), this set has full \(\nu\)-measure.
We therefore obtain the following corollary.

\begin{corollary}\label{cor:boundary_frequencies}
	For \(\nu\)-almost every \(\xi\in\partial_B(G,\mathcal H)\), there exists a combing ray \(\sigma\in\mathcal P'_\infty\) ending at \(\xi\). 
\end{corollary}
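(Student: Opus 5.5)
The plan is to transfer the full-measure statement on paths to the boundary through the pushforward identity \(\nu=(\overline{\ev})_*\mu^0\) of Theorem~\ref{thm:critical-counting-convergence}.

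First, I would fix measurability. The set \(\mathcal P'_\infty\) produced by Lemma~\ref{lem:ordered-frequencies-exist} should be taken Borel. For each of the countably many geodesic words \(w\) with length in \(d\mathbb N\), the function \(\sigma\mapsto n_w(\sigma;m)/m\) depends only on a finite prefix, so it is locally constant on \(\mathcal P^0_\infty\). The set on which its limit exists is therefore Borel, being defined by \(\liminf=\limsup\). Intersecting over the countably many \(w\) and with \(\mathcal P^{\max}_\infty\) gives a Borel set of full \(\mu^0\)-measure. If needed, I would replace \(\mathcal P'_\infty\) by this Borel set.

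Second, I would avoid asking whether the image \(\overline{\ev}(\mathcal P'_\infty)\) is measurable. The image of a Borel set under a continuous map need not be Borel; it is only analytic. Instead I would work with the complement. Set \(E:=\partial_B(G,\mathcal H)\setminus\overline{\ev}(\mathcal P'_\infty)\). Then \(\overline{\ev}^{-1}(E)\cap\mathcal P^0_\infty\) is contained in \(\mathcal P^0_\infty\setminus\mathcal P'_\infty\), which is \(\mu^0\)-null. To make this rigorous, I would use that \(\mathcal P'_\infty\) is \(\sigma\)-compact up to a null set. By inner regularity of \(\mu^0\) on the compact metrizable space \(\overline{\mathcal P}^{\,0}\), choose compact sets \(K_j\subseteq\mathcal P'_\infty\) with \(\mu^0(K_j)\to1\). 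Each image \(\overline{\ev}(K_j)\) is compact, hence Borel, and lies inside \(\overline{\ev}(\mathcal P'_\infty)\). Then
\[
\nu\bigl(\overline{\ev}(K_j)\bigr)=\mu^0\bigl(\overline{\ev}^{-1}(\overline{\ev}(K_j))\bigr)\geq\mu^0(K_j)\to1.
\]
So the Borel set \(\bigcup_j\overline{\ev}(K_j)\) has full \(\nu\)-measure and is contained in \(\overline{\ev}(\mathcal P'_\infty)\).

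Finally, fix \(\xi\) in that union. By construction there is some \(\sigma\in K_j\subseteq\mathcal P'_\infty\) with \(\overline{\ev}(\sigma)=\xi\). Such a \(\sigma\) is an infinite based path, and its prefixes label a geodesic ray from \(\id\) whose endpoint in \(\overline G_B\) is \(\xi\), by Lemma~\ref{lem:compactified-bowditch-evaluation}. This gives the desired combing ray.

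The only real obstacle is the measurability of the image set, and the inner-regularity argument above resolves it. The remaining steps are routine bookkeeping.
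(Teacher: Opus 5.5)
Your proof is correct and follows the paper's route: the paper observes that \(\overline{\ev}(\mathcal P'_\infty)\) has full \(\nu\)-measure because \(\nu=(\overline{\ev})_*\mu^0\) and \(\mathcal P'_\infty\) has full \(\mu^0\)-measure. Your inner-regularity step, choosing compact \(K_j\subseteq\mathcal P'_\infty\) whose images are compact and hence Borel, rigorously handles the measurability of the image, which the paper leaves implicit. Alternatively, one can note that \(\overline{\ev}(\mathcal P'_\infty)\) is analytic and hence measurable for the completion of \(\nu\).
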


\subsection{Ordered frequency without mixing}


The preceding corollary gives the boundary conclusion for the word lengths covered by the mixing argument.
We have kept that argument to make the comparison with Basu and Mj explicit.
We now remove the length restriction on the path space; the same measure identity will then give the boundary conclusion for all word lengths.

Basu and Mj observe in \cite[Remark 3.19]{BasuMj2022} that ordered frequencies can be defined without the divisibility assumption.
They impose this assumption to obtain mixing for the associated tuple process.
For the existence of ordered frequencies, however, mixing is unnecessary: the finite state Markov chain ergodic theorem applied to a closed irreducible component eventually entered by the finite tuple chain suffices.
Thus, we can show ordered frequencies exist almost surely for geodesic words of arbitrary length.

\begin{theorem}[Ordered frequencies for arbitrary word lengths]\label{thm:arbitrary-length-ordered-frequencies}
	\label{thm:intro-ordered-frequency}
	For \(\mu^0\)-almost every \(\sigma\in\mathcal P_\infty^0\), the ordered frequency \(f_w(\sigma)\) exists for every nonempty geodesic word \(w\).
\end{theorem}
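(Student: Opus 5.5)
Fix a nonempty geodesic word \(w\) of length \(k\geq 1\). There are only countably many such words, so it suffices to show that for each fixed \(w\) the set of \(\sigma\) along which \(f_w(\sigma)\) exists has full \(\mu^0\)-measure. The theorem then follows by intersecting countably many full-measure sets. Since \(\mathcal G\) has no parallel edges and distinct outgoing edges carry distinct labels, the event ``\(w\) has an ordered occurrence at \(j\)'' depends only on the block of vertices
\[
Z_j:=(v_{jk},v_{jk+1},\ldots,v_{(j+1)k})
\]
of \(\sigma=(v_0,v_1,\ldots)\). Thus \(n_w(\sigma;m)=\sum_{j<m}\mathbf 1_{A_w}(Z_j)\) for a fixed set \(A_w\) of admissible \((k+1)\)-vertex blocks.

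\textbf{Step 1.} Under \(\mu^0\), the vertex sequence is a time-homogeneous Markov chain with transition matrix \(N\), started at \(v_0\). Hence \((Z_j)_{j\geq0}\) is itself a time-homogeneous Markov chain on the finite set \(\mathcal B_k\) of admissible length-\(k\) paths. Each block determines the distribution of the next block through its terminal vertex: the transition from \(z\) to \(z'\) has probability \(\prod N\) along \(z'\) if \(z'\) starts where \(z\) ends, and probability zero otherwise.

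\textbf{Step 2.} I will use the standard structure theory of finite Markov chains. Almost surely, a finite-state chain eventually enters some closed communicating class of its transition graph and stays there forever. Here the chain is \((Z_j)\) with transitions of positive probability, and transient states are visited only finitely often. So, almost surely, there is a random time \(j_0\) and a closed irreducible class \(R\subseteq\mathcal B_k\) with \(Z_j\in R\) for all \(j\geq j_0\). By Proposition~\ref{cor:ev-in-maximal-comp} one may even arrange that \(R\) consists of blocks inside a maximal component, but this is not needed.

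\textbf{Step 3.} Condition on the finitely many possibilities for \(R\) and for the state \(Z_{j_0}\). For this I would use the strong Markov property at the hitting time of \(R\), or equivalently decompose according to the first entrance state, which gives countably many events. Each \(R\), being closed, finite and irreducible, carries a unique stationary distribution \(\pi_R\). The ergodic theorem for irreducible finite Markov chains, which needs no aperiodicity, gives almost surely from any initial state in \(R\)
\[
\frac1m\sum_{j<m}\mathbf 1_{A_w}(Z_{j_0+j})\longrightarrow \pi_R(A_w).
\]
One route is the renewal/regeneration argument via i.i.d.\ excursions between returns to a fixed state. The initial segment \(n_w(\sigma;j_0)\) is finite, so the same splitting used in the proof of Lemma~\ref{lem:ordered-frequencies-exist} shows that \(n_w(\sigma;m)/m\to\pi_R(A_w)\). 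Hence \(f_w(\sigma)\) exists, although it is random through \(R\).

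\textbf{Main obstacle.} The main obstacle is Step 3: making the passage from ``eventually absorbed in a random class at a random time'' to the ergodic theorem rigorous. I would handle it by writing the full-measure set as a union over entrance states \(z\) and entrance times \(t\). On each event \(\{Z_t=z,\ Z_j\notin R \text{ for } j<t\}\), the Markov property makes \((Z_{t+j})_j\) a chain started at \(z\in R\), to which the ergodic theorem applies. The remaining events, where the chain is never absorbed, have probability zero.
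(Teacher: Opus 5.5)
Your proposal is correct and follows the paper's proof essentially step for step: the chain of consecutive length-\(k\) blocks, almost-sure eventual absorption into a closed irreducible class, the finite-state ergodic theorem (no aperiodicity needed), discarding the finite initial segment, and a countable intersection over words. Your decomposition by entrance time and entrance state just spells out the step the paper compresses into one sentence, namely that the ergodic theorem ``applies from every possible entrance state and time.''
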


\begin{proof}
    By Lemma~\ref{lem:based-markov-measure}, a path chosen according to \(\mu^0\) starts at \(v_0\) and moves from \(v_i\) to \(v_j\) with probability \(N_{ij}\), independently of its earlier vertices.
	Fix \(k\geq1\), let \(y_j\) be the prefix of length \(jk\), and let \(z_j\) be the length-\(k\) subpath satisfying \(y_{j+1}=y_jz_j\).
    This gives rise to a finite state tuple process.
    The possible states of the tuple process are the length-\(k\) paths that occur with positive probability along a path chosen according to \(\mu^0\).
	These consecutive tuples form a Markov chain because the conditional distribution of the next tuple depends only on the last vertex of the current tuple, and hence only on the current tuple.
	By the finiteness of the chain, almost surely, this tuple chain eventually remains in a closed irreducible component.
	The finite state Markov chain ergodic theorem~\cite[Theorem~10.2]{Weber2011MarkovChains} gives convergence of every empirical tuple proportion to its stationary probability in its respective closed irreducible component.
	This applies from every possible entrance state and time, and the random but finitely many preceding tuples, which may lie outside the eventual closed irreducible component, do not affect the limiting proportions.
	For a word \(w\) of length \(k\), the quotient \(n_w(\sigma;m)/m\) is the sum of the empirical proportions of the finitely many tuples whose ordered edge labels form \(w\).
	Consequently, the limit
	\[
		\lim_{m\to\infty}\frac{n_w(\sigma;m)}{m}=f_w(\sigma)
	\]
	exists almost surely.
	Taking the countable intersection of the full-measure sets for all nonempty geodesic words proves that the ordered frequency of every nonempty geodesic word must exist almost surely.
\end{proof}

    Let \(\mathcal P''_\infty\subset\mathcal P^0_\infty\) be the \(\mu^0\)-full measure subset satisfying Theorem~\ref{thm:arbitrary-length-ordered-frequencies}, and let \(\partial_B(G,\mathcal H)'':=\overline{\ev}(\mathcal P''_\infty)\).
    Since \(\nu=(\overline{\ev})_*\mu^0\), this set has full \(\nu\)-measure.
    We therefore obtain the following corollary.

\begin{corollary}
	\label{cor:arbitrary-length-boundary-frequencies}
	For \(\nu\)-almost every \(\xi\in\partial_B(G,\mathcal H)\), there exists a combing ray \(\sigma\in\mathcal P^0_\infty\) ending at \(\xi\) along which the ordered frequency exists for every nonempty geodesic word.
\end{corollary}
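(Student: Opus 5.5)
The corollary is the pushforward of Theorem~\ref{thm:arbitrary-length-ordered-frequencies} along the extended evaluation map $\overline{\ev}$. Let $\mathcal P''_\infty\subseteq\mathcal P^0_\infty$ be the full $\mu^0$-measure set from that theorem, on which $f_w(\sigma)$ exists for every nonempty geodesic word $w$. We need to show that its image $\overline{\ev}(\mathcal P''_\infty)$ contains a Borel set of full $\nu$-measure. Every $\xi$ in that image is, by construction, the endpoint $\overline{\ev}(\sigma)$ of some combing ray $\sigma\in\mathcal P''_\infty$, and this is exactly the claimed conclusion.

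The first step is to check that $\mathcal P''_\infty$ can be taken Borel, or to replace it by a Borel subset of full measure. For a fixed $w$ of length $k$, each $n_w(\sigma;m)$ depends only on the prefix of length $mk$. It is therefore locally constant, hence continuous, on $\mathcal P^0_\infty$. The set where $\lim_m n_w(\sigma;m)/m$ exists is then Borel, since it is a $\limsup=\liminf$ condition on countably many continuous functions. There are countably many geodesic words, so the intersection over all $w$ is Borel and, by the theorem, has full $\mu^0$-measure.

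The second step is the measure transfer. The image of a Borel set under a continuous map need not be Borel, so I will not work with $\overline{\ev}(\mathcal P''_\infty)$ directly. Instead I use compactness. By inner regularity of $\mu^0$ on the compact metrizable space $\mathcal P^0_\infty$, choose compact sets $K_j\subseteq\mathcal P''_\infty$ with $\mu^0(K_j)>1-1/j$. Each $\overline{\ev}(K_j)$ is compact, hence Borel, in $\overline G_B$. Put $E:=\bigcup_j\overline{\ev}(K_j)$. Since $K_j\subseteq\overline{\ev}^{-1}(\overline{\ev}(K_j))$, we get
\[
\nu(E)\geq\nu(\overline{\ev}(K_j))=\mu^0\bigl(\overline{\ev}^{-1}(\overline{\ev}(K_j))\bigr)\geq\mu^0(K_j)>1-1/j
\]
for every $j$, using $\nu=(\overline{\ev})_*\mu^0$ from Theorem~\ref{thm:critical-counting-convergence}. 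Hence $\nu(E)=1$.

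Finally, Theorem~\ref{thm:critical-counting-convergence} shows that $\nu$ is supported on $\partial_B(G,\mathcal H)$, and $\overline{\ev}$ maps $\mathcal P^0_\infty$ into the boundary. So $E\subseteq\partial_B(G,\mathcal H)$ has full $\nu$-measure, and every $\xi\in E$ equals $\overline{\ev}(\sigma)$ for some $\sigma\in\mathcal P''_\infty$. The only real obstacle is the measurability of the image, and the compact-exhaustion argument above handles it. Alternatively, one could observe that $\overline{\ev}(\mathcal P''_\infty)$ is analytic, hence universally measurable, but the compact route avoids this.
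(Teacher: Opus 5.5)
Your proof is correct and follows the paper's route. The paper takes the full-measure set \(\mathcal P''_\infty\) from Theorem~\ref{thm:arbitrary-length-ordered-frequencies} and asserts that \(\overline{\ev}(\mathcal P''_\infty)\) has full \(\nu\)-measure because \(\nu=(\overline{\ev})_*\mu^0\); your Borel check on \(\mathcal P''_\infty\) and your compact-exhaustion step make rigorous what the paper leaves implicit, namely that this image contains a Borel set of full \(\nu\)-measure.
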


\appendix
\refstepcounter{section}

\section*{Appendix}\label{app:worked-examples}

The worked example in this appendix puts the graph structure, the measure, and the ordered frequencies side by side.
For \(\mathbb Z*\mathbb Z^2\), we compute the Perron--Frobenius data and compare exact limiting frequencies with a recorded finite sample.

\begin{ainotice}
	The results of this section were achieved almost entirely through the use of agentic programming.
	We view this as perhaps the best use of AI in this manuscript.
	The currently available tools are particularly useful for implementing explicit constructions, computing examples, and producing mathematical figures.
	Here, they helped us calculate the Perron--Frobenius data, simulate geodesic rays, and compare empirical ordered frequencies with their predicted values.
	These computations illustrate the mathematical arguments rather than replace them, and responsibility for their interpretation and correctness remains with the author.
    The code for this is freely available at \url{https://github.com/DothWalrus/automaton_of_F_2/blob/main/geodesic_combing_zz2.ipynb}.
\end{ainotice}

\subsection{A geodesic combing of \(\mathbb Z*\mathbb Z^2\)}\label{subsec:example-zz2}

\paragraph{The chosen normal form language and its geometry.}
Consider the group and symmetric generating set
\[
	G=\langle a,b,c\mid bc=cb\rangle\cong\mathbb Z*\mathbb Z^2,\qquad S=\{a,a^{-1},b,b^{-1},c,c^{-1}\}.
\]
Set \(H=\langle b,c\rangle\cong\mathbb Z^2\) and take the peripheral family \(\mathcal H=\{H\}\).
The group \(G\) is hyperbolic relative to \(H\), so this gives a relatively hyperbolic pair \((G,\mathcal H)\) as in the introduction.
The reduced free-product normal form expresses each nonidentity element uniquely as an alternating sequence of nonidentity syllables from \(\langle a\rangle\) and \(\langle b,c\rangle\).
Choose the representative \(a^q\) for a syllable in the first factor, where \(q\ne0\), and \(b^m c^n\) for a syllable in the second, where \((m,n)\ne(0,0)\).
Here a negative exponent denotes the corresponding repeated inverse letter, and a zero exponent contributes no letter.
Our language consists of these alternating concatenations together with the empty word.
Uniqueness of the factor normal form and of the chosen representative in each factor gives bijective evaluation onto \(G\).
The chosen syllables have lengths \(|q|\) and \(|m|+|n|\), respectively.
Reducing any representative to its factor normal form cannot increase its length, while our representative achieves the sum of these syllable lengths, so every chosen word is geodesic.
Every prefix is again a chosen word.

\paragraph{The geodesic combing.}
Use the ordered state set
\[
	(v_0,v_a,v_{a^{-1}},v_b,v_{b^{-1}},v_c,v_{c^{-1}}).
\]
\(v_0\) is the initial state, and \(v_s\) records the last letter read.
An edge ending at \(v_s\) has label \(s\), so the destination specifies the label of every edge in the following adjacency matrix:
\[
	M=\begin{pmatrix}0&1&1&1&1&1&1\\0&1&0&1&1&1&1\\0&0&1&1&1&1&1\\0&1&1&1&0&1&1\\0&1&1&0&1&1&1\\0&1&1&0&0&1&0\\0&1&1&0&0&0&1\end{pmatrix}.
\]
Thus an \(a\)-run keeps its sign until a letter from the other factor is read, and a \(b\)-run may be followed by a \(c\)-run of either sign.
A \(c\)-run can continue with the same sign or enter the \(a\)-factor, but it cannot return directly to a \(b\)-letter.
These rules accept exactly the normal forms described above, proving regularity as well as prefix closure.

Every state is reachable from \(v_0\), and any finite path in the geodesic combing can therefore be preceded by a based path; its label is a suffix of a geodesic word and is itself geodesic.
The matrix and the destination-label convention completely specify the labeled graph in Figure~\ref{fig:example-zz2-automaton}.

\begin{figure}[htbp]
	\centering
	\includegraphics[width=\linewidth]{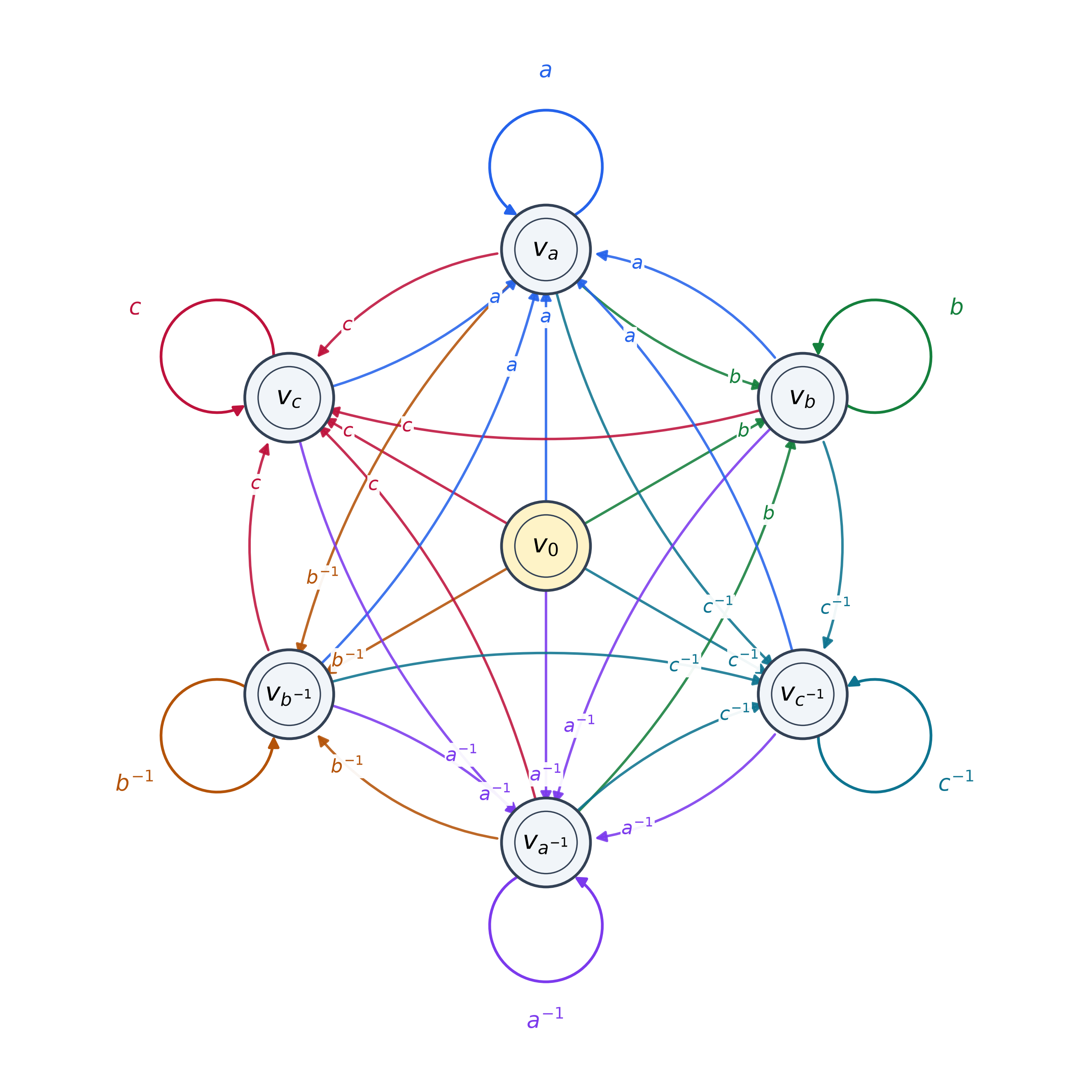}
	\caption{The seven-state geodesic combing for the chosen normal forms in \(\mathbb Z*\mathbb Z^2\).
	The six noninitial states form a single recurrent component, with a loop at each state.}
	\label{fig:example-zz2-automaton}
\end{figure}

Evaluating the based paths of this geodesic combing produces the geodesic combing rays in the Cayley graph.
Figure~\ref{fig:example-zz2-cayley} shows the resulting combing tree in a finite Cayley ball.
The order of letters matters: \(bc\) and \(cb\) represent the same element and are both geodesic, but only \(bc\) belongs to the chosen language.
In particular, ordered word frequencies distinguish spellings, not merely their evaluations in \(G\).

\begin{figure}[htbp]
	\centering
	\includegraphics[width=\linewidth]{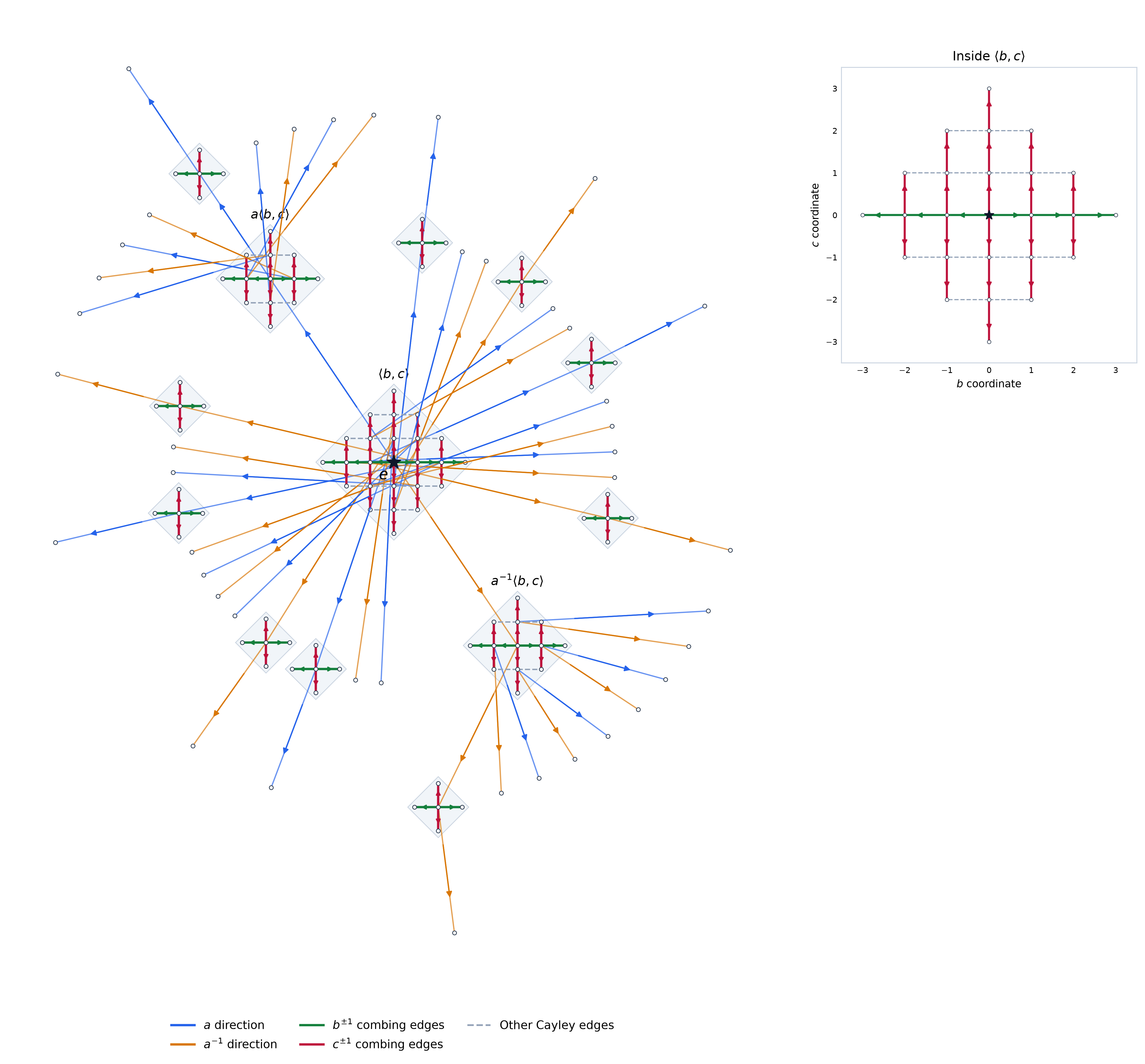}
	\caption{The radius-three Cayley ball for \(\mathbb Z*\mathbb Z^2\), centered at the identity.
	Solid colored edges carry arrows away from the identity along chosen geodesics; dashed edges are the remaining Cayley edges.
	Within each visible grid coset of \(\langle b,c\rangle\), the combing reads the \(b\)-letters before the \(c\)-letters, as illustrated by the close-up.
    }
	\label{fig:example-zz2-cayley}
\end{figure}

\paragraph{Exact Perron--Frobenius projections.}
The components are the transient singleton \(\{v_0\}\) and the terminal component \(C=\{v_a,v_{a^{-1}},v_b,v_{b^{-1}},v_c,v_{c^{-1}}\}\).
The latter is strongly connected and has loops, so its period is one.
The component quotient is the single arrow \(\{v_0\}\longrightarrow C\), and every infinite based path enters \(C\) after its first edge.
Direct calculation gives
\[
	\det(zI-M)=z(z-1)^3(z+1)(z^2-4z-1),\qquad \lambda=2+\sqrt5,\qquad \varphi=\frac{1+\sqrt5}{2}.
\]
In particular, \(\lambda\) is simple and strictly dominant, and \(C\) is the unique maximal component.
For a transparent computation of the projection, take auxiliary right and left eigenvectors
\[
	u=(2,\varphi,\varphi,\varphi,\varphi,1,1)^T,\qquad v=(0,\varphi,\varphi,1,1,\varphi,\varphi)^T.
\]
They satisfy \(Mu=\lambda u\), \(M^Tv=\lambda v\), and \(v^Tu=5+3\sqrt5\).
Consequently, \(\lambda^{-n}M^n\) converges to \(P=uv^T/(v^Tu)\), and the Ces\`aro projections of Lemma~\ref{lem:cesaro-pf-projections} are
\[
	r=P\mathbf1=\frac{5+\sqrt5}{10}(2,\varphi,\varphi,\varphi,\varphi,1,1)^T.
\]
\[
	\ell=P^T\mathbf e_0=\frac{3\sqrt5-5}{10}(0,\varphi,\varphi,1,1,\varphi,\varphi)^T.
\]
These are the projection-defined vectors, not eigenvectors rescaled to have coordinate sum one.
For example, their expanded coordinates are
\begin{align*}
	r_0&=1+\frac{\sqrt5}{5},\qquad r_c=r_{c^{-1}}=\frac{5+\sqrt5}{10},\\
	r_a=r_{a^{-1}}=r_b=r_{b^{-1}}&=\frac{5+3\sqrt5}{10}.
\end{align*}
\begin{align*}
	\ell_0&=0,\qquad \ell_b=\ell_{b^{-1}}=\frac{3\sqrt5-5}{10},\\
	\ell_a=\ell_{a^{-1}}=\ell_c=\ell_{c^{-1}}&=\frac{5-\sqrt5}{10}.
\end{align*}
Here and below, a subscript \(s\) on a vector denotes the coordinate at \(v_s\).
These values agree with the unnormalized vectors in the notebook, and satisfy \(\sum_i r_i\ell_i=r_0\), as also follows from \(P^2=P\).
Moreover, \(|\mathcal P_n^0|=\mathbf e_0^TM^n\mathbf1\sim r_0\lambda^n\), which verifies almost semisimplicity directly in this example.

\paragraph{Based and stationary probabilities.}
All coordinates of \(r\) are positive, so no artificial absorbing row is needed in \(N\).
Equation~\eqref{eq:state-transition-probability} gives the full transition matrix in the row and column order \((v_0,v_a,v_{a^{-1}},v_b,v_{b^{-1}},v_c,v_{c^{-1}})\):
\[
	N=\frac{1}{\lambda}\begin{pmatrix}0&\varphi/2&\varphi/2&\varphi/2&\varphi/2&1/2&1/2\\0&1&0&1&1&\varphi^{-1}&\varphi^{-1}\\0&0&1&1&1&\varphi^{-1}&\varphi^{-1}\\0&1&1&1&0&\varphi^{-1}&\varphi^{-1}\\0&1&1&0&1&\varphi^{-1}&\varphi^{-1}\\0&\varphi&\varphi&0&0&1&0\\0&\varphi&\varphi&0&0&0&1\end{pmatrix}.
\]
Every row sums to one because \(\lambda=2\varphi+1=3+2/\varphi\).
The stationary vertex distribution is
\[
	(\mu_0,\mu_a,\mu_{a^{-1}},\mu_b,\mu_{b^{-1}},\mu_c,\mu_{c^{-1}})=\left(0,\frac{\sqrt5}{10},\frac{\sqrt5}{10},\frac{5-\sqrt5}{20},\frac{5-\sqrt5}{20},\frac{5-\sqrt5}{20},\frac{5-\sqrt5}{20}\right).
\]
It sums to one and satisfies \(N^T(\mu_i)_i=(\mu_i)_i\), either by substitution or by using the left eigenvector equation for \(\ell\).
The stationary path law \(\mu\) starts in \(C\) with this distribution, whereas the based law \(\mu^0\) starts at \(v_0\) and uses the same transition probabilities.
For a based path \(y\) of positive length \(k\) ending at \(v_s\), telescoping the transition probabilities gives

\[
	\mu^0([y])=\frac{r_s}{\lambda^k r_0}=\begin{cases}\varphi/(2\lambda^k),&s\in\{a,a^{-1},b,b^{-1}\},\\1/(2\lambda^k),&s\in\{c,c^{-1}\}.\end{cases}
\]

In particular, under \(\mu^0\) the probability that the first letter is \(a\) is \(\varphi/(2\lambda)=(3-\sqrt5)/4\), whereas under \(\mu\) it is \(\mu_a=\sqrt5/10\).
The latter equality uses stationarity and the fact that reading \(a\) is exactly arrival at \(v_a\).
This difference is why a based cylinder probability is not, in general, the limiting ordered frequency of its label word.

\paragraph{Ordered word frequencies.}
For a nonempty word \(w=s_1\cdots s_k\), let \(p_\mu(w)\) denote the probability under the stationary path law that the next \(k\) labels spell \(w\).
For this example, the same tuple-counting formula also defines \(f_w\) for nongeodesic targets, whose counts are identically zero on every geodesic ray.
Since a label determines its destination, stationarity gives
\begin{align*}
	p_\mu(w)&=\sum_i\mu_iN_{i,s_1}\prod_{j=1}^{k-1}N_{s_j,s_{j+1}}\\
	&=\mu_{s_1}\prod_{j=1}^{k-1}N_{s_j,s_{j+1}}\\
	&=\frac{\ell_{s_1}r_{s_k}}{r_0\lambda^{k-1}}\prod_{j=1}^{k-1}M_{s_j,s_{j+1}}.
\end{align*}
The empty product for \(k=1\) is one, and indices such as \(N_{i,s}\) refer to the state \(v_s\).
The factor \(\lambda^{-(k-1)}\), rather than \(\lambda^{-k}\), appears because the initial sum already gives the stationary probability of the first destination.
For example,
\[
	p_\mu(aa)=\frac{\sqrt5}{10\lambda},\qquad p_\mu(bc)=\frac{5-\sqrt5}{20\lambda\varphi}=\frac{25-11\sqrt5}{20},\qquad p_\mu(cb)=0.
\]
Also, both transitions in \(ab^{-1}a^{-1}\) have probability \(1/\lambda\), while each continuation of a \(c\)-run has probability \(1/\lambda\), giving
\[
	p_\mu(ab^{-1}a^{-1})=\frac{\sqrt5}{10\lambda^2},\qquad p_\mu(c^4)=\frac{5-\sqrt5}{20\lambda^3}.
\]

For each fixed \(k\geq1\), consider the finite-state Markov chain of consecutive length-\(k\) path tuples in \(C\).
The restriction \(N_C\) is irreducible and aperiodic, so \(N_C^n\) has positive entries for all sufficiently large \(n\).
Choosing such a connecting length divisible by \(k\) joins the terminal vertex of any tuple to the initial vertex of any other tuple, proving that the tuple chain is irreducible.
Its stationary probabilities are the stationary path-cylinder probabilities of these tuples, and summing over the tuples spelling \(w\) gives \(p_\mu(w)\).
The finite-state Markov chain ergodic theorem used in Theorem~\ref{thm:arbitrary-length-ordered-frequencies} therefore gives convergence of the empirical tuple proportions, and hence of the ordered frequency of \(w\) to \(p_\mu(w)\), from any starting state in \(C\).
For the based law, discard the first length-\(k\) tuple: the remaining tuples start at time \(k\) in \(C\), retain their original alignment, and have the law of this chain from their entrance state.
Removing that single tuple does not change the limiting proportion.
Taking the countable intersection over finite words therefore gives
\[
	f_w(\sigma)=p_\mu(w)\qquad\text{for every nonempty word \(w\), for \(\mu^0\)-almost every \(\sigma\).}
\]
In particular, all nonempty words accepted by this geodesic combing have positive limiting frequency, but not all geodesic words in the group do: \(cb\) is geodesic and has frequency zero on every combing ray.

\paragraph{A finite empirical sample.}

A saved run of \texttt{geodesic\_combing\_zz2.ipynb} records one path of \(L=10\,000\,000\) letters, sampled from \(v_0\) using the transition probabilities from \(N\).
The targets \(a\), \(bc\), \(cb\), \(ab^{-1}a^{-1}\), and \(c^4\) are all measured on this same path.
For a target of length \(k\), it inspects the consecutive, nonoverlapping tuples at positions \(1,\ldots,k\), then \(k+1,\ldots,2k\), and so on, discarding a trailing incomplete tuple.
Thus the empirical denominator is \(\lfloor L/k\rfloor\), not \(L\), and the observed statistic is \(n_w(\sigma;\lfloor L/k\rfloor)/\lfloor L/k\rfloor\).
Table~\ref{tab:example-zz2-frequencies} compares selected saved observations with the exact stationary probabilities derived above.

\begin{table}[htbp]
	\centering
	\renewcommand{\arraystretch}{1.4}
	\begin{tabular}{c c c r r}
		\hline
		Word & Exact \(p_\mu(w)\) & Decimal & Matches/tuples & Empirical \\
		\hline
        & & &\\
		\(a\) & \(\dfrac{\sqrt5}{10}\) & \(0.223607\) & \(2\,237\,112/10\,000\,000\) & \(0.223711\) \\
		\(bc\) & \(\dfrac{5-\sqrt5}{20\lambda\varphi}\) & \(0.020163\) & \(100\,482/5\,000\,000\) & \(0.020096\) \\
		\(cb\) & \(0\) & \(0.000000\) & \(0/5\,000\,000\) & \(0.000000\) \\
		\(ab^{-1}a^{-1}\) & \(\dfrac{\sqrt5}{10\lambda^2}\) & \(0.012461\) & \(41\,638/3\,333\,333\) & \(0.012491\) \\
		\(c^4\) & \(\dfrac{5-\sqrt5}{20\lambda^3}\) & \(0.001818\) & \(4\,590/2\,500\,000\) & \(0.001836\) \\
        & & &\\
		\hline
	\end{tabular}
	\caption{Exact stationary probabilities and frequencies from the saved based sample of \(10\,000\,000\) letters.
	The decimal probabilities and empirical frequencies are rounded to six places; the displayed count ratios retain the observed integer data.}
	\label{tab:example-zz2-frequencies}
\end{table}

Figure~\ref{fig:example-zz2-convergence} displays the cumulative ordered frequencies on the same saved path, without sampling a new trajectory for any target.
For a word \(w\) of length \(k\), the curve records
\[
	\widehat f_w(m):=\frac{n_w(\sigma;m)}{m},\qquad 1\leq m\leq m_w:=\left\lfloor\frac{L}{k}\right\rfloor.
\]
The horizontal axis uses a logarithmic scale for the number \(m\) of complete, consecutive, nonoverlapping tuples, while the vertical axis is the cumulative frequency on a linear scale.
At the same horizontal coordinate, a target of length \(k\) uses the first \(mk\) letters, so different target lengths need not refer to the same path prefix.
Each curve ends at its own maximum tuple count \(m_w\), and its terminal dot is the corresponding empirical frequency in Table~\ref{tab:example-zz2-frequencies}.
The dashed horizontal lines show the exact stationary probabilities \(p_\mu(w)\), evaluated numerically for plotting.
Only the curve rendering is downsampled, to at most \(3\,000\) plotted points per target; every complete tuple is counted when computing the cumulative frequencies.

\begin{figure}[htbp]
	\centering
	\includegraphics[width=\linewidth]{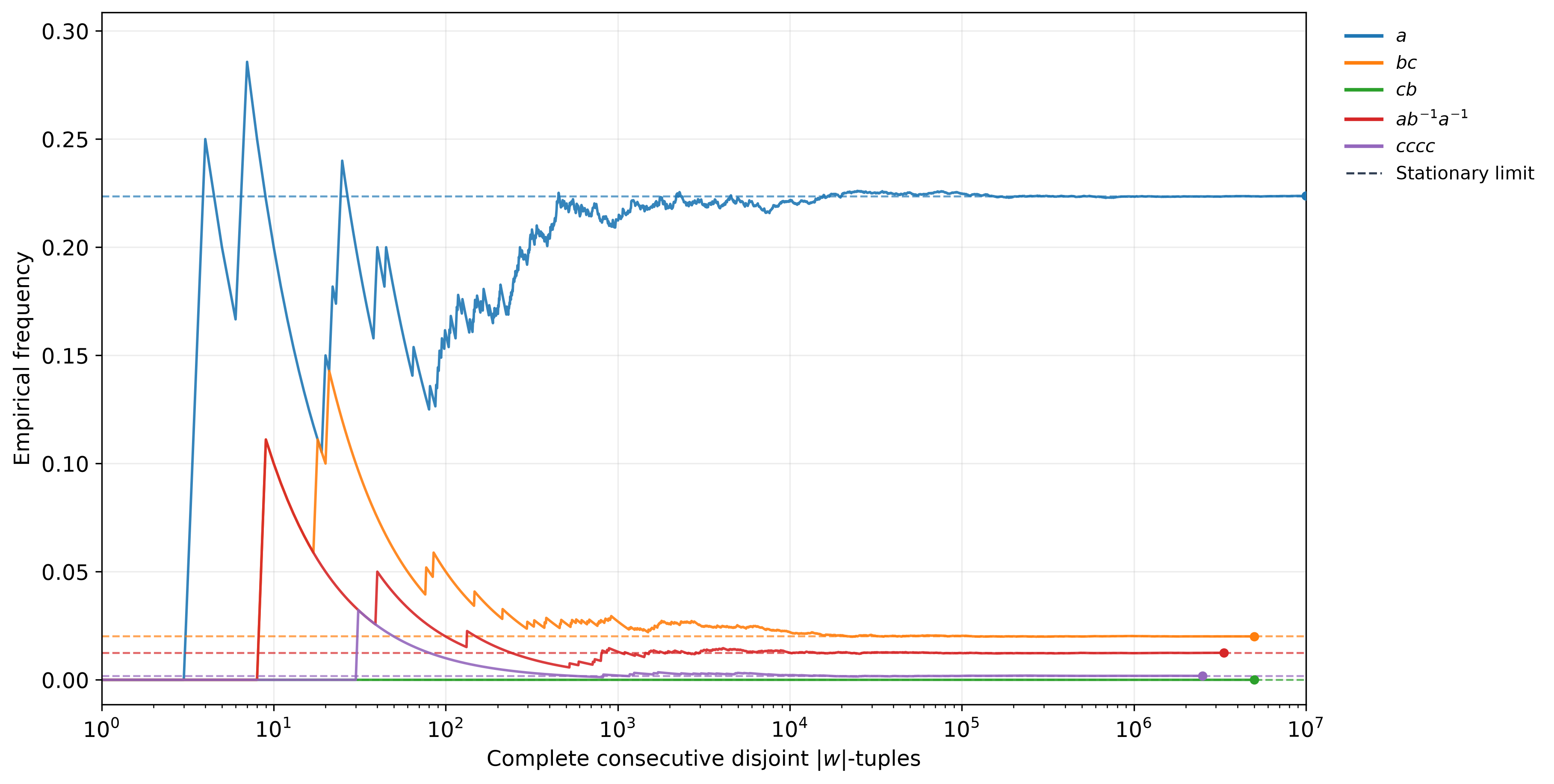}
	\caption{Cumulative ordered frequencies for the five targets on the same saved \(10\,000\,000\)-letter path.
	Solid curves show \(n_w(\sigma;m)/m\), dashed levels give \(p_\mu(w)\), and endpoint dots mark the final empirical values.
	The logarithmic horizontal axis counts complete tuples, so the curves terminate at different values \(m=\lfloor L/|w|\rfloor\).
	The drawing is downsampled, but the counts use all complete tuples.}
	\label{fig:example-zz2-convergence}
\end{figure}

These observations illustrate the limits on one finite sampled path.
The cumulative frequencies need not approach their limits monotonically, and successive tuples are not assumed independent.
The zero for \(cb\), by contrast, is forced by the missing transition rather than by sample size.

\newpage
\bibliographystyle{amsalpha}
\bibliography{references}

\end{document}